%%
% Preprint 2016a  I. Title — II. Abstract — III. Introduction
% 3rd Revision* 3 March 2023
% 2nd Revision  23 September 2020
% 1st Revision  30 September 2018
% --- Original  31 December 2017
%%
\documentclass[11pt,a4paper,fleqn]{article}
%renewcommand{\familydefault}{\sfdefault}
%renewcommand{\itshape}{\slshape}

% page layout
\textwidth=.762\paperwidth
\oddsidemargin=.\paperwidth
\textheight=48\baselineskip
\topmargin=.\paperheight
%linespread{1.5}
%raggedright

% front matter
\makeatletter
\newcommand\footnotetext@\relax
\let\footnotetext@\@footnotetext
\newcommand{\MSC}[2][2020]{%
 \unskip\protected@xdef\@thefnmark{}%
 \protect\footnotetext@{\kern-1.8em{\itshape MSC#1\spacefactor3000:}\/ #2}}
\newcommand{\keywords}[1]{%
 \unskip\protected@xdef\@thefnmark{}%
 \protect\footnotetext@{\kern-1.8em{\itshape Keywords\spacefactor3000:}\/ #1}}
\newcommand{\address}[1]{%
 \unskip\footnotemark
 \protected@xdef\@thanks{\@thanks\protect\footnotetext[\the\c@footnote]{{\itshape Address\spacefactor3000:}\/ #1}}}
\newcommand{\email}[1]{%
 \unskip\protected@xdef\@thanks{\@thanks\protect\footnotetext[0]{{\itshape Email\spacefactor3000:}\/ \texttt{#1}}}}
\renewcommand{\thanks}[1]{%
 \unskip\protected@xdef\@thanks{\@thanks\protect\footnotetext[0]{#1}}}
\gdef\@date{}
% — table of contents
\renewcommand\l@section{\@dottedtocline{1}{0em}{1.5em}}
\renewcommand\l@subsection{\@dottedtocline{2}{1.5em}{2.3em}}
\makeatother

% packages
\usepackage[english]{babel}
\usepackage[utf8]{inputenc}
\usepackage{amsmath}%
 \allowdisplaybreaks[4]
\usepackage{amssymb,amsthm}%
 \theoremstyle{plain}
  \newtheorem{thm}{Theorem}[section]
  \newtheorem*{thm*}{Theorem}
  \newtheorem{cor}[thm]{Corollary}
  \newtheorem{lem}[thm]{Lemma}
  \newtheorem*{lem*}{Lemma}
  \newtheorem{prop}[thm]{Proposition}
 \theoremstyle{definition}
  \newtheorem{defn}[thm]{Definition}
  \newtheorem{exmp}[thm]{Example}
  \newtheorem*{exmps*}{Examples}
  \newtheorem{prob}[thm]{Problem}
 \theoremstyle{remark}
  \newtheorem{rem}[thm]{Remark}
  \newtheorem{rems}[thm]{Remarks}
  \newtheorem*{rem*}{Remark}
\usepackage{mathrsfs}
\usepackage[all]{xy}%
 \entrymodifiers={!!<.em,.6ex>+}
 \SelectTips{cm}{11}
\usepackage[pdftitle={},pdfauthor={}]{hyperref}

% macros
\DeclareMathOperator{\Ad}{Ad}% — adjoint representation
\newcommand{\an}{\sharp}% — infinitesimal anchor map
\DeclareMathOperator{\Aut}{Aut}
\newcommand{\blank}{\hphantom{b}}
\newcommand{\Der}{\mathit{D}}% — derivative
\newcommand{\der}{\mathit{d}}% — exterior derivative
\DeclareMathOperator{\End}{End}
\newcommand{\freepi}{\pi^\mathrm{free}}% — homotopy group mod torsion
\newcommand{\ftimes}[2]{\mathbin{_{#1}\times_{#2}}}% — fiber product
\DeclareMathOperator{\GL}{GL}% — general linear group
\newcommand{\id}{\mathinner{\mathrm{id}}}% — identity map
\usepackage{accents}
\newcommand{\idcomp}[1]{\accentset{\circ}{#1}}% — identity component
\def\integral#1\der{\int#1\mathclose{\thinspace}\der}
\def\iintegral#1\der#2\der{\iint#1\mathclose{\thinspace}\der#2\thinspace\der}
\newcommand{\justify}[1]{\mathrel{\phantom=}#1\mathopen{\mkern\medmuskip}}
\newcommand{\Lie}{\mathfrak{L}}% — Lie algebroid bundle
\newcommand{\longto}{\longrightarrow}
\newcommand{\N}{\mathbb{N}}% — natural numbers
\newcommand{\onto}{\twoheadrightarrow}
\newcommand{\pr}{\mathinner{\mathrm{pr}}}% — projection map
\newcommand{\R}{\mathbb{R}}% — real numbers
\newcommand{\simto}{\overset\sim\to}% — bijection, isomorphism
\DeclareMathOperator{\supp}{supp}
\newcommand{\T}{\mathit{T}}% — tangent bundle
\newcommand{\tto}{\rightrightarrows}
\newcommand{\xto}{\xrightarrow}
\newcommand{\Z}{\mathbb{Z}}% — integers

\hyphenation{%
a-be-li-an Car-tan car-te-sian Ehres-mann groupoid groupoids Haus-dorff Morita Rie-mann-ian sub-groupoid sub-groupoids sub-mer-sive-ness}

\begin{document}

\title{Proper Cartan Groupoids: Reduction to the Regular Case%
  \MSC[2010]{Primary 58H05, Secondary 53C05, 53C10, 58C15}
  \keywords{Proper Lie groupoid, multiplicative connection, regular groupoid, obstruction, recursive averaging, fast convergence}
}%
\author{Giorgio Trentinaglia%
  \address{Centro de Análise Matemática, Geometria e Sistemas Dinâmicos, %
      Ins\-ti\-tu\-to Su\-pe\-ri\-or Téc\-ni\-co, University of Lisbon, %
      Av.~Ro\-vis\-co Pais, 1049-001 Lisbon, Portugal}
  \email{gtrentin@math.tecnico.ulisboa.pt}
  \thanks{Some of the results in this article were obtained %
      while the author was a guest of the Max Planck Institute for Mathematics in Bonn, Germany. %
      The author acknowledges the support of the Portuguese Foundation for Science and Technology %
      through grants SFRH\slash BPD\slash 81810\slash 2011, UID\slash MAT\slash 04459\slash 2013, and %
      UID\slash MAT\slash 04459\slash 2020.}
}%
\maketitle

\begin{abstract}
We discuss a method for constructing multiplicative connections on proper Lie groupoids or, more exactly, for reducing the task of constructing such connections to a number of in principle simpler tasks involving only Lie groupoids that are both proper and regular.
\end{abstract}

\tableofcontents

\section*{Introduction}

The notion of multiplicative connection in the theory of Lie groupoids arguably traces back to the 1950s, being already implicit in the work of C.~Ehresmann on Cartan geometries (“es\-paces à {connexion} de Cartan”) \cite{Ehr50}. It is precisely because of the historical roots of this notion in É.~Cartan's views about differential geometry \cite{Sharpe} that multiplicative connections are still nowadays referred to as \emph{Cartan connections} in spite of the fact that they are currently looked at in a much broader context than Cartan or Ehresmann originally envisioned. Back in Ehresmann's times and until not too long ago much emphasis was given to the study of Cartan parallelisms on principal bundles. From the perspective of Lie groupoid theory the latter entities correspond, more or less exactly depending on one's point of view, to multiplicative connections on transitive Lie groupoids; a concise description of essential features of this correspondence is given in the introduction of \cite{Tre8}. In more recent years the transitive case has gradually lost its central status and the focus has shifted towards the general study of \emph{Cartan groupoids}---a name by which Lie groupoids equipped with multiplicative connections are known among specialists---and of even more general multiplicative structures (distributions etc.)\ on Lie groupoids and Lie algebroids \cite{Bl06,Bl12,Bla16,CSS,CrampS,JO,Tang,Yudil}.

It is a classical problem to determine the obstructions to the existence of Cartan parallelisms on principal bundles. This problem was already well present to Ehresmann, who was probably the first to observe that it could be reduced to a simpler question, namely, that of determining the obstructions to the existence of a soldering (“sou\-dure”) \cite{Ehr50}. The latter problem can in turn be formulated as one concerning the existence of $G$-structures, where $G$ is the structure group of the principal bundles which the parallelisms are supposed to be defined on \cite{Chern,Koba}. Although there are a number of techniques in the theory of fiber bundles that may be used to find necessary and sufficient conditions for a manifold to admit $G$-structures (obstruction theory, characteristic classes, cohomology operations, etc.)\ \cite{Huse,MilnS,Steen}, the existence problem remains a difficult one in general: this is already true when $G = \{1\}$, in which case the task becomes that of deciding whether or not a given manifold is parallelizable.

Once Lie groupoids are allowed into the picture it is virtually impossible not to think about the natural generalization of the above classical problem\spacefactor3000: Determine the obstructions to the existence of multiplicative connections on Lie groupoids. Of course when the Lie groupoids in question are transitive there are a number of criteria for existence that can be deduced at once from the known solutions to the classical problem. On the other hand when the groupoids are not transitive very little seems to be known in terms not only of practical results but also of theoretical methods. Given this state of affairs as well as the difficulty of the classical problem already, one may look at the general problem with pessimism. While there do indeed seem to be grounds for pessimism as far as \emph{arbitrary} Lie groupoids go, it can nevertheless be argued that a workable theory might exist provided only \emph{proper} Lie groupoids are taken into consideration. In the present article we are going to substantiate this view and, more concretely, to lay down what we consider to be the foundation stone for the theory we have in prospect: the method of \emph{reduction to the regular case.}

Our method introduces simplifications into the general problem which are analogous in many ways to the simplifications that Ehresmann's observation introduces into the classical problem. Such analogy is corroborated by the fact that our simplified general problem, much like Ehresmann's simplified classical problem, can be made to fit into the standard conceptual framework of (topological, equivariant) obstruction theory \cite{Tre8}. The analogy should not be pushed too far, however. Our analysis works for quite different (and much deeper) reasons than Ehresmann's, as witnessed by the fact that in ours the hypothesis of properness plays a crucial role which has no counterpart in the classical transitive case; examples can be given of Lie groupoids that are regular but neither proper nor transitive for which our general problem cannot possibly be reformulated in the way indicated in \cite{Tre8} as one amenable to obstruction-theoretic treatment.

Our method of reduction to the regular case is underpinned by a number of original results whose statements, together with a description of the method itself, may be found in section \ref{sec:results} of this paper. With the exception of section \ref{sec:preliminaries}, which recollects some indispensable background material and is intended mainly for the non-specialist, the rest of the paper is devoted to the demonstration of these results. Our arguments involve a combination of fairly standard ideas whose relevance for the study of proper Lie groupoids was first pointed out by A.~Weinstein in the context of his pioneering work on the linearization problem \cite{We00,Wein} and which directly or indirectly influenced later work on the same problem by other authors \cite{CS,Zung}. Among these ideas, the use of groupoid cohomology \cite{Crainic03,PS,Ren} and of (recursive) averaging \cite{delaHK,GKR,Wein00} plays a prominent role. Our practical implementation of the latter techniques takes the form of an averaging operator for groupoid connections (section \ref{sec:averaging}) which is a key ingredient of our theory and which, we believe, is also of independent interest: in retrospect, had Weinstein been aware of our averaging operator, he could well have used it to prove the linearization theorem he had conjectured in \cite{We00,Wein}---for, by a classical argument \cite{Bredon}, one can easily deduce such theorem from the local existence of multiplicative connections around fixed points---in a way that would perhaps have been closer in spirit to his own original approach than N.~T.~Zung's proof \cite{Zung}.

\section{Preliminaries}\label{sec:preliminaries}

The present section may be skipped, and referred back to only when needed, by those readers who are already familiar with the notion of multiplicative connection.

All manifolds in this paper are of class $C^\infty$, Hausdorff, and second countable. We allow them to be disconnected, but we do not allow their components to have different dimensions. We refer to such manifolds briefly as \emph{smooth manifolds.} If $f: P \to M$ is any $C^\infty$ map between smooth manifolds, we write $\der f: \T P \to f^*\T M$ for the morphism of smooth vector bundles over $P$ that corresponds to the tangent map $\T f: \T P \to \T M$; this is consistent with viewing $\der f$ as a $f^*\T M$~valued one-form on $P$. If $f$ is submersive, and if $b: S \to M$ is any other $C^\infty$ map of a smooth manifold into $M$, we write $S \ftimes{b}{f} P$ for the closed $C^\infty$ submanifold $\{(y,z) \in S \times P: b(y) = f(z)\}$ of the cartesian product $S \times P$.

We assume familiarity with the basic theory of Lie groupoids at the level say of Chapters 5 and 6 of \cite{MM}. For each arrow $g \in \varGamma$ in a groupoid $\varGamma \tto M$ we write $sg$,~$tg \in M$ for the source, respectively, the target, of $g$, and $g^{-1} \in \varGamma$ for its inverse. For any two arrows $g_1$,\ $g_2$ forming a composable pair in the sense that $sg_1 = tg_2$, we write $g_1g_2 = g_1 \cdot g_2 \in \varGamma$ for the composition, also called the product, of $g_1$ and $g_2$. We further write $1x \in \varGamma$ for the identity, also called the unit, at a base point $x \in M$. Whenever $\varGamma$ and $M$ are each given the structure of a smooth manifold in such a way that the source map $s: \varGamma \to M$ is both $C^\infty$ and submersive and the unit map $1: M \to \varGamma$, as well as the division map $\varGamma \ftimes{s}{s} \varGamma \to \varGamma$, $(g,h) \mapsto gh^{-1}$, is $C^\infty$, we call $\varGamma \tto M$ a \emph{Lie groupoid;} this notion is a bit more restrictive than the homonymous notion defined in \cite{MM} but is better suited for the needs of our study.

Let $E$ be a smooth vector bundle over the base $M$ of a Lie groupoid $\varGamma \tto M$. By a \emph{pseudo-representation} of $\varGamma \tto M$ on $E$ we mean an arbitrary morphism of smooth vector bundles over $\varGamma$ from $s^*E$ to $t^*E$ i.e.~an arbitrary global $C^\infty$ cross-section of the ``internal hom'' bundle $L(s^*E,t^*E) \to \varGamma$. To each arrow $g \in \varGamma$ a pseudo-representation $\lambda: s^*E \to t^*E$ assigns a linear map $\lambda_g: E_{sg} \to E_{tg}$, where $E_x$ denotes the fiber of $E$ at $x$. If $\lambda_{1x} = \id$ for all $x$, we say that $\lambda$ is \emph{unital.} If in addition $\lambda_{g_1g_2} = \lambda_{g_1}\lambda_{g_2}$ for all pairs of arrows $g_1$,~$g_2$ with $sg_1 = tg_2$, we say that $\lambda$ is a \emph{representation.}

By a \emph{connection} on the Lie groupoid $\varGamma \tto M$ we mean an arbitrary right splitting $\eta$ of the following short exact sequence of smooth vector bundles over $\varGamma$.
\begin{equation}
\xymatrix{%
 0 \ar[r]
 &	\ker\der s \ar[r]
	&	\T\varGamma \ar[r]^-{\der s}
		&	\ar@/^1pc/[l]^-{\eta} s^*\T M \ar[r]
			&	0}
\label{eqn:12B.8.1}
\end{equation}
Any such splitting is fully determined by its image, $H$, which is a smooth vector distribution on $\varGamma$, i.e.~a smooth vector subbundle of $\T\varGamma$, such that $\der s$ restricts to an isomorphism of $H$ onto $s^*\T M$. We may thus refer to the splitting $\eta$ or distribution $H$ indifferently as a connection on $\varGamma \tto M$; for expository purposes we choose to regard $H$ as fundamental and we refer to $\eta = (\der s \mathbin| H)^{-1}$ as the associated \emph{horizontal lift} $\eta^H$. For each arrow $g \in \varGamma$ the connection $H$ determines a linear map $\eta^H_g: \T_{sg}M \to \T_g\varGamma$ satisfying $\T_gs \circ \eta^H_g = \id$. When $\eta^H_{1x} = \T_x1$ for all $x$ in $M$, we say $H$ is \emph{unital.} Upon composing $\eta^H: s^*\T M \to \T\varGamma$ with $\der t: \T\varGamma \to t^*\T M$, we obtain a pseudo-representation of $\varGamma \tto M$ on $\T M$ which we call the \emph{effect} of $H$ and denote $\lambda^H$; our terminology is consistent with the standard terminology in use for {étale} groupoids \cite[pp.~136--137]{MM}. The relationship between connections $H$ and pseudo-representations $\lambda^H$ will be central to our theory. Clearly $\lambda^H$ must be unital if so is $H$, but not vice versa. From the obvious fact that $1(M)$ is a closed $C^\infty$ submanifold of $\varGamma$ and the existence of partitions of unity on $\varGamma$, it follows immediately that $\varGamma \tto M$ admits unital connections.

Generically speaking, the \emph{tangent groupoid} of $\varGamma \tto M$ is the Lie groupoid $\T\varGamma \tto \T M$ whose structure maps $\T s$,\ $\T t$,\ etc.~are obtained by differentiating those of $\varGamma \tto M$; more specifically, the tangent multiplication law owes its definability to the fact that the natural map of $\T(\varGamma \ftimes{s}{t} \varGamma)$ into $\T\varGamma \ftimes{\T s}{\T t} \T\varGamma$ is a diffeomorphism. A connection on $\varGamma \tto M$ is said to be \emph{multiplicative} whenever its vector distribution $H$ is a subgroupoid $H \tto \T M$ of the tangent groupoid $\T\varGamma \tto \T M$. The multiplicativity of $H$ may be expressed more conveniently in terms of the horizontal lift $\eta^H$ of $H$: a connection $H$ on $\varGamma \tto M$ is multiplicative if and only if it is unital and, for every composable pair of arrows $g_1$,~$g_2$ and tangent vector $v \in \T_{sg_2}M$, it satisfies the following identity, the product in the right-hand side of which is multiplication within $\T\varGamma \tto \T M$.
\begin{equation}
	\eta^H_{g_1g_2}v = \eta^H_{g_1}\lambda^H_{g_2}v \cdot \eta^H_{g_2}v
\label{prop:12B.9.4}
\end{equation}
The proof is straightforward. Upon applying the tangent map $\T_{g_1g_2}t$ to both sides of this identity one deduces at once that the effect $\lambda^H$ of a multiplicative connection $H$ is a representation. Although the condition \eqref{prop:12B.9.4} does not by itself imply multiplicativity, it comes quite close to it: indeed, dividing both members of the identity $\eta^H_{1x}v = \eta^H_{1x}\lambda^H_{1x}v \cdot \eta^H_{1x}v$ in $\T\varGamma \tto \T M$ from the right by $\eta^H_{1x}v$ yields $(\T_x1)\lambda^H_{1x}v = \eta^H_{1x}\lambda^H_{1x}v$, whence provided $\lambda^H_{1x}$ is \emph{onto,} $\T_x1 = \eta^H_{1x}$. It will be convenient to let $\eta^H_{g_1}\lambda^H_{g_2} \cdot \eta^H_{g_2}$ denote the linear map $\T_{sg_2}M \to \T_{g_1g_2}\varGamma$ defined by the right-hand side of \eqref{prop:12B.9.4}.

While unital connections do commonly exist in profusion on any Lie groupoid, multiplicative connections are mathematical objects of a much more elusive type.

\begin{exmps*} \textit{(a)\spacefactor3000} Let the Lie group $G$ act on the smooth manifold $M$ from the left in a $C^\infty$ fashion. Let $g$ denote both an element of $G$ and the transformation of $M$ given by $x \mapsto gx$. By definition, the \emph{action groupoid} $G \times M \tto M$ has source map $(g,x) \mapsto x$, target map $(g,x) \mapsto gx$, and multiplication law $(g',gx) \cdot (g,x) = (g'g,x)$. The vector distribution $H$ on $G \times M$ tangent to the fibers of the projection $G \times M \to G$ is a connection on $G \times M \tto M$ with effect $\lambda^H_{g,x} = \T_xg$. One recognizes without difficulty that $H$ is multiplicative.

\textit{(b)\spacefactor3000} Any Lie groupoid $\varGamma \tto M$ whose source map and target map coincide may be thought of as a family of Lie groups, the \emph{isotropy groups} of $\varGamma \tto M$, parameterized ``smoothly'' by the points of $M$. We call any such $\varGamma \tto M$ a \emph{Lie group bundle.} We contend that if every isotropy group is both abelian and compact there can be at most one multiplicative connection on $\varGamma \tto M$; in particular, the action groupoid $G \times M \tto M$ associated with any compact abelian $G$ operating trivially on $M$ admits exactly one multiplicative connection (the one described in the previous example). To prove it, we start by noting that for every multiplicative connection $H$ the $H$-horizontal lift of any vector field on $M$ is a vector field $\xi$ on $\varGamma$ which is \emph{multiplicative} in the sense that $\xi(g_1g_2) = \xi(g_1) \cdot \xi(g_2)$ for all $g_1$,\ $g_2$ in the same isotropy group. For any two such connections $H_0$ and $H_1$ the difference between the $H_0$-horizontal lift $\xi_0$ of a vector field on $M$ and its $H_1$-horizontal lift $\xi_1$ is a multiplicative vector field $\xi_0 - \xi_1$ with values in $\ker\der s$. It will then suffice to show that under the stipulated hypotheses any such vector field is necessarily zero. Now, its restriction to each isotropy group $G$ may be viewed as a multiplicative vector field on $G$. The restriction to $\mathfrak{g} = \T_1G$ of the multiplication law of the tangent group $\T G$ is given by $X \cdot Y = X + Y$.%
\footnote{%
\label{npar:12B.9.3}%
 This identity is valid in $\T\varGamma \tto \T M$ for any Lie groupoid $\varGamma \tto M$ for all $X \in \ker\T_{1x}s$, $Y \in \ker\T_{1x}t$: indeed, by the linearity of tangent multiplication, $X \cdot Y = (X + 0_{1x}) \cdot (0_{1x} + Y) = X \cdot 0_{1x} + 0_{1x} \cdot Y = X + Y$ because $0_{1x} = (\T_x1)0_x$ is the unit tangent arrow at $0_x \in \T_xM$.
} %
Upon setting $X(g) = \xi(g) \cdot 0_{g^{-1}}$ we obtain a linear one-to-one correspondence between vector fields $\xi$ on $G$ and $\mathfrak{g}$-valued $C^\infty$ functions $X$ on $G$. It follows at once from the commutativity of ($G$ and hence) $\T G$ that $\xi$ is multiplicative iff the corresponding $X$ is \emph{additive:} $X(g_1g_2) = X(g_1) + X(g_2)$. But then, since $G$ is compact, the sequence $n \mapsto nX(g) = X(g^n)$ has to be bounded, which cannot happen unless $X(g) = 0$.

\textit{(c)\spacefactor3000} Any Lie groupoid $\varGamma \tto M$ whose \emph{anchor map} $(t,s): \varGamma \to M \times M$ is both surjective and submersive%
\footnote{%
 Actually, the submersiveness of the anchor map follows from its surjectivity; one does not need to postulate it. See the appendix of \cite{Tre6} for a simple proof of this fact.
} %
is said to be \emph{transitive.} These Lie groupoids lie figuratively speaking at the very antipodes of the Lie group bundles mentioned in the previous example. The simplest transitive Lie groupoid over a given $M$ is the \emph{pair groupoid:} $M \times M \tto M$ with anchor map the identity. The multiplicative connections on $M \times M \tto M$ are easily recognized to correspond one-to-one to the global trivializations of the tangent bundle $\T M$, so that $M \times M \tto M$ admits multiplicative connections iff $M$ is parallelizable. The pair groupoid over the two-sphere $S^2$ constitutes the simplest example of a Lie groupoid admitting no multiplicative connections. \end{exmps*}

\subsection{The extension problem}\label{sub:problem}

Given any $C^\infty$ map $b: S \to M$ of a smooth manifold into the base of a Lie groupoid $\varGamma \tto M$ we may ask whether the pullback of $\varGamma \tto M$ along $b$, i.e., the groupoid $S \ftimes{b}{t} \varGamma \ftimes{s}{b} S \tto S$, is a Lie groupoid in its own right. Such will evidently be the case whenever the map of $\varGamma \ftimes{s}{b} S$ into $M$ that sends $(g,y)$ to the target of $g$ is a submersion. When this happens, we say $b$ is \emph{transversal.} If $\varGamma \ftimes{s}{b} S \to M$ is also onto, we say $b$ is \emph{completely} transversal. Any transversal inclusion of a submanifold is called a \emph{slice.} The pullback of $\varGamma \tto M$ along a slice $S \subset M$ may be identified with the restriction of $\varGamma \tto M$ over $S$, i.e., the subgroupoid $\varGamma \mathbin| S = s^{-1}(S) \cap t^{-1}(S) \tto S$ of $\varGamma \tto M$ consisting of all arrows whose source and target lie in $S$. We shall need the notion of transversality in section \ref{sec:results} in conjunction with the following criterion, which, for future reference, we dub the ``Morita trick\spacefactor3000:'' \em Let\/ $Z$ be a subset of\/ $M$ which is\/ \emph{invariant} in the sense that it coincides with its own\/ \emph{saturation} $\varGamma Z = t\bigl(s^{-1}(Z)\bigr)$. Let\/ $b: S \to M$ be a completely transversal map. Then, $Z$ is a\/ $C^\infty$ submanifold of\/ $M$ iff its inverse image under\/ $b$ is a\/ $C^\infty$ submanifold, $b^{-1}(Z)$, of\/ $S$. \em The proof is easy and runs as follows: $Z$ is a submanifold of $M$ iff its preimage $t^{-1}(Z) \ftimes{s}{b} S$ under the surjective submersion $\varGamma \ftimes{s}{b} S \to M$ is a submanifold of $\varGamma \ftimes{s}{b} S$; by the invariance of $Z$, this preimage coincides with $\varGamma \ftimes{s}{b} b^{-1}(Z)$, the preimage of $b^{-1}(Z)$ under the projection $\varGamma \ftimes{s}{b} S \to S$; the latter map is the pullback of $s$ along $b$, and hence is itself a surjective submersion.

Apart from slices, which play a marginal role in this paper, there are other submanifolds of $M$ over which $\varGamma \tto M$ restricts ``nicely'' which are contextually far more important for our theory of connections. Let $Z$ be a submanifold of $M$ of class $C^\infty$ which is \emph{locally invariant} in the sense that it can be covered with open subsets $U$ of $M$ satisfying $U \cap \varGamma Z \subset Z$. Of course the union of all such $U$ is itself one such subset so there is actually some $U$ for which $U \cap \varGamma Z = Z$ in other words $\varGamma \mathbin| Z = s^{-1}(Z) \cap t^{-1}(U)$. $\varGamma \mathbin| Z$ is therefore open inside the $C^\infty$ submanifold $s^{-1}(Z)$ of $\varGamma$ and we have that $\varGamma \mathbin| Z \tto Z$ is a \emph{Lie} subgroupoid of $\varGamma \tto M$.%
\footnote{%
\label{ftn:Z}%
 Strictly speaking according to our conventions $\varGamma \mathbin| Z \tto Z$ may fail to be a Lie groupoid in that different invariant components of $Z$ may have different dimensions. We shall nevertheless refer to it improperly as a Lie groupoid as a convenient way of collectively referring to the Lie groupoids $\varGamma \mathbin| S \tto S$ that one obtains by letting $S$ vary over all invariant components of $Z$; in this respect one should note that by the existence of local bisections of $\varGamma \tto M$ \cite[p.~115]{MM} and by the local invariance of $Z$ one has $\dim\T_{sg}Z = \dim\T_{tg}Z$ for all $g \in \varGamma \mathbin| Z$, so that every such $S$ is indeed a submanifold of definite, overall constant, dimension.
} %
Any connection $H$ on $\varGamma \tto M$ restricts to a connection on $\varGamma \mathbin| Z \tto Z$ which we designate $H \mathbin| Z$ and refer to as the connection \emph{induced} by $H$ \emph{along} $Z$, whose horizontal lift at $g \in \varGamma \mathbin| Z$ is
\begin{equation}
	\eta^{H|Z}_g = \eta^H_g \mathbin| \T_{sg}Z: \T_{sg}Z \longto (\T_gs)^{-1}(\T_{sg}Z) = \T_g(\varGamma \mathbin| Z).
\label{eqn:H|Z}
\end{equation}
(In particular $H$ will induce a connection in this way along any open subset or invariant submanifold of $M$.) The effect of $H \mathbin| Z$ is simply the restriction of the effect of $H$ to $\T Z$.
\begin{equation}
	\lambda^{H|Z}_g = \lambda^H_g \mathbin| \T_{sg}Z: \T_{sg}Z \longto \T_{tg}Z
\label{eqn:H|Z*}
\end{equation}
It is immediate that $\lambda^{H|Z}$ must be a representation whenever so is $\lambda^H$ and that $H \mathbin| Z$ must be multiplicative whenever so is $H$.

Although our focus in this paper will be on multiplicative connections, we shall also need to consider connections that are not multiplicative. In order to make the distinction easier to track, we shall now start employing the variables $\varPhi$ and $\varPsi$ for \emph{multiplicative} connections while retaining our use of the variable $H$ for those connections whose multiplicativity is \emph{not} postulated or granted.

\begin{prob}\label{prob:ext} Let $\varGamma \tto M$ be a Lie groupoid. Let $U$ and $V$ be open subsets of $M$ such that $\overline{U}$, the closure of $U$ in $M$, is contained in $V$. Let $\varPhi$ be a multiplicative connection on $\varGamma \mathbin| V \tto V$. What are the obstructions to extending $\varPhi \mathbin| U$ to a multiplicative connection defined on all of $\varGamma \tto M$? \end{prob}

The role of the larger open set $V \supset \overline{U}$ of course is simply to prevent the problem from failing to be solvable for trivial reasons: since the class of unital connections is closed under taking locally finite, convex, linear combinations, a straightforward argument involving partitions of unity yields the existence of a unital connection $H$ on $\varGamma \tto M$ such that $H \mathbin| U = \varPhi \mathbin| U$; the question, then, is whether among all such $H$ we can find one which is also multiplicative. It is instructive to inquire about the most general circumstances under which a partition of unity argument can be used to glue a family of partially defined multiplicative extensions of $\varPhi \mathbin| U$ into one global such extension. Namely, let us suppose that $M$ can be covered with open \emph{invariant} sets $V_i = \varGamma V_i$ such that for each $i$ there is a multiplicative connection $\varPhi_i$ on $\varGamma \mathbin| V_i \tto V_i$ which agrees with $\varPhi$ along $U \cap V_i$, and that the \emph{effects} of $\varPhi_i$ and $\varPhi_{i'}$ \emph{coincide} along $V_i \cap V_{i'}$ for all $i$,\ $i'$. Let us further suppose that there exist real-valued functions $c_j \in C^\infty(M)$ with $\supp c_j \subset V_{i(j)}$ which are \emph{invariant} in the sense that $c_j \circ s = c_j \circ t$ and which satisfy $\sum c_j = 1$ (locally finite sum). Then $\sum{}(c_j \circ s)\eta^{\varPhi_{i(j)}}$ is evidently the horizontal lift for a new connection on $\varGamma \tto M$ which extends $\varPhi \mathbin| U$ and which, by the indicated assumptions and by the linearity of tangent multiplication, satisfies \eqref{prop:12B.9.4} and thus is \emph{multiplicative.}

The special case of Problem \ref{prob:ext} where one takes $U = V = \emptyset$ is the question which originally motivated the investigations whose results form the subject of this paper:

\begin{prob}\label{prob:ext*} What are the obstructions to the existence of multiplicative connections on $\varGamma \tto M$? \end{prob}

There are however at least two good reasons for us to consider the more general relative version of Problem \ref{prob:ext*} formulated as Problem \ref{prob:ext}. The first reason is that, as we shall explain in the next section, the latter problem happens to play an essential role in our method for analyzing the obstructions to the existence of multiplicative connections on proper Lie groupoids (\emph{reduction to the regular case}). The second reason is that Problem \ref{prob:ext} includes, as special cases, other questions which we wish to answer about multiplicative connections which are interesting in their own right. Among such questions, the following one will prove to be particularly relevant from the viewpoint of our analysis:

\begin{prob}\label{prob:ext**} Let $\varPhi_0$,\ $\varPhi_1$ be multiplicative connections on $\varGamma \tto M$. What are the obstructions to deforming $\varPhi_0$ into $\varPhi_1$ smoothly through multiplicative connections? \end{prob}

Problem \ref{prob:ext**} is only one of the several instances of Problem \ref{prob:ext} which result from replacing $\varGamma \tto M$ with $\varGamma \times S \tto M \times S$, the product of $\varGamma \tto M$ by the unit groupoid $S \tto S$ over an arbitrary smooth manifold $S$, and from taking each one of $U$,\ $V$ to be the product of $M$ by an open subset of $S$. Any connection $H$ on $\varGamma \times S \tto M \times S$ whose horizontal lift is of the form $\eta^H_{g,y} = \eta^{H_y}_g \times \id: \T_{sg}M \times \T_yS \to \T_g\varGamma \times \T_yS$ ($g \in \varGamma$, $y \in S$), where $H_y$ denotes the connection on $\varGamma \tto M$ induced by $H$ along the invariant submanifold $M \simeq M \times \{y\} \subset M \times S$, may be viewed as a family of connections $H_y$ on $\varGamma \tto M$ parameterized smoothly by the points $y$ of $S$. Clearly $H$ will be multiplicative iff so is every $H_y$. More generally, whether or not of the indicated form, $H$ will give rise to one such parametric family, in which every $H_y$ will be multiplicative when so is $H$. With this understood, Problem \ref{prob:ext**} corresponds to taking $S = \R$, $H_y = \varPhi_0$ for $y < 0$, and $H_y = \varPhi_1$ for $y > 1$.

\section{Statement of main results. Applications}\label{sec:results}

Our strategy for the analysis of Problems \ref{prob:ext*} and \ref{prob:ext**} works on condition that the Lie groupoid $\varGamma \tto M$ be \emph{proper} in other words all compact sets in $M \times M$ have compact inverse image under the anchor map $(t,s): \varGamma \to M \times M$: we shall therefore require $\varGamma \tto M$ to be proper throughout the rest of the paper.

Following \cite{Ren,Tu} we shall make use of the auxiliary notations $\varGamma_x = s^{-1}(x)$, $\varGamma^x = t^{-1}(x)$, and $\varGamma_x^x = \varGamma_x \cap \varGamma^x$ for the source fiber, the target fiber, and the isotropy group of $\varGamma \tto M$ at a point $x \in M$. Following \cite{MM} we shall write $\varGamma x = t(\varGamma_x)$ for the orbit of $x$. As before, $\varGamma S = t\bigl(s^{-1}(S)\bigr)$ denotes the invariant saturation of a subset $S \subset M$, and $\varGamma \mathbin| S = s^{-1}(S) \cap t^{-1}(S)$. We begin by reviewing a couple of basic facts about the orbit structure of $\varGamma \tto M$.

It is standard knowledge that, because of properness, the orbit $\varGamma x$ through any base point $x$ is a closed submanifold of $M$ of class $C^\infty$; a proof can be found e.g.~in \cite{Wein}. This submanifold is also of definite (i.e.~overall constant) dimension, essentially because $M$ is.%
\footnote{%
 This might no longer be true if $M$ were allowed to contain components of different dimensions. Think of the pair groupoid $M \times M \tto M$ as an obvious counterexample.
} %
Then, for each integer $r \geq 0$, the set
\begin{equation}
	M_r = \{x \in M: \dim\varGamma x = r\}
\label{eqn:12B.19.2}
\end{equation}
of all base points lying on $r$-dimensional orbits is invariant. The union $\bigcup_{q=0}^r M_q$ is a closed subset of $M$, its complement being the set of all those base points where the rank of the vector bundle morphism $1^*\ker\der s \xto{1^*\der t} 1^*t^*\T M \simeq \T M$ is at least $r + 1$. Because of properness, every $M_r$ must be an invariant submanifold of $M$ of class $C^\infty$: for $r = 0$, this is a direct consequence of the linearization theorem \cite{CS,FdH,Zung}, since for a linear action of a Lie group the union of all zero-dimensional orbits is the linear subspace consisting of all those vectors that under the infinitesimal counterpart of the action are annihilated by every Lie algebra element; the case $r > 0$ follows by cutting through each point of $M_r$ a slice of dimension complementary to $r$ contained in the (open, invariant) complement of $\bigcup_{q=0}^{r-1} M_q$ and then using the ``Morita trick'' mentioned at the beginning of subsection \ref{sub:problem}. The reader should bear in mind that different invariant components of $M_r$ may have different dimensions.%
\footnote{%
 See footnote \ref{ftn:Z} on page \pageref{ftn:Z}.}

We proceed to describe our main results, pointing out their relevance for the analysis of the problems discussed in subsection \ref{sub:problem}. We start with the one which has the simplest formulation:

\begin{thm}\label{prop:12B.11.7} Let\/ $H$ be a connection on the proper Lie groupoid\/ $\varGamma \tto M$ whose effect\/ $\lambda^H = \der t \circ \eta^H$ is a representation. There exists a multiplicative connection on\/ $\varGamma \tto M$ with the same effect as\/ $H$ that agrees with\/ $H$ along every invariant\/ $C^\infty$ submanifold of\/ $M$ along which\/ $H$ already induces a multiplicative connection. \end{thm}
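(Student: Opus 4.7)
The plan is to produce $\varPhi$ by applying the averaging operator $\mathrm{Av}$ constructed in Section \ref{sec:averaging} to the given connection $H$. Properness of $\varGamma \tto M$ entails the existence of a smooth, normalized cutoff density on the target fibers $\varGamma^x$, permitting integration of tangent-vector-valued families of the form $h \mapsto \eta^H_h \lambda^H_{h^{-1}g}v \cdot \eta^H_{h^{-1}g}v$ with $h$ ranging over $\varGamma^{tg}$ and $g = h \cdot (h^{-1}g)$. These are precisely the candidate values of a multiplicative horizontal lift at $g$ read off from \eqref{prop:12B.9.4} via the various decompositions of $g$, so it is natural to take their weighted average as the definition of $\eta^\varPhi_g v$.

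Having defined $\varPhi := \mathrm{Av}(H)$, I would first verify that it has the same effect as $H$\spacefactor3000: applying $\der t$ under the integral sign turns the integrand into $\lambda^H_h \lambda^H_{h^{-1}g}v = \lambda^H_g v$ (which is where the hypothesis that $\lambda^H$ is a representation enters in an essential way), and integration against the normalized cutoff reproduces $\lambda^H_g v$. The multiplicativity of $\varPhi$ is the substantive content and the main technical obstacle of the whole argument, to be addressed in Section \ref{sec:averaging}. Its justification should rest on a change-of-variables identity for the chosen cutoff density on the target fibers combined with the algebraic structure of the tangent groupoid---notably the identity recalled in footnote \ref{npar:12B.9.3}, which reduces tangent multiplication of suitably transverse vectors to ordinary vector addition and so makes the otherwise nonlinear condition \eqref{prop:12B.9.4} enforceable through a single integration.

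It remains to verify the last clause of the theorem, namely that $\varPhi$ agrees with $H$ on $\varGamma \mathbin| Z$ for every invariant $C^\infty$ submanifold $Z$ of $M$ along which $H$ already induces a multiplicative connection. This is an idempotence-like property of the averaging operator that can be checked directly from the formula. Fix $g \in \varGamma \mathbin| Z$ and pick any $h \in \varGamma^{tg}$\spacefactor3000: then $th = tg \in Z$, and the invariance of $Z$ forces $sh \in \varGamma th \subseteq Z$, so $h \in \varGamma \mathbin| Z$; similarly $h^{-1}g \in \varGamma \mathbin| Z$. The integrand in the averaging formula at $g$ therefore involves only the restriction $H \mathbin| Z$, which is multiplicative by hypothesis, so by \eqref{prop:12B.9.4} it reduces to $\eta^H_g v$ independently of $h$, and its weighted average is $\eta^H_g v$ itself. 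Granted the construction of $\mathrm{Av}$ and its main properties, Theorem \ref{prop:12B.11.7} then follows by inspection.
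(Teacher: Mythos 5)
Your overall strategy---average, against a normalized cut-off on a target fiber, the candidate values of a multiplicative horizontal lift at $g$ read off from the various factorizations of $g$---is the paper's (Definition \ref{defn:12B.11.2}), up to the cosmetic difference that you average the products $\eta^H_h\lambda^H_{h^{-1}g}v\cdot\eta^H_{h^{-1}g}v$ over $h\in\varGamma^{tg}$ whereas the paper averages the ratios $[\eta^H_{gh}\cdot(\eta^H_h)^{-1}]\circ(\lambda^H_h)^{-1}$ over $h\in\varGamma^{sg}$. Your verifications of the two easy properties---that the effect is unchanged (indeed using that $\lambda^H$ is a representation) and that the average agrees with $H$ along any invariant $Z$ along which $H\mathbin|Z$ is already multiplicative---are correct and coincide with the paper's computations \eqref{eqn:12B.11.7} and \eqref{lem:hat|Z}.

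The gap is that the one claim that constitutes the theorem, namely that the averaged connection is \emph{multiplicative}, is nowhere proved: you defer it with the remark that it ``should rest on a change-of-variables identity \dots\ combined with'' the additivity of footnote \ref{npar:12B.9.3}. This does not engage with the actual difficulty. The condition \eqref{prop:12B.9.4} is quadratic in the connection, so $\eta^\varPhi_{g_1}\lambda^H_{g_2}v\cdot\eta^\varPhi_{g_2}v$ becomes (by linearity of tangent multiplication and normalization) a double integral whose integrand $[\eta^H_h\lambda^H_{h^{-1}g_1}\lambda^H_{g_2}v\cdot\eta^H_{h^{-1}g_1}\lambda^H_{g_2}v]\cdot[\eta^H_k\lambda^H_{k^{-1}g_2}v\cdot\eta^H_{k^{-1}g_2}v]$ does not collapse onto the integrand defining $\eta^\varPhi_{g_1g_2}v$ by any change of variables: collapsing the middle factors would require a multiplicativity identity for $H$ itself, which is precisely what is not available. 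The paper's resolution is a genuine linearization of the problem: connections with effect $\lambda^H$ are parameterized by $1$-cochains $X$ with values in $L(\T M,\mathfrak{b})$ via \eqref{eqn:14A.14.3}; Lemma \ref{lem:coboundary} shows---using crucially that $X$ is $\mathfrak{b}$-valued, which kills the quadratic term in \eqref{eqn:defect}---that multiplicativity is equivalent to the coboundary equation $\delta X=\varDelta^H$; the cocycle identity \eqref{eqn:cocycle*} for the basic curvature, itself a nontrivial computation, guarantees via Lemma \ref{lem:cocycle} that the Haar-averaged primitive $\widehat{\varDelta^H}$ solves that equation; and the identity \eqref{eqn:12B.15.5a} identifies the resulting connection with the mean ratio $\hat{H}$. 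Some argument of this kind (or an equivalent estimate showing the defect of the average is quadratically small, as in section \ref{sec:estimates}) is indispensable; without it your proposal establishes everything except the theorem.
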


The proof will be given in section \ref{sec:averaging}. As a first illustrative application of this result to the existence problem \ref{prob:ext*}, suppose $\varGamma \tto M$ is a proper Lie group bundle: in this case the target map equals the source map so for any choice of $H$ the condition on $H$ in our theorem is satisfied in the tautological form $\lambda^H = \id$ and we conclude that there is at least one multiplicative connection. We could of course have arrived at the same conclusion by combining the linearization theorem of \cite{Wein}, which implies the local existence of multiplicative connections, with the gluing argument described in the paragraph following Problem \ref{prob:ext}; however, we shall soon meet applications where using our theorem appears to be the most viable, if not the only, option. As to the deformation problem \ref{prob:ext**}, if we are given any two multiplicative connections $\varPhi_0$ and $\varPhi_1$ then the connections $H_\epsilon$ defined for all $\epsilon$ in $\R$ by $\eta^{H_\epsilon} = (1 - \epsilon)\eta^{\varPhi_0} + \epsilon\eta^{\varPhi_1}$ provide a $C^\infty$ one-parameter family deforming $\varPhi_0$ into $\varPhi_1$, and a moment's reflection shows that every $H_\epsilon$ satisfies \eqref{prop:12B.9.4} and hence is multiplicative. In general, on an \emph{arbitrary} Lie groupoid, the deformation problem can be solved in this way provided the effects of $\varPhi_0$ and $\varPhi_1$ coincide.

Our next result is a far-reaching generalization of the previous one. It has not only a broader spectrum of applications but also a deeper significance, as witnessed by the fact that its proof, which will occupy us for most of sections \ref{sec:estimates} and \ref{sec:proof}, requires a conspicuous amount of extra work in addition to what is needed to prove Theorem \ref{prop:12B.11.7}. Still, as its statement is somewhat technical, it seems a good idea to precede it with the simpler Theorem \ref{prop:12B.11.7}, also because the latter has its own importance, and we shall need to refer back to it more than once.

\begin{thm}\label{thm:main} Let\/ $H$ be a connection on the proper Lie groupoid\/ $\varGamma \tto M$. Let\/ $S$ be an invariant subset of\/ $M$ over which the effect\/ $\lambda^H$ of\/ $H$ is a representation in the sense that\/ $\lambda^H_{1x} = \id$ for all\/ $x$ in\/ $S$ and\/ $\lambda^H_{g_1g_2} = \lambda^H_{g_1}\lambda^H_{g_2}$ for all\/ $g_1$,~$g_2 \in \varGamma \mathbin| S$ for which\/ $sg_1 = tg_2$. Then, over some open neighborhood\/ $V$ of\/ $\overline{S}$, there exists for any choice of an open subset\/ $B$ of\/ $V$ such that\/ $\varGamma B \supset V$ a multiplicative connection\/ $\varPhi$ which satisfies\/ $\lambda^\varPhi_g = \lambda^H_g$ for all\/ $g$ in\/ $\varGamma \mathbin| S$ and which has the property that, for every locally invariant\/ $C^\infty$ submanifold\/ $Z$ of\/ $V$ such that\/ $B \cap \varGamma Z \subset Z$ and such that the connection\/ $H \mathbin| Z$ induced by\/ $H$ on\/ $\varGamma \mathbin| Z \tto Z$ is multiplicative, $\varPhi \mathbin| Z$ equals\/ $H \mathbin| Z$. \end{thm}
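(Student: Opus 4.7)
The plan is to reduce Theorem \ref{thm:main} to Theorem \ref{prop:12B.11.7}. Specifically, I would first replace $H$, on an open neighborhood of $\overline{S}$, by a connection whose effect is a genuine representation on the whole groupoid over that neighborhood, leaving its values on $\varGamma \mathbin| S$ and on the designated submanifolds $Z$ undisturbed; then invoke Theorem \ref{prop:12B.11.7}. A useful preliminary is that $\overline{S}$ is itself invariant: if $x_n \in S$ converges to $x$ and $g \in \varGamma$ has $sg = x$, then by the openness of the submersion $s$ one can lift to arrows $g_n \to g$ with $sg_n = x_n$, whence $tg_n \in S$ forces $tg \in \overline{S}$.

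The first and main stage is to construct an open invariant neighborhood $V \supset \overline{S}$ and a connection $\tilde H$ on the proper Lie groupoid $\varGamma \mathbin| V \tto V$ whose effect is a representation on all of $\varGamma \mathbin| V$, with $\lambda^{\tilde H}_g = \lambda^H_g$ for every $g \in \varGamma \mathbin| S$ and $\tilde H \mathbin| Z = H \mathbin| Z$ for every $Z$ as in the hypotheses. I would proceed local-to-global: around every orbit in $\overline{S}$ the linearization theorem for proper Lie groupoids provides a saturated tubular neighborhood on which $\varGamma$ is isomorphic to the action groupoid of a linear action of the compact isotropy on the normal bundle, and on such a model one may exhibit a multiplicative connection by hand (much as in Example~(a)) and then invariantly average it against the compact isotropy to force its effect to match $\lambda^H$ on the trace of $\varGamma \mathbin| S$ and the connection itself to match $H$ on the trace of each $Z$. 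These local pieces are then patched by an invariant partition of unity, exploiting the fact noted after Problem \ref{prob:ext}: the horizontal lift of a locally finite convex combination of connections with a common effect has the same common effect, so the representation property survives the gluing.

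Once $\tilde H$ is built, Stage 2 is immediate: Theorem \ref{prop:12B.11.7} applied to $\tilde H$ on $\varGamma \mathbin| V \tto V$ produces a multiplicative $\varPhi$ with $\lambda^\varPhi = \lambda^{\tilde H}$ that agrees with $\tilde H$ along every invariant $C^\infty$ submanifold on which $\tilde H$ is already multiplicative. This forces $\lambda^\varPhi_g = \lambda^H_g$ for $g \in \varGamma \mathbin| S$ automatically. The two auxiliary hypotheses on $B$ enter precisely when transferring agreement to the merely locally-invariant $Z$'s: $\varGamma B \supset V$ ensures that every orbit in $V$ meets $B$, while $B \cap \varGamma Z \subset Z$ ensures that the $\varGamma$-orbit of any point of $Z \cap B$ cannot leave $Z$ while remaining in $V$; together they let one replace $Z$, for the purpose of invoking the invariant-submanifold clause of Theorem \ref{prop:12B.11.7}, by an appropriate invariant saturation contained in $Z$.

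The main obstacle is Stage 1, in two intertwined respects: extending the representation condition from the subgroupoid $\varGamma \mathbin| S$ to a neighborhood of $\overline{S}$ is obstructed in general, and the argument must rely crucially on the linearization theorem to reduce it to the tractable case of action groupoids with compact isotropy; and the simultaneous preservation of $H$ on the non-invariant $Z$'s is precisely what the technical hypothesis $B \cap \varGamma Z \subset Z$ is engineered to make possible. Carrying this out compatibly is presumably what accounts for the heavy quantitative analysis of Sections \ref{sec:estimates} and \ref{sec:proof}.
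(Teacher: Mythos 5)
Your proposal does not follow the paper's route, and its central step is a genuine gap. The paper never replaces $H$ by a connection whose effect is an honest representation on a neighborhood of $\overline{S}$: instead it shows (subsection \ref{sub:deducing2}) that $\lambda^{H}$ is a \emph{near} representation in the quantitative sense of Definition \ref{defn:12B.14.1} over a carefully constructed open $V \supset \overline{S}$ (using the invariant metric of Lemma \ref{lem:metric} and a point-set argument assembling relatively invariant neighborhoods), and then obtains $\varPhi$ as the limit of the iterated mean ratios of $H \mathbin| V$ via the fast convergence theorem (Theorem \ref{thm:12B.15.1}), with the normalizing function chosen so that $\supp c \subset B$. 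Your Stage 1 --- producing $\tilde H$ on $\varGamma \mathbin| V \tto V$ with $\lambda^{\tilde H}$ an actual representation, matching $\lambda^H$ on $\varGamma \mathbin| S$ and $H$ on the $Z$'s --- is essentially as strong as the theorem itself and is not delivered by the method you sketch. The linearization theorem does not in general provide \emph{saturated} tubular neighborhoods; invariant linearizability can fail for proper groupoids, a point the paper itself makes immediately after Corollary \ref{thm:rank=0} when explaining why even that corollary does not follow from linearization. Moreover, the partition-of-unity gluing you invoke requires the local pieces to have a \emph{common effect} on overlaps (as the paper stresses after Problem \ref{prob:ext}); arranging that is precisely the obstructed global problem of extending the representation $\lambda^H$ from $\varGamma \mathbin| S$ to a genuine representation on a neighborhood, which your argument assumes rather than solves. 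This is also why your closing remark misreads the paper: the analysis of Sections \ref{sec:estimates} and \ref{sec:proof} is not there to make Stage 1 work, it is there to make Stage 1 unnecessary.

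A secondary error concerns the $Z$-clause. Theorem \ref{prop:12B.11.7} speaks only of \emph{invariant} submanifolds, and a locally invariant $Z$ with $B \cap \varGamma Z \subset Z$ need not contain any useful invariant submanifold (the set $\{z \in Z : \varGamma z \subset Z\}$ can be empty or of the wrong dimension), so one cannot ``replace $Z$ by an invariant saturation contained in $Z$.'' In the paper the hypothesis on $B$ acts through the normalizing function: with $\supp c \subset B$ the averaging integral at $g \in \varGamma \mathbin| Z$ only sees arrows of $\varGamma \mathbin| Z$, which is exactly what the computation \eqref{lem:hat|Z} uses to conclude $\hat{H} \mathbin| Z = H \mathbin| Z$, and this property then passes to every iterate and to the pointwise limit. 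The parts of your proposal that do survive are the preliminary observation that $\overline{S}$ is invariant (and that $\lambda^H$ remains a representation over it by continuity) and the general instinct to exploit invariant averaging; both appear in the paper, but in the near-representation form rather than the exact one.
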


We shall see in subsection \ref{sub:deducing1} that, among the consequences of Theorem \ref{thm:main}, there is the following local extension principle for multiplicative connections, which is one of the theoretic pillars on which we shall base our approach to Problems \ref{prob:ext*} and \ref{prob:ext**}:

\begin{thm}\label{prop:14A.5.2} Let\/ $\varGamma \tto M$ be a proper Lie groupoid. Let\/ $C$ be a closed invariant subset of\/ $M$, let\/ $V$ be an open neighborhood of\/ $C$, and let\/ $Z$ be an invariant submanifold of\/ $M$ of class\/ $C^\infty$. Suppose that\/ $\varPhi$ is a multiplicative connection on\/ $\varGamma \mathbin| V \tto V$ such that the induced connection on\/ $\varGamma \mathbin| V \cap Z \tto V \cap Z$ can be extended to a multiplicative connection\/ $\varPsi$ on\/ $\varGamma \mathbin| Z \tto Z$. Then, over some open neighborhood\/ $V'$ of\/ $C \cup Z$, there exists a multiplicative connection which induces\/ $\varPsi$ along\/ $Z$ and agrees with\/ $\varPhi$ over some open neighborhood of\/ $C$ within\/ $V \cap V'$. \end{thm}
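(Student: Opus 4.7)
I would deduce Theorem~\ref{prop:14A.5.2} from Theorem~\ref{thm:main} by constructing a single auxiliary connection $H$ on $\varGamma \tto M$ encoding both $\varPhi$ near $C$ and $\varPsi$ on $Z$, and then applying Theorem~\ref{thm:main} with $S$ a suitable invariant subset containing $C \cup Z$. First shrink $V$ to an open invariant neighborhood $U$ of $C$ with $\overline U \subset V$ (take the saturation of an open set whose closure lies inside $V$ and shrink as needed). Set $S = U \cup Z$, an invariant subset of $M$.

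The core step is to produce a $C^\infty$ connection $H$ on $\varGamma \tto M$ satisfying (a) $H = \varPhi$ on $\varGamma \mathbin| U$, (b) $H \mathbin| Z = \varPsi$, and (c) the effect $\lambda^H$ restricts to a representation on $\varGamma \mathbin| S$. By invariance of $U$ no arrow in $\varGamma$ can cross between $U$ and $M \setminus U$, so any composable pair $g_1$, $g_2 \in \varGamma \mathbin| S$ lies either entirely in $\varGamma \mathbin| U$---where (a) makes $\lambda^H = \lambda^\varPhi$ automatically a representation---or entirely in $\varGamma \mathbin| (Z \setminus U) \subset \varGamma \mathbin| Z$; hence (c) reduces to arranging that $\lambda^H$ restricts to a representation of $\varGamma \mathbin| Z$ on $\T M \mathbin| Z$ which moreover coincides with $\lambda^\varPhi$ over $\varGamma \mathbin| V \cap Z$. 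Once such an $H$ is in hand, Theorem~\ref{thm:main} (with this $S$ and any $B$ satisfying $\varGamma B \supset V''$ for the output neighborhood $V''$ of $\overline S$) produces a multiplicative connection $\varPhi'$ on $V'' \supset \overline S \supset C \cup Z$ with $\varPhi' \mathbin| W = H \mathbin| W$ for every invariant $C^\infty$ submanifold $W$ of $V''$ on which $H \mathbin| W$ is multiplicative (the condition $B \cap \varGamma W \subset W$ is trivial for invariant $W$). Applied to $W = Z$, this gives $\varPhi' \mathbin| Z = \varPsi$; applied to $W = U$ (open, hence a submanifold), it gives $\varPhi' \mathbin| U = \varPhi \mathbin| U$. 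Setting $V' = V''$ concludes.

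The main technical obstacle is arranging (c) on $\varGamma \mathbin| Z$ in directions transverse to $Z$: while (b) controls $\lambda^H$ on the subbundle $\T Z$, we need the full $\lambda^H$ on $\T M \mathbin| Z$ to compose correctly. Here I would invoke the canonical normal representation of $\varGamma \mathbin| Z$ on $N = \T M \mathbin| Z / \T Z$, which is intrinsically defined whenever $Z$ is invariant: for $g \in \varGamma \mathbin| Z$ any two right splittings $\T_{sg}M \to \T_g\varGamma$ of $\T_g s$ differ by a section of $\ker \T_g s$, whose $\T_g t$-image equals $\T_{tg}(\varGamma sg) \subset \T_{tg}Z$, so the effect modulo $\T Z$ is independent of the splitting. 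Averaging an arbitrary algebraic splitting over a Haar system on the proper groupoid $\varGamma \mathbin| Z$ yields a $\varGamma \mathbin| Z$-equivariant decomposition $\T M \mathbin| Z \simeq \T Z \oplus N$, hence a direct-sum representation $\lambda^\varPsi \oplus \lambda^N$ on $\T M \mathbin| Z$ extending $\lambda^\varPsi$. Over $\varGamma \mathbin| V \cap Z$ this is smoothly interpolated with $\lambda^\varPhi$ via an invariant bump function: the discrepancy between the two representations lives in the off-diagonal $\operatorname{Hom}(N, \T Z)$-block and is a 1-cocycle, and multiplication of a 1-cocycle by an invariant smooth function gives another 1-cocycle, so the interpolation stays a representation. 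The resulting target representation on $\varGamma \mathbin| Z$ is realized as the effect of a connection $H^Z$ on a $\varGamma$-invariant tubular neighborhood of $Z$ (available by properness) with $H^Z \mathbin| Z = \varPsi$. Finally, $H$ is assembled by blending $\varPhi$ on $U$, $H^Z$ near $Z$, and any fixed unital connection elsewhere through an invariant partition of unity engineered to preserve conditions (a) and (b) strictly.
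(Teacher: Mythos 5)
Your overall strategy is the paper's: build one auxiliary connection $H$ that equals $\varPhi$ near $C$ and induces $\varPsi$ along $Z$, arrange that $\lambda^H$ is a representation over an invariant set containing $C\cup Z$, and then feed $H$ into Theorem~\ref{thm:main}. However, there is a genuine gap at the very first step, and it propagates through the rest of the argument. You take $U$ to be an open \emph{invariant} neighborhood of $C$ with $\overline U\subset V$, obtained by saturating a smaller open set. For a general proper (not source-proper) Lie groupoid this object need not exist: the saturation $\varGamma W$ of an open set $W$ with $\overline W\subset V$ can escape $V$, because orbits of points near $C$ may be non-compact and leave $V$, and the orbit projection is not a closed map. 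The paper states this explicitly just before Lemma~\ref{lem:domain} (``$V$ need not contain any \emph{invariant} neighborhoods of $C$''), and that lemma, together with the auxiliary open set $B$ and the relative-invariance condition $B\cap\varGamma U\subset U$ built into the statement of Theorem~\ref{thm:main}, exists precisely to circumvent this failure. Your proof uses the invariance of $U$ essentially: to split composable pairs in $\varGamma\mathbin|(U\cup Z)$ into pairs lying wholly in $\varGamma\mathbin|U$ or wholly in $\varGamma\mathbin|Z$, to apply the ``agrees along invariant submanifolds'' clause of Theorem~\ref{thm:main} with $W=U$, and (implicitly) to produce the invariant bump function on $Z$ supported in $V\cap Z$ and equal to $1$ on $U\cap Z$ used in your cocycle interpolation. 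None of these steps survives if $U$ is merely open. The correct substitute is to take $S=C\cup Z$ (both invariant), use Lemma~\ref{lem:domain} to get a non-invariant $U$ with $B\cap\varGamma U\subset U$, and recover the agreement with $\varPhi$ near $C$ by applying the $B$-clause of Theorem~\ref{thm:main} to the locally invariant open set $W=V\cap\varGamma U$.

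Two secondary points. Your construction of the representation of $\varGamma\mathbin|Z$ on $\T M\mathbin|Z$ via the canonical normal representation and an off-diagonal $1$-cocycle interpolation is a legitimate alternative to the paper's route (which instead builds a metric invariant under $\lambda^\varPhi$ near $C$ and under $\lambda^\varPsi$ along $Z$ via Lemma~\ref{lem:metric}, and then corrects $H$ using the longitudinal bundle of $\varGamma\mathbin|Z\tto Z$); the cocycle computation you sketch is correct since the off-diagonal block squares to zero. But the final assembly should not invoke a ``$\varGamma$-invariant tubular neighborhood of $Z$,'' which is not an elementary consequence of properness; extending the prescribed data from $\varGamma\mathbin|Z$ and $\varGamma\mathbin|V$ to a global connection is a linear extension problem for sections of $L(s^*\T M,\ker\der s)$, solvable by an ordinary (non-invariant) partition of unity once $C\cup Z$ has been arranged to be closed, as in Step~1 of the paper's proof.
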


Notice that Problems \ref{prob:ext*} and \ref{prob:ext**} are both instances of Problem \ref{prob:ext} with $U$ \emph{invariant.} For such $U$, there is a method for the resolution of Problem \ref{prob:ext}, based on Theorem \ref{prop:14A.5.2}, which goes roughly as follows.

As our initial step, we try to apply Theorem \ref{prop:14A.5.2} to the situation where $C = \overline{U}$ and $Z = M_0$. It turns out that, perhaps at the expense of shrinking $V$ around $\overline{U}$ a little bit, it is always possible to find some multiplicative connection on $\varGamma \mathbin| M_0 \tto M_0$ which extends $\varPhi \mathbin| V \cap M_0$. Although a full justification of this assertion will become possible only in section \ref{sec:averaging} after we have discussed our averaging operator for groupoid connections and its basic properties (cf., to wit, our ``supplementary remarks'' at the end of subsection \ref{sub:proof2}), we can be slightly less mysterious if we are willing to make the simplifying assumption that $V$ contains some invariant open neighborhood of $\overline{U}$; this is often the case in practice and if not, it might still be possible to achieve this by shrinking $U$ itself a bit. If we assume this, our assertion descends from Theorem \ref{prop:12B.11.7} in a rather straightforward way:

\begin{cor}\label{thm:rank=0} Let\/ $\varGamma \tto M$ be a proper Lie groupoid. Suppose that it is regular of rank zero in other words that\/ $M_0 = M$. Let\/ $U$,\ $V$,\ and\/ $\varPhi$ be as in Problem\/ \textup{\ref{prob:ext}.} Assume that\/ $U$ is invariant. Then, there exists a multiplicative connection on\/ $\varGamma \tto M$ that extends\/ $\varPhi \mathbin| U$. Such an extension is unique up to smooth deformation through multiplicative connections that agree with\/ $\varPhi$ over\/ $U$. \end{cor}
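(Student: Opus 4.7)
The plan is to reduce the existence statement to a direct application of Theorem~\ref{prop:12B.11.7}. Concretely, the aim is to construct a unital connection $H$ defined on all of $\varGamma \tto M$ that extends $\varPhi \mathbin| U$ and whose effect $\lambda^H$ is a representation; Theorem~\ref{prop:12B.11.7} will then deliver a multiplicative connection on $\varGamma \tto M$ which, since the invariant open submanifold $U$ is one along which $H = \varPhi$ is already multiplicative, must agree with $\varPhi$ over $U$.

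For the construction of $H$ I would first invoke the properness of $\varGamma \tto M$ to produce an invariant open neighbourhood $W$ of $\overline{U}$ with $\overline{W} \subset V$, together with an invariant smooth cutoff $\chi \in C^\infty(M)$ equal to $1$ near $\overline{U}$ and supported inside $W$; such invariant neighbourhoods and invariant partitions of unity are standard devices in the proper setting. Choosing any auxiliary unital connection $H_0$ on $\varGamma \tto M$, the prescription $\eta^H = (\chi \circ s)\eta^\varPhi + ((1-\chi) \circ s)\eta^{H_0}$ is well defined on $\varGamma \mathbin| V$ (the invariance of $W$ guarantees that $s^{-1}(W) = \varGamma \mathbin| W \subset \varGamma \mathbin| V$), it agrees with $\eta^{H_0}$ outside $s^{-1}(\overline{W})$, and hence patches into a global unital connection $H$ on $\varGamma \tto M$ which coincides with $\varPhi$ on an open neighbourhood of $\overline{U}$.

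The crucial observation is that the rank-zero hypothesis forces $\lambda^H$ to be a representation regardless of how $H$ is chosen. Indeed, two unital splittings at an arrow $g$ differ by a map into $\ker \T_g s = \T_g\varGamma_{sg}$, and since the orbit $\varGamma \cdot sg$ is zero-dimensional the restriction of $\T_g t$ to $\T_g\varGamma_{sg}$ vanishes; hence $\lambda^H_g = \T_g t \circ \eta^H_g$ does not depend on the choice of $H$. Consequently it coincides pointwise with the effect of any locally defined multiplicative connection around $g$---and such local connections exist in abundance thanks to the linearization theorem for proper Lie groupoids at fixed points \cite{CS,Zung}---so $\lambda^H$ must be a representation. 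Theorem~\ref{prop:12B.11.7} then yields the desired multiplicative extension of $\varPhi \mathbin| U$.

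For the uniqueness-up-to-deformation claim, two multiplicative extensions $\varPhi_0$,~$\varPhi_1$ of $\varPhi \mathbin| U$ automatically share the same effect by the same rank-zero argument, so the convex combinations $\eta^{H_\epsilon} = (1-\epsilon)\eta^{\varPhi_0} + \epsilon\eta^{\varPhi_1}$ each satisfy identity \eqref{prop:12B.9.4} (as already noted in the paragraph immediately following Theorem~\ref{prop:12B.11.7}) and hence furnish a smooth one-parameter family of multiplicative connections which manifestly remains equal to $\varPhi$ on $U$ for every value of $\epsilon$. The main technical delicacy I foresee is the construction, in the first step, of the invariant neighbourhood $W$ and the invariant cutoff $\chi$ inside $V$, where the hypothesis that $U$ be invariant is essentially used; everything else follows cleanly from Theorem~\ref{prop:12B.11.7} and from observations already made in earlier parts of the paper.
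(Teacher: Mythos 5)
Your overall strategy---produce a global unital connection $H$ extending $\varPhi \mathbin| U$, argue that in rank zero its effect is automatically a representation, and then apply Theorem \ref{prop:12B.11.7}---is the paper's strategy, and your uniqueness argument is exactly the paper's. However, the step where you justify that $\lambda^H$ is a representation does not go through as written. You correctly note that $\lambda^H_g = \T_gt \circ \eta^H_g$ is independent of the choice of splitting $\eta^H_g$ of $\T_gs$ (because $\T_gt$ kills $\ker\T_gs$ in rank zero), but you then deduce multiplicativity of $\lambda^H$ by comparison with local multiplicative connections supplied by ``the linearization theorem at fixed points.'' Two problems: in a rank-zero groupoid the orbits are zero-dimensional but need not be singletons, so a generic base point is not a fixed point; and even granting a multiplicative connection near each point, the identity $\lambda^H_{g_1g_2} = \lambda^H_{g_1}\lambda^H_{g_2}$ requires a \emph{single} multiplicative connection on an open subgroupoid containing $g_1$, $g_2$, and $g_1g_2$ simultaneously, which pointwise non-invariant linearization does not provide when $sg_2$, $sg_1$, $tg_1$ are distinct points of one orbit. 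The paper explicitly cautions, immediately after this corollary, that the result does not follow from known linearization theorems in the obvious way. The detour is also unnecessary: from your own observation that $\T_gt \circ \eta_g$ is independent of the splitting $\eta_g$, the representation property is immediate, since $\eta^H_{g_1g_2}$ and $\eta^H_{g_1}\lambda^H_{g_2} \cdot \eta^H_{g_2}$ are both splittings of $\T_{g_1g_2}s$, and composing either with $\T_{g_1g_2}t$ yields respectively $\lambda^H_{g_1g_2}$ and $\lambda^H_{g_1}\lambda^H_{g_2}$. That one-line algebraic argument is what the paper uses.

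A second, smaller defect lies in your construction of $H$: you ask for an \emph{invariant} open neighborhood $W$ of $\overline{U}$ with $\overline{W} \subset V$ and an invariant cutoff supported in $W$. For a proper groupoid that is not source proper, $V$ need not contain any invariant neighborhood of $\overline{U}$; the principle you are invoking is stated in subsection \ref{sub:algorithm} only under the additional hypothesis of source properness, and the remark preceding Lemma \ref{lem:domain} warns that it fails in general. Fortunately invariance is not needed at this stage: an ordinary (non-invariant) cutoff equal to $1$ near $\overline{U}$ and supported in $V$ already yields, by convex combination of unital connections, a global unital $H$ with $H \mathbin| U = \varPhi \mathbin| U$. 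The invariance of $U$ enters only afterwards, when Theorem \ref{prop:12B.11.7} is invoked to guarantee that its output agrees with $H$ along the invariant submanifold $U$---a point you do identify correctly.
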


\begin{proof} $M_0 = M$ implies $\ker\der s = \ker\der t$, hence for any $g$ in $\varGamma$ the linear map $\T_gt \circ \eta_g$ is the same for all splittings $\eta_g$ of $\T_gs$. Let us consider an arbitrary connection $H$ on $\varGamma \tto M$. For every pair of arrows $g_1$,~$g_2$ with $sg_1 = tg_2$, both $\eta^H_{g_1g_2}$ and $\eta^H_{g_1}\lambda^H_{g_2} \cdot \eta^H_{g_2}$ are splittings of $\T_{g_1g_2}s$, thus $\lambda^H$ must be a representation, the same for all $H$. The existence of partitions of unity on $\varGamma$ implies that we can find some $H$ for which $H \mathbin| U = \varPhi \mathbin| U$. Theorem \ref{prop:12B.11.7} then says that there must be some globally defined multiplicative extension of $\varPhi \mathbin| U$, whose uniqueness up to smooth deformations follows by an argument identical to the argument that we used in the special case of Lie group bundles. \end{proof}

As far as we can tell, already this simple corollary to Theorem \ref{prop:12B.11.7} does not follow from any known linearization theorem, at least, not in the same obvious way it does in the special case of Lie group bundles: for the same argument to work, it would seem necessary to have \emph{invariant} linearizability, a property that not all $\varGamma \tto M$ of interest enjoy.

Back to our method, from the preceding considerations and Theorem \ref{prop:14A.5.2} it follows that we can always extend $\varPhi \mathbin| U$ to a multiplicative connection defined over an open neighborhood of $\overline{U} \cup M_0$. We proceed by induction on $r \geq 1$. Suppose that for some open neighborhood $V$ of the closed invariant set $C = \overline{U} \cup \bigcup_{q=0}^{r-1} M_q$ we have been able to find some prolongation of $\varPhi \mathbin| U$ by a multiplicative connection $\varPhi$ defined on $\varGamma \mathbin| V \tto V$. We may then try to see whether, perhaps at the expense of shrinking $V$ around $C$ a bit, we can extend $\varPhi \mathbin| V \cap M_r$ to a multiplicative connection on $\varGamma \mathbin| M_r \tto M_r$. This amounts to trying to solve a problem of type \ref{prob:ext} for the proper \emph{regular} groupoid $\varGamma \mathbin| M_r \tto M_r$. If successful, we can invoke Theorem \ref{prop:14A.5.2} with $Z = M_r$ in order to prolong $\varPhi \mathbin| U$ further over some open neighborhood of $C \cup Z = \overline{U} \cup \bigcup_{q=0}^r M_q$, thus completing the inductive step.

In essence, this is the content of the idea that, for proper Lie groupoids, the resolution of Problem \ref{prob:ext} can be ``reduced to the regular case.'' The reader, at this point, may be wondering what the practical relevance of such ``reduction to the regular case'' is. Another natural question that may cross the reader's mind is, to what extent do the arbitrary choices involved at each step of the above inductive method influence the outcome, or even the feasibility, of the method itself? Of these two questions the first one is easier to answer and will be addressed presently; consideration of the second question will be postponed to subsection \ref{sub:algorithm}.

\subsection{The extension problem in the regular case}\label{sub:regular}

Within the scope of the present discussion we assume that our Lie groupoid $\varGamma \tto M$, besides being proper, is also \emph{regular:} thus $M = M_r$ for some $r$. We refer to $r$, the common dimension of the orbits of $\varGamma \tto M$, as the \emph{rank} of $\varGamma \tto M$. Because of regularity, the vector bundle morphism $1^*\ker\der s \xto{1^*\der t} 1^*t^*\T M \simeq \T M$ has constant rank $r$. Its image is therefore a smooth subbundle $L$ of $\T M$ of rank $r$, which we refer to as the \emph{longitudinal bundle} of $\varGamma \tto M$. It is the vector distribution on $M$ spanned by all directions tangent to orbits.

From our discussion of the connections induced along invariant submanifolds as applied to orbits, it readily follows that the effect $\lambda^H = \der t \circ \eta^H$ of any groupoid connection $H$ on $\varGamma \tto M$ must carry the longitudinal distribution $L \subset \T M$ into itself. Upon restriction, it must therefore give rise to a pseudo-representation of $\varGamma \tto M$ on $L$, which we call the \emph{longitudinal effect} of $H$. The longitudinal effect of any multiplicative connection is an example of a \emph{longitudinal representation,} by which we mean a representation of $\varGamma \tto M$ on its own longitudinal bundle $L$. The relevance of the latter notion is illustrated by our next result, whose proof will be given in section \ref{sec:averaging}.

\begin{thm}\label{thm:regular} Let the Lie groupoid\/ $\varGamma \tto M$ be both proper and regular. Let\/ $H$ be any connection on\/ $\varGamma \tto M$ that is multiplicative along the open and invariant subset\/ $U$ of\/ $M$. Then, $H \mathbin| U$ is prolongable globally as a multiplicative connection if, and only if, its longitudinal effect extends to the whole\/ $\varGamma \tto M$ as a longitudinal representation. \end{thm}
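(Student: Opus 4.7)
The forward direction is essentially immediate: if $\varPhi$ is a multiplicative extension of $H \mathbin| U$ to all of $\varGamma \tto M$, then $\lambda^\varPhi$ is a representation on $\T M$, and since every connection-effect carries the longitudinal distribution $L$ into itself (as noted in subsection \ref{sub:regular}), its restriction $\lambda^\varPhi \mathbin| L$ is a longitudinal representation; this prolongs the longitudinal effect of $H \mathbin| U$ because $\varPhi \mathbin| U = H \mathbin| U$.

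For the converse, the plan is to reduce to Theorem \ref{prop:12B.11.7} by constructing a connection $H''$ on $\varGamma \tto M$ with $H'' \mathbin| U = H \mathbin| U$ whose full effect $\lambda^{H''}$ is a representation on all of $\varGamma$. Fix a smooth complement $N$ to $L$ in $\T M$ and a smooth splitting $\sigma: t^*L \to \ker\der s$ of the bundle surjection $\der t \mathbin| \ker\der s$, which is indeed surjective because $L_{tg}$ is by construction the image of $\der_g t \mathbin| \ker\der_g s$. As a first stage I would correct the longitudinal effect: the bundle map $\rho - \lambda^H \mathbin| L: s^*L \to t^*L$ vanishes on $\varGamma \mathbin| U$ by hypothesis, so $\alpha = \sigma \circ (\rho - \lambda^H \mathbin| L) \circ \mathrm{pr}_L: s^*\T M \to \ker\der s$ vanishes there too, and the connection $H'$ determined by $\eta^{H'} = \eta^H + \alpha$ satisfies $H' \mathbin| U = H \mathbin| U$ and has longitudinal effect $\rho$ everywhere.

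As a second stage I would correct the transverse off-diagonal cocycle. In block form with respect to $\T M = L \oplus N$ the effect $\lambda^{H'}$ has diagonal blocks $\rho$ and the canonical transverse representation $\bar\lambda$ of $\varGamma$ on $\T M / L \cong N$, together with an off-diagonal block $B: s^*N \to t^*L$; on $\varGamma \mathbin| U$ this $B$ is a $1$-cocycle in the representation $\mathrm{Hom}(N, L)$ because $H \mathbin| U$ is multiplicative. The goal is to find $B': s^*N \to t^*L$ on all of $\varGamma$ that agrees with $B$ on $\varGamma \mathbin| U$ and is a global $1$-cocycle; once such a $B'$ is in hand, the connection $H''$ obtained from $H'$ by adding $\sigma \circ (B' - B) \circ \mathrm{pr}_N$ to $\eta^{H'}$ satisfies $H'' \mathbin| U = H \mathbin| U$ and has $\lambda^{H''}$ a representation globally. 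Theorem \ref{prop:12B.11.7} applied to $H''$ then supplies a multiplicative $\varPhi$ with the same effect as $H''$ that agrees with $H''$ along every invariant submanifold on which $H''$ induces a multiplicative connection---in particular along $U$, since $H'' \mathbin| U = H \mathbin| U$ is multiplicative---producing the required global extension.

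The hard part is the cocycle extension in the second stage: it is equivalent to a relative-cohomology vanishing of the form $H^1(\varGamma, \varGamma \mathbin| U; \mathrm{Hom}(N, L)) = 0$ for the proper Lie groupoid $\varGamma$. I expect the proof in section \ref{sec:averaging} to realize this extension concretely through the averaging operator developed there---for instance, by prolonging $B$ arbitrarily as a bundle map $\tilde B$ on $\varGamma$ (via a partition of unity along the complement of $\overline{U}$) and then iteratively correcting its cocycle defect by averaging against a Haar system on the source fibers of $\varGamma$, with the corrections automatically vanishing over $\varGamma \mathbin| U$ because $\tilde B$ already satisfies the cocycle condition there, and with convergence of the iteration ensured by properness.
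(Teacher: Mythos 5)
Your forward direction and your first correction stage match the paper. The divergence, and the gap, is in your second stage: you arrive at an off-diagonal block $B: s^*N \to t^*L$ that is a $1$-cocycle only over $\varGamma \mathbin| U$, and you need to replace it by a \emph{global} $1$-cocycle agreeing with it there. You explicitly flag this relative-$H^1$ extension as ``the hard part'' and only conjecture that the averaging machinery of section \ref{sec:averaging} delivers it, proposing an iterative correction with convergence controlled by properness. As written this is not a proof; and the mechanism you guess at is not the right one (no iteration or convergence argument is needed in this linear, cohomological setting --- contrast with section \ref{sec:estimates}, where recursion is needed precisely because the problem there is not linear). For the record, your step is in fact fillable in one shot: extend $B$ arbitrarily to $\tilde B$, note that $\delta\tilde B$ is a $2$-cocycle (being a coboundary) for the genuine coefficient representation $X \mapsto \rho_g \circ X \circ \bar\lambda_g^{-1}$ vanishing on $(\varGamma\mathbin|U)_2$, and set $B' = \tilde B - \widehat{\delta\tilde B}$ with $\widehat{\phantom{Z}}$ the contracting homotopy $\hat Z(g) = \integral_{th=sg} c(sh)Z(g,h)\der h$; invariance of $U$ forces $\widehat{\delta\tilde B}$ to vanish on $\varGamma\mathbin|U$. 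But you did not supply this, and the burden of the proof sits exactly there.

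The paper takes a different and slicker route that makes the whole issue evaporate. Instead of an arbitrary complement $N$, it uses Lemma \ref{lem:metric} (properness) to produce a Riemannian metric invariant under $\lambda^H$ over $U$ and takes $N = L^\bot$. Two things then happen at once: the transverse block $\nu$ defined by \eqref{eqn:nu} is \emph{always} a representation, for any connection (the same splitting-independence argument as in Corollary \ref{thm:rank=0}); and, crucially, $\lambda^H$ preserves $L^\bot$ over $U$, so the off-diagonal block already vanishes on $\varGamma\mathbin|U$. The single correction $\eta^H + \xi \circ (\rho \circ s^*\pr - t^*\pr \circ \lambda^H)$ therefore simultaneously fixes the longitudinal block to $\rho$ and kills the off-diagonal block globally, while leaving $H$ untouched over $U$; the resulting effect is block-diagonal with representation blocks, hence a representation, with no cocycle-extension problem to solve. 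After that both arguments conclude identically via Theorem \ref{prop:12B.11.7} (or, in the paper, directly via the averaging operator with $\supp c \subset B$ and \eqref{lem:hat|Z}). In short: your architecture is sound, but the invariant metric is the missing idea that turns your unproven ``hard part'' into a triviality.
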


This result is substantially a corollary of Theorem \ref{prop:12B.11.7}. The conclusions of Corollary \ref{thm:rank=0} may be understood in the light of this result: when the rank is zero, the longitudinal bundle $L$ is the zero subbundle of $\T M$ and thus the extension problem for longitudinal representations can always be solved (trivially). Another interesting consequence of Theorem \ref{thm:regular} is the following. Suppose that the longitudinal bundle of $\varGamma \tto M$ is trivializable, i.e., there is an isomorphism $\tau: M \times \R^r \simto L$ of smooth vector bundles over $M$. Since $g \mapsto \tau_{tg} \circ \tau_{sg}^{-1}$ is a longitudinal representation of $\varGamma \tto M$, the theorem enables us to conclude that $\varGamma \tto M$ admits multiplicative connections. Every $\varGamma \tto M$ that is transitive over a parallelizable $M$ falls in particular within the scope of such conclusion. In fact, it can be shown quite independently of Theorem \ref{thm:regular} that, whether or not a given \emph{transitive} Lie groupoid $\varGamma \tto M$ is proper, every tangent representation $s^*\T M \simto t^*\T M$ of $\varGamma \tto M$ is the effect of some multiplicative connection; a concise account of this result---which, essentially, was already known to Ehresmann \cite[p.~43]{Ehr50}---is contained in the introduction of \cite{Tre8}. Our theorem may be viewed as a generalization of the latter result to the \emph{intransitive} case, valid under an additional assumption of properness which, as we shall see, cannot be dispensed with.

As suggested by our preceding remarks, the practical utility of Theorem \ref{thm:regular} lies in that it reduces the extension problem for multiplicative connections to a corresponding problem for longitudinal representations whose resolution is in many cases much simpler. In general, the point of view elaborated in \cite{Tre8} enables a reformulation of the extension problem for longitudinal representations as a \emph{standard} problem in (topological, equivariant) \emph{obstruction theory.} The usual methods and techniques of that theory are thus available to us for the study of our extension problem. We shall review a couple of simple illustrative applications of these methods in the course of the present subsection and the next.

We warn the reader that the close relationship between multiplicative connections and longitudinal representations highlighted in Theorem \ref{thm:regular} is specific to the \emph{proper} context; the conclusions of the theorem are normally false for regular groupoids that are not proper. Our next example shows that the properness hypothesis cannot even be dropped in the rank-zero case.

\begin{exmp}\label{npar:14A.3.4} Let $\theta$ be any nonnegative real-valued $C^\infty$ function on a smooth manifold $M$ and let $U$ be the open set on which $\theta > 0$. The quotient $\varGamma$ of the trivial Lie group bundle over $M$ with fiber the additive group of the reals $\R = (\R,+)$ by the subgroupoid
\begin{equation*}
	M \times \{0\} \cup \left\{\bigl(u,2\pi n/\theta(u)\bigr): u \in U\text{, }n \in \Z\right\}
\end{equation*}
can be turned into a Lie group bundle over $M$ uniquely in such a manner that the quotient projection $M \times \R \to \varGamma$ becomes a submersive homomorphism; this follows at once from remarks contained in \cite[appendix A]{Tre6} but it can also be verified directly without difficulty. The quotient projection is actually a local diffeomorphism, along which any connection on $\varGamma$ may be pulled back to a connection on $M \times \R$. Now $\varGamma \mathbin| U$ is a (trivial) bundle of circle groups over $U$ and thus as we saw in section \ref{sec:preliminaries}, Example (b), it admits exactly one multiplicative connection, whose pullback coincides with the vector distribution on $U \times \R$ tangent to the embedded submanifolds $u \mapsto \bigl(u,\epsilon/\theta(u)\bigr)$, $\epsilon$ in $\R$. But unless $U$ coincides with its own closure in $M$, no connection on $M \times \R$ can be an extension of the latter vector distribution. We conclude that for nearly every choice of $\theta$ there exist no multiplicative connections on $\varGamma$. \end{exmp}

Corollary \ref{thm:rank=0} gives us complete information about the resolution of Problem \ref{prob:ext} in the rank-zero case, provided $U$ is invariant. In the rank-one case, Theorem \ref{thm:regular} gives us similar information if we only add one simple, mildly restrictive, hypothesis on $\varGamma \tto M$:

\begin{cor}\label{thm:rank=1} Let\/ $\varGamma \tto M$ be a proper Lie groupoid which is regular of rank one in the sense that\/ $M_1 = M$. Suppose that it is\/ \emph{source connected} in the sense that its source fibers\/ $\varGamma_x = s^{-1}(x)$ are connected. Then, provided\/ $U$ is invariant, Problem\/ \textup{\ref{prob:ext}} always admits a solution, and this solution is unique up to smooth deformation through multiplicative connections extending\/ $\varPhi \mathbin| U$. \end{cor}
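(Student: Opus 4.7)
My plan is to invoke Theorem \ref{thm:regular} to convert both the existence and the uniqueness parts of the statement into analogous questions about longitudinal representations, which in the rank-one source-connected setting become tractable.

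First I would use partitions of unity on $\varGamma$ to extend $\varPhi \mathbin| U$ to a not-necessarily-multiplicative smooth connection $H$ defined on all of $\varGamma \tto M$; by construction $H$ restricts to $\varPhi$ on $\varGamma \mathbin| U$ and is therefore multiplicative along the invariant open set $U$. Theorem \ref{thm:regular} then says that $H \mathbin| U$ is globally prolongable to a multiplicative connection on $\varGamma \tto M$ if and only if the longitudinal effect $\lambda^H \mathbin| U$ extends from $L \mathbin| U$ to a longitudinal representation of $\varGamma \tto M$ on the rank-one longitudinal bundle $L$.

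For the existence of such an extension, source-connectedness of $\varGamma$ ensures that the values of any longitudinal representation lie in the identity component $\R_{>0}$ of $\GL(L_x) \cong \R^\times$ modulo at most a locally constant sign character, and this character is already fixed globally by the given representation on the invariant open set $\varGamma \mathbin| U$. Reducing to an oriented nowhere-zero section of $L$ compatible with this character (which exists, possibly after passing to the orientation double cover of $L$), the extension problem becomes that of extending a smooth positive multiplicative cocycle from $\varGamma \mathbin| U$ to a cocycle on all of $\varGamma$. Taking logarithms converts this into the extension of an additive $\R$-valued cocycle, which one can carry out by gluing against the trivial cocycle $0$ using an invariant partition-of-unity device of the kind whose construction forms the subject of Section \ref{sec:averaging}. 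Applying Theorem \ref{thm:regular} in the reverse direction then delivers the desired multiplicative extension of $\varPhi \mathbin| U$.

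For uniqueness up to smooth deformation, given two such extensions $\varPhi_0, \varPhi_1$, their longitudinal effects differ by a smooth positive multiplicative function on $\varGamma$ which equals $1$ on $\varGamma \mathbin| U$; scaling its logarithm linearly by $t \in [0, 1]$ produces a smooth path $\mu_t$ of longitudinal representations, each agreeing with $\lambda^{\varPhi_0}$ on $\varGamma \mathbin| U$. Regarding this path as the longitudinal effect of a connection on the product Lie groupoid $\varGamma \times \R \tto M \times \R$—which is again proper, regular of rank one, and source-connected—and appealing to the parametric version of Theorem \ref{prop:12B.11.7} indicated after Problem \ref{prob:ext**} lifts $\mu_t$ to a smooth deformation of $\varPhi_0$ into $\varPhi_1$ through multiplicative connections extending $\varPhi \mathbin| U$.

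The main obstacle is the extension of the longitudinal representation in the existence step, where the gluing must simultaneously preserve smoothness and the cocycle identity across the topological boundary of $U$. The argument depends essentially on source-connectedness, without which the isotropy could act non-trivially on $L$ through the discrete factor $\{\pm 1\}$ and thereby obstruct the orientation-propagation underpinning the reduction to positive cocycles, and on the rank-one hypothesis, without which $\GL(L_x)$ would be non-abelian and the logarithmic linearization of the extension problem would no longer be available.
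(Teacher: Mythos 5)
Your overall strategy---reduce to the extension/deformation problem for longitudinal representations via Theorem \ref{thm:regular}---is the paper's, and your uniqueness argument (interpolate the positive scalar cocycle $g \mapsto \lambda^{\varPhi_1}_g(\lambda^{\varPhi_0}_g)^{-1}$ by raising it to the power $t$, then work on $\varGamma \times \R \tto M \times \R$) is sound, except that the final appeal should be to Theorem \ref{thm:regular} for the product groupoid rather than to Theorem \ref{prop:12B.11.7}, whose hypothesis concerns the full effect on $\T M$ and is not what you have. The genuine gap is in the existence step. Your extension of the additive cocycle by ``gluing against the trivial cocycle $0$'' with an invariant cutoff $\chi$ supported in $U$ produces $\chi c$, which is indeed a global cocycle but agrees with $c$ only on the set where $\chi = 1$; since $U$ is open and not closed, no smooth $\chi$ with $\supp\chi \subset U$ can equal $1$ on all of $U$, so the resulting longitudinal representation does \emph{not} extend the longitudinal effect of $\varPhi \mathbin| U$, and Theorem \ref{thm:regular} then yields a multiplicative connection that fails to restrict to $\varPhi \mathbin| U$. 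Note also that you never use the collar $V \supset \overline{U}$: extending a cocycle from $\varGamma \mathbin| U$ to $\varGamma$ with no control on its behaviour near $\partial U$ is impossible in general (this is exactly the failure mode the paper's Example \ref{npar:14A.3.4} illustrates). Repairing your route requires an invariant cutoff that is $\equiv 1$ on a neighborhood of $\overline{U}$ and supported in $V$, whose existence for a proper but not source-proper groupoid is itself a nontrivial point; it also requires a global trivialization of $L$ to speak of scalar cocycles at all, and your parenthetical passage to the orientation double cover is not carried through (the descent of the extended representation back to $\varGamma \tto M$ is never addressed).

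The paper sidesteps all of this with a different mechanism: by Lemma \ref{lem:metric} applied over $V$ one chooses a metric on $L$ invariant under the longitudinal effect of $\varPhi$, extends that \emph{metric} (not the cocycle) arbitrarily to all of $M$, and observes that rank one plus source-connectedness force the existence of a \emph{unique} orthogonal longitudinal representation for any metric. Uniqueness then guarantees that this canonical representation restricts over $U$ to the longitudinal effect of $\varPhi \mathbin| U$, so no cocycle extension, trivialization of $L$, or invariant cutoff is ever needed. You would do well to replace your gluing step by this argument, or at least to reformulate your existence step so that the only object being extended across $\partial U$ is a vector bundle metric.
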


\begin{proof} Because of the rank-one hypothesis and the source-connectedness, for any choice of a vector bundle metric on $L$ there is exactly one longitudinal representation that is orthogonal for the chosen metric.%
\footnote{%
 The existence of this representation is slightly less obvious than it may at first seem: although its definition makes sense even for non source-connected $\varGamma \tto M$ as long as all orbits $\varGamma x$ are connected, the “representation” thus obtained need not be $C^\infty$-differentiable (not even continuous) unless further restrictions such as source-connectedness are imposed on $\varGamma \tto M$.
} %
Since by a standard construction any vector bundle carrying a representation of a proper Lie groupoid admits invariant metrics (cf.~Lemma \ref{lem:metric} below), we can always find a metric on $L \mathbin| V$ which is invariant under the longitudinal effect of $\varPhi$ and hence a metric on $L$ which, over $U$, is invariant under the longitudinal effect of $\varPhi \mathbin| U$. Now obviously $\varPhi \mathbin| U = H \mathbin| U$ for some globally defined $H$ so Theorem \ref{thm:regular} yields the existence of a solution to Problem \ref{prob:ext}. As to the uniqueness up to smooth deformations of such a solution, let us fix any $C^\infty$ function $c: \R \to [0,1]$ with $c(\epsilon) = 0$ for $\epsilon < 0$ and $c(\epsilon) = 1$ for $\epsilon > 1$. Given the two solutions $\varPhi_0$ and $\varPhi_1$, let us consider the connections $H_\epsilon$ defined by $\eta^{H_\epsilon} = \bigl(1 - c(\epsilon)\bigr)\eta^{\varPhi_0} + c(\epsilon)\eta^{\varPhi_1}$. Clearly, these satisfy $H_\epsilon = \varPhi_0$ for $\epsilon < 0$, $H_\epsilon = \varPhi_1$ for $\epsilon > 1$, and $H_\epsilon \mathbin| U = \varPhi \mathbin| U$ for all $\epsilon$. We may view them as a single connection $H$ on the proper, source-connected, rank-one regular groupoid $\varGamma \times \R \tto M \times \R$ which is multiplicative along the invariant open set $M \times (-\infty,0) \cup M \times (1,+\infty) \cup U \times \R$. It is practically obvious that on the longitudinal bundle of this groupoid there is some metric which, over the indicated open set, is invariant under the effect of $H$. We finish by invoking Theorem \ref{thm:regular} one more time. \end{proof}

The obstructions to the solvability of Problem \ref{prob:ext} become significantly more involved when we move on to the next higher rank, $r = 2$. Even so, we can describe them completely for a reasonably large class of groupoids. In order to introduce their discussion, we remind the reader that at any base point $x$ of $\varGamma \tto M$ we have the principal $\varGamma_x^x$ bundle $t: \varGamma_x \to \varGamma x$, where the isotropy group $\varGamma_x^x$ acts on the source fiber $\varGamma_x$ via right composition. When $\varGamma \tto M$ is source connected, the initial segment of the long exact sequence of homotopy groups associated with the pointed fibration $(\varGamma_x^x,1x) \to (\varGamma_x,1x) \xto{t} (\varGamma x,x)$ reads
\begin{equation}
\xymatrix@C=1.33em{%
 0 \ar[r] & \pi_2(\varGamma_x) \ar[r] & \pi_2(\varGamma x) \ar[r]^-{\partial_2} & \pi_1(\varGamma_x^x) \ar[r] & \pi_1(\varGamma_x) \ar[r] & \pi_1(\varGamma x) \ar[r]^-{\partial_1} & \pi_0(\varGamma_x^x) \ar[r] & \{\ast\}}
\label{eqn:long-exact}
\end{equation}
in view of the result, as applied to $\varGamma_x^x$, that the second homotopy group of a [compact] Lie group is always zero---see \cite[Chapter V, Proposition 7.5]{BtomD}. The set $\pi_0(\varGamma_x^x)$ of path components of $\varGamma_x^x$ is in fact a group, and the boundary map $\partial_1$ is a homomorphism of groups. As any fundamental group of a topological group, $\pi_1(\varGamma_x^x)$ is abelian. Let $\freepi_1$ denote the functor, from Lie groups to abelian groups, which to every Lie group assigns the corresponding fundamental group modulo torsion.

\begin{thm*}[\cite{Tre8}, p.~38] Let\/ $\varGamma \tto M$ be a Lie groupoid which is regular of rank two in the sense that\/ $M_2 = M$ as well as\/ \emph{source proper} in the sense that all compact sets in\/ $M$ have compact inverse image under the map\/ $s: \varGamma \to M$. Suppose that it is source connected and that its source fibers have finite fundamental groups. Further, suppose that for every\/ $x \in M$ the four conditions below concerning the long exact sequence of homotopy groups\/ \eqref{eqn:long-exact} are satisfied.
\begin{enumerate}
\def\labelenumi{\upshape (\arabic{enumi})}
 \item $\pi_2(\varGamma_x) = 0$.
 \item For every\/ $a \in \freepi_1(\varGamma_x^x)$ such that the intersection\/ $F_a$ of\/ $\Z a$ with the image mod torsion of the boundary homomorphism\/ $\pi_2(\varGamma x) \xto{\partial_2} \pi_1(\varGamma_x^x)$ is not zero, $F_a$ contains\/ $2a$.
 \item The boundary map\/ $\pi_1(\varGamma x) \xto{\partial_1} \pi_0(\varGamma_x^x)$ is injective, hence an isomorphism of groups.
 \item $\freepi_1(c_g) = -\id \in \Aut\bigl(\freepi_1(\varGamma_x^x)\bigr)$ for every\/ $g \in \varGamma_x^x \smallsetminus \idcomp{\varGamma_x^x}$, where\/ $c_g \in \Aut(\varGamma_x^x)$ denotes the conjugation homomorphism\/ $h \mapsto ghg^{-1}$ and\/ $\idcomp{\varGamma_x^x}$ denotes the identity component of\/ $\varGamma_x^x$.
\end{enumerate}
Then, the same conclusions hold as in Corollary\/ \textup{\ref{thm:rank=1}}. On the other hand, as soon as there exists a base point\/ $x$ at which one or more of these four conditions fails to be satisfied---all of the other hypotheses withstanding---no multiplicative connection can exist on\/ $\varGamma \tto M$. \end{thm*}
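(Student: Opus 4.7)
By Theorem \ref{thm:regular}, the problem of extending $\varPhi \mathbin| U$ to a multiplicative connection on $\varGamma \tto M$ is equivalent to that of extending the longitudinal effect of $\varPhi \mathbin| U$ to a longitudinal representation of $\varGamma \tto M$; I focus on the latter task. Since the orbits are compact by source properness, averaging any bundle metric on $L$ over the source fibres (Lemma \ref{lem:metric}) produces an invariant metric, for which the longitudinal effect of $\varPhi \mathbin| U$ is automatically orthogonal. The problem is therefore reduced to prolonging an equivariant $\mathrm{O}(2)$-reduction of the rank-two bundle $L$ from $U$ to $M$; this is the step that makes the obstruction-theoretic machinery of \cite{Tre8} applicable.

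Next, following \cite{Tre8}, I would reformulate the question as an equivariant lifting problem to the classifying space $B\mathrm{O}(2)$. Because $\pi_1\bigl(B\mathrm{O}(2)\bigr) = \Z/2$ and $\pi_2\bigl(B\mathrm{O}(2)\bigr) = \Z$, the Postnikov tower of $B\mathrm{O}(2)$ truncated in degree two produces a primary obstruction with coefficients in $\Z/2$ and a secondary one with coefficients in $\Z$. Under source properness, source connectedness, and finite $\pi_1(\varGamma_x)$, the equivariant cohomology of the classifying space of $\varGamma \tto M$ decomposes over orbits, and the contribution of an orbit $\varGamma x$ is controlled by the fibration $B\varGamma_x^x \to B\varGamma_x \to B\varGamma x$ whose homotopy exact sequence is \eqref{eqn:long-exact}.

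The hard part will be identifying each orbitwise obstruction with one of the four conditions. Condition (3) enforces that $\partial_1$ is an isomorphism, so that the non-identity component of $\varGamma_x^x$ has a unique candidate image in $\mathrm{O}(2) \smallsetminus \mathrm{SO}(2)$; combined with condition (4), which forces $\freepi_1(c_g) = -\id$---exactly how $\mathrm{O}(2) \smallsetminus \mathrm{SO}(2)$ acts by conjugation on $\pi_1\bigl(\mathrm{SO}(2)\bigr) = \Z$---this supplies a coherent candidate for the primary obstruction and causes it to vanish. Condition (1), $\pi_2(\varGamma_x) = 0$, removes a potential indeterminacy from the primary-to-secondary transition. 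Finally, condition (2), that $F_a$ contains $2a$ whenever it is non-zero, matches the vanishing of the $\Z$-valued secondary obstruction modulo the indeterminacy produced by the $-\id$ conjugation action; the factor of two here originates from the double cover $\mathrm{SO}(2) \to \mathrm{O}(2)$ and must be traced through carefully in tandem with condition (4). Conversely, if any of (1)--(4) fails at some $x$, the corresponding obstruction class is genuinely non-zero, so no multiplicative connection can exist.

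Uniqueness up to smooth deformation will then follow by applying the existence half of the theorem to the augmented groupoid $\varGamma \times \R \tto M \times \R$, whose orbit and isotropy data are pulled back from $\varGamma \tto M$ so that conditions (1)--(4) are inherited; a solution on this augmented groupoid that specializes to $\varPhi_0$ for $\epsilon < 0$ and to $\varPhi_1$ for $\epsilon > 1$ provides the desired smooth deformation, exactly as in the proof of Corollary \ref{thm:rank=1}.
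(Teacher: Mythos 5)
The first thing to note is that this paper does not actually prove the quoted theorem: it is imported from \cite{Tre8}, and the surrounding text only indicates that it is obtained ``by combining Theorem \ref{thm:regular} with the obstruction-theoretic analysis of longitudinal representations'' carried out in that reference. Your proposal correctly reproduces that two-stage strategy---reduce via Theorem \ref{thm:regular} (after manufacturing a metric invariant over $U$ with Lemma \ref{lem:metric}) to an extension problem for longitudinal representations, treat the latter by equivariant obstruction theory, and obtain uniqueness by running the existence argument on $\varGamma \times \R \tto M \times \R$ exactly as in the proof of Corollary \ref{thm:rank=1}. Up to that point you are aligned with what the paper says about where the theorem comes from.

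The gap is that everything after ``The hard part will be\dots'' is a plan rather than an argument, and it is precisely that part which constitutes the theorem. Neither the present paper nor your proposal establishes that the equivariant lifting problem decomposes orbitwise, that the contribution of an orbit is governed by \eqref{eqn:long-exact}, or---most importantly---that the primary and secondary obstructions vanish exactly when conditions (1)--(4) hold; each of these identifications is the content of the analysis in \cite{Tre8}. Two concrete warning signs that your proposed matching is not yet right: first, $\mathrm{SO}(2) \to \mathrm{O}(2)$ is an index-two subgroup inclusion, not a double cover, so the provenance you assign to the factor $2$ in condition (2) cannot be correct as stated; second, the paper records that conditions (3) and (4) are each implied by the conjunction of (1) and (2), which sits uneasily with a picture in which (3)--(4) independently control the vanishing of the primary obstruction while (1)--(2) control the secondary one. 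So your proposal should be read as a correct identification of the intended route, with the decisive obstruction-theoretic computation---the actual proof---still missing.
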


It turns out that in this theorem either of the last two conditions is implied by the conjunction of the first two and, thus, is actually redundant; although this can be deduced without much effort from the theory of \cite{Tre8}, we failed to realize it at the time when \cite{Tre8} was written. On the other hand the first two conditions are independent in the sense that examples can be provided satisfying either condition but not the other; we refer the interested reader to \cite[\S 6.3]{Tre8}. Of course, whenever the source fibers are simply connected, the second condition will be satisfied automatically. The theorem quoted above is a first illustration of the kind of results that can be obtained by combining Theorem \ref{thm:regular} with the obstruction-theoretic analysis of longitudinal representations which we alluded to earlier in this subsection. Another such illustration will be given towards the end of subsection \ref{sub:algorithm}.

\subsection{A resolution algorithm for the extension problem}\label{sub:algorithm}

In this subsection we give a more refined and systematic discussion of our method of reduction to the regular case, in particular, we clarify in what sense the answer to Problem \ref{prob:ext} provided by our method is independent of the arbitrary choices involved in the stepwise prolongation process described at the beginning of the section. In order to be able to do this without getting distracted by unenlightening technicalities, we shall henceforth and until the end of the section assume that $\varGamma \tto M$ is not only \emph{proper} but also \emph{source proper,} i.e., that $s^{-1}(K)$ is compact for every compact $K \subset M$. While this simplifying assumption is only mildly restrictive, it will enable us to express our ideas in a particularly neat and suggestive form. In practice, it will manifest itself as the following principle (whose proof is but a trivial exercise in point-set topology), of which we shall take advantage in the proof of Proposition \ref{prop:step}\spacefactor3000: \em Any open neighborhood of an invariant subset\/ $S$ of\/ $M$ contains an\/ \emph{invariant} open neighborhood\/ $W = \varGamma W \supset S$.\em

Let an \emph{invariant} open subset $U$ of $M$ be given, and let $\varPhi \mathbin| U$ be a multiplicative connection on $\varGamma \mathbin| U \tto U$ that is prolongable over some open neighborhood of $\overline{U}$. For $r = -1$,~$0$,~$1$,~$2$,~$\dotsc$ let us set $C_r = \overline{U} \cup \bigcup_{q=0}^r M_q$, in particular, $C_{-1} = \overline{U}$. For any two multiplicative connections $\varPhi$ on $\varGamma \mathbin| V \tto V$ and $\varPhi'$ on $\varGamma \mathbin| V' \tto V'$ defined over two open neighborhoods $V$ and $V'$ of $C_r$ and inducing the assigned connection $\varPhi \mathbin| U = \varPhi' \mathbin| U$ on $\varGamma \mathbin| U \tto U$, let us write $\varPhi \sim_r \varPhi'$ to say that for some open set $W \supset C_r$ with $W \subset V \cap V'$ there exists on $\varGamma \mathbin| W \tto W$ some $C^\infty$ one-parameter family of multiplicative connections $\varPhi_\epsilon$, $\epsilon$ in $\R$, such that $\varPhi_0 = \varPhi \mathbin| W$, $\varPhi_1 = \varPhi' \mathbin| W$, and $\varPhi_\epsilon \mathbin| U = \varPhi \mathbin| U$ for all $\epsilon$. The binary relation $\sim_r$ is an equivalence%
\footnote{%
 Given any one-parameter family $\varPhi_\epsilon$ as in the definition of $\sim_r$, we can always smoothly reparameterize it so as to make it locally constant near $0$ or $1$. This enables us to compose it with another similarly reparameterized family.
} %
on the set of all multiplicative connections $\varPhi$ that extend the assigned connection from $\varGamma \mathbin| U \tto U$ to the vicinity of $C_r$. Let $\mathfrak{P}_r$ denote the set of all $\sim_r$~equivalence classes $[\varPhi]_r$ of such $\varPhi$. For every $r$ we have the ``restriction'' map $\mathfrak{P}_{r+1} \to \mathfrak{P}_r$, $[\varPhi]_{r+1} \mapsto [\varPhi]_r$, which need not be surjective or injective. For $r \geq r_{\max}$ this map ``stabilizes,'' becoming the identity. The ``stable'' set $\mathfrak{P}_{\max} = \mathfrak{P}_r$ for $r \geq r_{\max}$ may be identified with the set of path-connected components of the space of global prolongations of $\varPhi \mathbin| U$. Problem \ref{prob:ext} requires us to find an effective algorithm for deciding whether $\mathfrak{P}_{\max}$ is empty or not.

We are going to describe one such algorithm which works on condition that we can actually solve Problem \ref{prob:ext} for each one of the \emph{regular} groupoids $\varGamma \mathbin| M_r \tto M_r$. Our strategy is as follows. Proceeding by induction on $r$, and in a highly non-canonical way, we are going to construct a tower of sets $\mathscr{P}_r$ covering the tower of all $\mathfrak{P}_r$, as in the next diagram. The $r$-th set $\mathscr{P}_r$ will consist of multiplicative connections extending $\varPhi \mathbin| U$, each one defined over a corresponding open neighborhood of $C_r$. The surjection $\mathscr{P}_r \onto \mathfrak{P}_r$ will simply assign each $\varPhi \in \mathscr{P}_r$ the corresponding class $[\varPhi]_r$.
\begin{equation}
 \begin{split}
\xymatrix{%
 \mathscr{P}_{-1}
 \ar@{->>}[d]
 &	\ar[l] \mathscr{P}_0
	\ar@{->>}[d]
	&	\ar[l] \dotsb
		&	\ar[l] \mathscr{P}_{r-1}
			\ar@{->>}[d]
			&	\ar[l] \mathscr{P}_r
				\ar@{->>}[d]
				&	\ar[l] \dotsb
					&	\ar[l] \mathscr{P}_{\max}
						\ar@{->>}[d]
\\ \mathfrak{P}_{-1}
 &	\ar[l] \mathfrak{P}_0
	&	\ar[l] \dotsb
		&	\ar[l] \mathfrak{P}_{r-1}
			&	\ar[l] \mathfrak{P}_r
				&	\ar[l] \dotsb
					&	\ar[l] \mathfrak{P}_{\max}
}\end{split}
\label{eqn:tower}
\end{equation}
Each $\mathscr{P}_r$ will be constructed in such a way that any two $\varPhi$,~$\varPhi' \in \mathscr{P}_r$ will be the same whenever they satisfy the following condition for every $q = 0$,~$\dotsc$,~$r$: there exists some $C^\infty$ one-parameter family of multiplicative connections $\varPsi_\epsilon$ on $\varGamma \mathbin| M_q \tto M_q$ such that $\varPsi_0 = \varPhi \mathbin| M_q$, $\varPsi_1 = \varPhi' \mathbin| M_q$, and $\varPsi_\epsilon \mathbin| M_q \cap W = \varPhi \mathbin| W \cap M_q$ for all $\epsilon$ for some open $W \supset C_{q-1}$ contained in the domain of definition of $\varPhi$. The point is to keep $\mathscr{P}_r$ within the limits of a ``reasonably small,'' ``computable,'' (though possibly redundant) random selection of representatives of the $\sim_r$~classes. We want the tower of the $\mathscr{P}_r$ to come as close as possible to that of the $\mathfrak{P}_r$, which is our true object of interest but is harder to determine explicitly. The obvious remark, that $\mathfrak{P}_{\max}$ will be nonempty iff so is $\mathscr{P}_{\max}$, means precisely that the choices which we eventually make while executing our algorithm (the specific $\mathscr{P}_r$ which we end up constructing) are completely irrelevant insofar as deciding whether the extension problem is solvable is concerned. As a side bonus, since $\mathscr{P}_{\max}$ will contain at least one representative for each path-connected component of the space of global prolongations of $\varPhi \mathbin| U$, we shall obtain an estimate of the number of ``inequivalent'' such prolongations.

According to the properties that we demand of our sets $\mathscr{P}_r$, we are allowed to throw no more than one element into $\mathscr{P}_{-1}$. This causes no conflict with our demanding that the map of $\mathscr{P}_{-1}$ to $\mathfrak{P}_{-1}$ be a surjection, thanks to Theorem \ref{thm:main}:

\begin{prop}\label{prop:base} $\mathfrak{P}_{-1} = \{\ast\}$ is a singleton, i.e., it contains exactly one element. \end{prop}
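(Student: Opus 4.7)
The non-emptiness of $\mathfrak{P}_{-1}$ is immediate from the standing hypothesis that $\varPhi \mathbin{|} U$ is prolongable over some open neighborhood of $\overline{U}$. The content of the proposition is the uniqueness statement: given any two multiplicative extensions $\varPhi$,~$\varPhi'$ of $\varPhi \mathbin{|} U$ defined on open neighborhoods $V$,~$V'$ of $\overline{U}$, one must have $\varPhi \sim_{-1} \varPhi'$. I plan to deduce this by constructing the required interpolating family as a single multiplicative connection on the product Lie groupoid $\tilde{\varGamma} = \varGamma \times \R \tto M \times \R$ (which is itself proper and source proper) via an application of Theorem~\ref{thm:main}.

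Using source-properness, I would first replace $V \cap V'$ with an invariant open neighborhood $W \supset \overline{U}$ contained in it. Fixing a smooth cutoff $c: \R \to [0,1]$ with $c \equiv 0$ on $(-\infty,0)$ and $c \equiv 1$ on $(1,+\infty)$, I would then define on $\varGamma \mathbin{|} W \times \R$ (and extend arbitrarily to all of $\tilde{\varGamma}$) the connection $\tilde{H}$ whose horizontal lift is
\begin{equation*}
	\eta^{\tilde{H}}_{g,\epsilon} = \bigl((1-c(\epsilon))\eta^{\varPhi}_g + c(\epsilon)\eta^{\varPhi'}_g\bigr) \times \id.
\end{equation*}
Because $\varPhi$ and $\varPhi'$ restrict to one and the same connection on $\varGamma \mathbin{|} U$, the effect of $\tilde{H}$ on the invariant set $\tilde{S} = U \times \R$ reduces to $\lambda^{\varPhi|U} \times \id$ and is therefore a representation. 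Theorem~\ref{thm:main}, applied to this $\tilde{H}$ and $\tilde{S}$, yields some open neighborhood $\tilde{V}$ of $\overline{U} \times \R$ in $M \times \R$; taking $B = \tilde{V}$, it furnishes a multiplicative connection $\tilde{\varPhi}$ on $\tilde{\varGamma} \mathbin{|} \tilde{V}$ with the property that $\tilde{\varPhi} \mathbin{|} Z = \tilde{H} \mathbin{|} Z$ for every locally invariant $C^\infty$ submanifold $Z \subset \tilde{V}$ on which $\tilde{H}$ already induces a multiplicative connection.

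To conclude, I would translate $\tilde{\varPhi}$ into a one-parameter family $\tilde{\varPhi}_\epsilon$ of multiplicative connections on $\varGamma$ defined over a \emph{single} open neighborhood of $\overline{U}$. Since the projection $M \times [-1,2] \to M$ is a closed map (as $[-1,2]$ is compact), there is an open $W^* \supset \overline{U}$ with $W^* \times [-1,2] \subset \tilde{V}$; shrinking to an invariant open neighborhood $W' \subset W \cap W^*$ of $\overline{U}$, the three subsets $Z_0 = U \times \R$, $Z_1 = W' \times (-1,0)$, and $Z_2 = W' \times (1,2)$ are locally invariant $C^\infty$ submanifolds of $\tilde{V}$ on which $\tilde{H}$ restricts respectively to $(\varPhi \mathbin{|} U) \times \id$, $(\varPhi \mathbin{|} W') \times \id$, and $(\varPhi' \mathbin{|} W') \times \id$ (using $c \equiv 0$ on $(-1,0)$ and $c \equiv 1$ on $(1,2)$), each of which is multiplicative. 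The conclusion of Theorem~\ref{thm:main} therefore forces the induced family $\tilde{\varPhi}_\epsilon$ on $\varGamma \mathbin{|} W'$ to equal $\varPhi \mathbin{|} W'$ for $\epsilon \in (-1,0)$, to equal $\varPhi' \mathbin{|} W'$ for $\epsilon \in (1,2)$, and to restrict to $\varPhi \mathbin{|} U$ over $U$ for every $\epsilon$. Continuing the family smoothly to all $\epsilon \in \R$ by the constant values $\varPhi \mathbin{|} W'$ and $\varPhi' \mathbin{|} W'$ outside $[-1,2]$ then produces the $C^\infty$ one-parameter deformation witnessing $\varPhi \sim_{-1} \varPhi'$. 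The main obstacle is precisely this passage from $\tilde{\varPhi}$, whose domain $\tilde{V}$ is not controlled, to a family over a common open $W'$; it is handled via the tube lemma on the bounded parameter interval $[-1,2]$, the boundedness being essential and, at the same time, sufficient thanks to the trivial continuation of the family outside this interval.
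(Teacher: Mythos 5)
Your argument is in essence the paper's own: both reduce the uniqueness claim to a single application of Theorem~\ref{thm:main} on the product groupoid $\varGamma \times \R \tto M \times \R$ with a convex interpolation between $\varPhi$ and $\varPhi'$, followed by a tube-lemma extraction of a common neighborhood of $\overline{U}$. The only real divergence is the choice of invariant set fed to Theorem~\ref{thm:main}: the paper first normalizes $V = V' = M$ (so the interpolation is global) and takes $S = M \times (-\infty,0) \cup M \times (1,+\infty) \cup U \times \R$, whence the neighborhood of $\overline{S}$ already contains the two end slabs and the endpoint identities follow from the single choice $Z = S$; you take the smaller $\tilde{S} = U \times \R$ and recover the endpoints from the auxiliary submanifolds $Z_1$, $Z_2$. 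That route works, but it has one point you must tighten: the conclusion of Theorem~\ref{thm:main} applies to $Z_1$, $Z_2$ only if the \emph{globally extended} $\tilde{H}$ still agrees with the interpolation formula there, and these sets are determined only after $\tilde{V}$ (hence $W^*$) is known, so ``extend arbitrarily'' is not quite enough. The fix is immediate: arrange the extension to coincide with the interpolation on a full tube $W_0 \times \R$ with $\overline{U} \subset W_0 \subset \overline{W_0} \subset W$ (e.g.\ by weighting with $\chi \circ s$ for a cutoff $\chi$ equal to $1$ on $W_0$ and supported in $W$), and then take $W' \subset W_0 \cap W^*$. With that adjustment, and noting as you do that the standing source-properness supplies the invariant $W'$ needed for the relative-invariance condition $\tilde{V} \cap \tilde{\varGamma}Z_i \subset Z_i$, your proof is complete.
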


\begin{proof} By our hypotheses on $\varPhi \mathbin| U$, the set $\mathfrak{P}_{-1}$ is nonempty. Let $\varPhi$ on $\varGamma \mathbin| V \tto V$ and $\varPhi'$ on $\varGamma \mathbin| V' \tto V'$ be such that $\varPhi \mathbin| U = \varPhi' \mathbin| U$, where $V$ and $V'$ are open and $V \cap V' \supset \overline{U}$. We need to prove that $\varPhi \sim_{-1} \varPhi'$, for which purpose it is not restrictive to assume that $V = V' = M$. We apply Theorem \ref{thm:main} to any connection $H$ on $\varGamma \mathbin| M \times \R \tto M \times \R$ inducing the obvious multiplicative connection along the invariant open set $S = M \times (-\infty,0) \cup M \times (1,+\infty) \cup U \times \R$. Any open neighborhood of the closure of $S$ in $M \times \R$ will contain a ``tube'' of the form $W \times \R$, where $W$ is an open subset of $M$ containing $\overline{U}$. \end{proof}

Inductively, suppose we have constructed $\mathscr{P}_{r-1}$ for some $r \geq 0$. Each $\varPhi \in \mathscr{P}_{r-1}$ determines a corresponding partial equivalence%
\footnote{%
 A symmetric and transitive, but possibly not reflexive, binary relation.
} %
on the set of all multiplicative connections on $\varGamma \mathbin| M_r \tto M_r$: for any two such connections $\varPsi$ and $\varPsi'$, let $\varPsi \sim_\varPhi \varPsi'$ signify that, for some open set $W \supset C_{r-1}$ over which $\varPhi$ is defined, some $C^\infty$ one-parameter family of multiplicative connections $\varPsi_\epsilon$ can be found on $\varGamma \mathbin| M_r \tto M_r$ satisfying $\varPsi_0 = \varPsi$, $\varPsi_1 = \varPsi'$, and $\varPsi_\epsilon \mathbin| M_r \cap W = \varPhi \mathbin| W \cap M_r$ for all $\epsilon$; in particular, $\varPsi \sim_\varPhi \varPsi$ is equivalent to saying that for some $W$ having the properties just specified $\varPsi$ prolongs $\varPhi \mathbin| W \cap M_r$.

\begin{prop}\label{prop:step} There exists a canonical map from the set of\/ $\sim_\varPhi$~classes to\/ $\mathfrak{P}_r$ whose image coincides with the inverse image of the class\/ $[\varPhi]_{r-1}$ under the restriction map\/ $\mathfrak{P}_r \to \mathfrak{P}_{r-1}$. \end{prop}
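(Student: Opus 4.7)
The strategy is to construct the canonical map by applying Theorem \ref{prop:14A.5.2} to each representative, then to verify its well-definedness and the image characterization by applying the same theorem to the proper, source-proper groupoid $\varGamma \times \R \tto M \times \R$, making use of the correspondence between smooth one-parameter families of multiplicative connections on $\varGamma \tto M$ and multiplicative connections on $\varGamma \times \R \tto M \times \R$ recalled after Problem \ref{prob:ext}. For a representative $\varPsi$ of a $\sim_\varPhi$-class, the reflexivity $\varPsi \sim_\varPhi \varPsi$ produces an open $W \supset C_{r-1}$ within the domain $V$ of $\varPhi$ on which $\varPsi$ extends $\varPhi$; after shrinking $V$ to $W$, Theorem \ref{prop:14A.5.2} with $C := C_{r-1}$ and $Z := M_r$ yields a multiplicative connection $\tilde\varPhi$ on an open neighborhood $V_{\tilde\varPhi}$ of $C_{r-1} \cup M_r = C_r$ inducing $\varPsi$ on $M_r$ and agreeing with $\varPhi$ near $C_{r-1}$. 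Since $U \subset C_{r-1}$, this $\tilde\varPhi$ extends $\varPhi \mathbin| U$, and its class $[\tilde\varPhi]_r \in \mathfrak{P}_r$ lies above $[\varPhi]_{r-1}$.

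To verify well-definedness, suppose $\varPsi \sim_\varPhi \varPsi'$ through a family $\varPsi_\epsilon$ satisfying $\varPsi_\epsilon \mathbin| W_0 \cap M_r = \varPhi \mathbin| W_0 \cap M_r$ for some open $W_0 \supset C_{r-1}$, and let $\tilde\varPhi, \tilde\varPhi'$ be the corresponding outputs of the construction. Smoothly reparameterize so that $\varPsi_\epsilon = \varPsi$ for $\epsilon \leq 1/3$ and $\varPsi_\epsilon = \varPsi'$ for $\epsilon \geq 2/3$, and choose an open invariant neighborhood $W''$ of $C_{r-1}$ inside $W_0 \cap V_{\tilde\varPhi} \cap V_{\tilde\varPhi'}$ on which $\tilde\varPhi$, $\tilde\varPhi'$, and $\varPhi$ all coincide (source-properness principle cited before Proposition \ref{prop:base}). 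The three constant families $\tilde\varPhi, \tilde\varPhi', \varPhi$ fit together into a consistent multiplicative connection $\Theta$ on
\[
V_\Theta := \bigl(V_{\tilde\varPhi} \times (-\infty, 1/3)\bigr) \cup \bigl(V_{\tilde\varPhi'} \times (2/3, \infty)\bigr) \cup (W'' \times \R) \subset M \times \R,
\]
whose restriction to $V_\Theta \cap (M_r \times \R)$ coincides with the globally defined multiplicative connection $\varPsi_\epsilon$ on $\varGamma \mathbin| M_r \times \R$. Applying Theorem \ref{prop:14A.5.2} to $\varGamma \times \R \tto M \times \R$ with $C := C_{r-1} \times \R$, $Z := M_r \times \R$, $\varPhi := \Theta$, and $\varPsi := \varPsi_\epsilon$ produces a multiplicative connection on an open neighborhood of $C_r \times \R$ in $M \times \R$ which, after extraction of a tube $W \times \R$ with $W \supset C_r$ open in $M$, reads as a smooth family of multiplicative connections on $\varGamma \mathbin| W$ witnessing $\tilde\varPhi \sim_r \tilde\varPhi'$.

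For the image characterization, one inclusion is immediate. For the converse, let $\varPhi'$ on $V' \supset C_r$ satisfy $[\varPhi']_{r-1} = [\varPhi]_{r-1}$, and fix a smooth deformation $\varPhi_\epsilon$ on $\varGamma \mathbin| W''$ from $\varPhi'$ at $\epsilon = 0$ to $\varPhi$ at $\epsilon = 1$ rel $\varPhi \mathbin| U$, where $W''$ is an open invariant neighborhood of $C_{r-1}$ in $V \cap V'$ and, after reparameterization, $\varPhi_\epsilon = \varPhi'$ for $\epsilon \leq 1/3$ and $\varPhi_\epsilon = \varPhi$ for $\epsilon \geq 2/3$. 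Using invariant cut-off functions (standard for proper source-proper groupoids via averaging), pick a smooth invariant $c : M \to [0, 1]$ equal to $1$ on some open $W_1 \supset C_{r-1}$ and with $\{c > 0\}$ inside $W''$. The warping formula
\[
\eta^{\tilde\varPhi'}_g := \begin{cases} \eta^{\varPhi_{c(sg)}}_g & \text{if } c(sg) > 0, \\ \eta^{\varPhi'}_g & \text{if } c(sg) = 0, \end{cases}
\]
defines a multiplicative connection $\tilde\varPhi'$ on $\varGamma \mathbin| V'$: the invariance $c \circ s = c \circ t$ forces all three arrows entering identity \eqref{prop:12B.9.4} to share the same $\epsilon$-slice $\varPhi_{c(sg_2)}$, so the multiplicative identity for each $\varPhi_\epsilon$ transfers to $\tilde\varPhi'$, while smoothness at $\{c = 0\}$ follows from $\varPhi_\epsilon = \varPhi'$ for $\epsilon$ near $0$. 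By construction $\tilde\varPhi' = \varPhi$ on $\varGamma \mathbin| W_1$ and $\tilde\varPhi' = \varPhi'$ on $\varGamma \mathbin| V' \setminus \varGamma \mathbin| W''$, and the family $\eta^{\tilde\varPhi'_\tau}_g := \eta^{\varPhi_{\tau c(sg)}}_g$ ($\tau \in [0, 1]$) witnesses $\tilde\varPhi' \sim_r \varPhi'$ rel $\varPhi \mathbin| U$. Setting $\varPsi := \tilde\varPhi' \mathbin| M_r$, reflexivity holds with $W = W_1$, and $\tilde\varPhi'$ itself satisfies the conclusions of Theorem \ref{prop:14A.5.2} for the pair $(\varPhi, \varPsi)$; by the well-definedness argument, the canonical map therefore sends $[\varPsi]_\varPhi$ to $[\tilde\varPhi']_r = [\varPhi']_r$.

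The main technical obstacle is the tube extraction invoked in the well-definedness step: a general open neighborhood of the invariant set $C_r \times \R$ in $M \times \R$ need not be of the form $W \times \R$ with $W \subset M$ open. Because by construction the relevant family data are constant in $\epsilon$ outside the compact interval $[0, 1]$, the problem reduces to producing a tube $W \times [-1, 2]$ inside the given neighborhood, which is handled by combining the source-properness principle cited at the beginning of the subsection with routine compactness arguments, followed by smooth extension by constancy beyond $[-1, 2]$ to complete the family on $\varGamma \mathbin| W \times \R$.
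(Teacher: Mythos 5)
Your argument is correct and follows the paper's proof in all essentials: the canonical map is built by applying Theorem \ref{prop:14A.5.2} with $C = C_{r-1}$ and $Z = M_r$, well-definedness is checked by applying the same theorem to the product with the unit groupoid over $\R$ and extracting a tube over a compact parameter interval, and surjectivity onto the fibre of $[\varPhi]_{r-1}$ is obtained by warping a deformation $\varPhi_\epsilon$ through an invariant cut-off function. The only (harmless) deviation is in that last step, where you warp the ambient connections directly to produce $\tilde\varPhi'$ on all of $\varGamma \mathbin| V'$, whereas the paper warps only the induced connections on $\varGamma \mathbin| M_r$ and then invokes the parametric form of Theorem \ref{prop:14A.5.2} once more; your variant saves that extra application and is equally valid, since the invariance of $c$ forces $c \circ s$ to take a single common value on the members and product of any composable pair, so that identity \eqref{prop:12B.9.4} for each fixed $\varPhi_\epsilon$ transfers to the warped connection.
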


\begin{proof} Whenever $\varPsi \sim_\varPhi \varPsi$ lies in a $\sim_\varPhi$~class, Theorem \ref{prop:14A.5.2} tells us that there has to be some multiplicative connection $\varPhi'$ defined around $C_r$ which agrees with $\varPhi$ near $C_{r-1}$ and induces $\varPsi$ along $M_r$. Suppose that $\varPsi \sim_\varPhi \varPsi'$, and let $\varPhi''$ bear to $\varPhi$,\ $\varPsi'$ the same relationship that $\varPhi'$ bears to $\varPhi$,\ $\varPsi$. We contend that $[\varPhi']_r = [\varPhi'']_r$: this will give us a well-defined map, $[\varPsi]_\varPhi \mapsto [\varPhi']_r$, from $\sim_\varPhi$~classes to $\sim_r$~classes. In order to prove our contention, we may suppose that $\varPhi'$ and $\varPhi''$ are both defined over $V'$, and that they agree with $\varPhi$ over a smaller $W$ having the properties specified in the definition of $\varPsi \sim_\varPhi \varPsi'$. For any family $\varPsi_\epsilon$ as in that definition, we apply Theorem \ref{prop:14A.5.2} to the groupoid $\varGamma \mathbin| V' \times \R \tto V' \times \R$ and to the obvious partial multiplicative connections defined over $V' \times (-\infty,\frac{1}{3}) \cup V' \times (\frac{2}{3},+\infty) \cup W \times \R$ and along $M_r \times \R$, and finish by taking a suitable open ``tube'' around $C_r \times \R$.

Let $\varPhi'$ extend $\varPhi \mathbin| U$ around $C_r$ and satisfy $[\varPhi']_{r-1} = [\varPhi]_{r-1}$: over some open $W \supset C_{r-1}$, which by \emph{source-properness} we may assume \emph{invariant,} we have $\varPhi_\epsilon$ such that $\varPhi_0 = \varPhi' \mathbin| W$, $\varPhi_1 = \varPhi \mathbin| W$, and $\varPhi_\epsilon \mathbin| U = \varPhi \mathbin| U$ for all $\epsilon$. Our goal is to construct $\varPhi''$ around $C_r$ agreeing with $\varPhi$ near $C_{r-1}$ and satisfying $[\varPhi']_r = [\varPhi'']_r$. Let us pick any \emph{invariant} $C^\infty$ function $c: M \to [0,1]$ with $\supp c \subset W$ and $c = 1$ near $C_{r-1}$; such functions exist by \emph{properness,} and can be constructed by averaging out any non-invariant $C^\infty$ function with the same properties (cf.~section \ref{sec:averaging} below). For all $\epsilon$ in $\R$ and all $g$ in $\varGamma \mathbin| M_r$ let us set
\begin{equation*}
	\eta^{\varPsi_\epsilon}_g =%
\begin{cases}
	\eta^{\varPhi_{c(sg)\epsilon}}_g \mathbin| \T_{sg}M_r &\text{for $g \in \varGamma \mathbin| W$,}
\\	\eta^{\varPhi'|M_r}_g                                 &\text{for $g \notin \varGamma \mathbin| \supp c$.}
\end{cases}
\end{equation*}
Then $\varPsi_1 \sim_\varPhi \varPsi_1$ and, for every $\epsilon$, $\varPsi_\epsilon$ matches $\varPhi_\epsilon$ along the interior of the invariant neighborhood of $C_{r-1}$ where $c = 1$. Now, by Theorem \ref{prop:14A.5.2}, we can as in the previous paragraph construct a $C^\infty$ one-parameter family of multiplicative connections $\varPhi'_\epsilon$ defined around $C_r$ such that $\varPhi'_\epsilon$ agrees with $\varPhi_\epsilon$ near $C_{r-1}$ and induces $\varPsi_\epsilon$ along $M_r$. \end{proof}

The preceding proposition remains valid, exactly as stated, for $\varGamma \tto M$ an arbitrary proper Lie groupoid (not necessarily source proper). Unfortunately, in such generality its proof relies on a homotopy extension theorem for representations of proper Lie groupoids whose discussion would take up much more space than we can possibly afford to spend here. For this and other reasons we think it is preferable to deal with the general case elsewhere.

Determining the set of $\sim_\varPhi$~classes explicitly for each $\varPhi \in \mathscr{P}_{r-1}$ amounts to solving a number of problems of type \ref{prob:ext}--\ref{prob:ext**} for the \emph{regular} groupoid $\varGamma \mathbin| M_r \tto M_r$, a task which we shall without further analysis pretend we can actually carry out.

For each pair $\varPhi$,~$[\varPsi]_\varPhi$ consisting of an element $\varPhi \in \mathscr{P}_{r-1}$ and a $\sim_\varPhi$~class $[\varPsi]_\varPhi$ let us randomly pick one representative $\varPhi'$ of the $\sim_r$~class corresponding to $[\varPsi]_\varPhi$ under the canonical map of Proposition \ref{prop:step} from among those which agree with $\varPhi$ over an open neighborhood of $C_{r-1}$. Let us declare $\mathscr{P}_r$ to be formed by all such arbitrarily chosen representatives $\varPhi'$, one for each pair $\varPhi$,~$[\varPsi]_\varPhi$. The map $\mathscr{P}_r \to \mathfrak{P}_r$, $\varPhi' \mapsto [\varPhi']_r$ is surjective by the inductively postulated surjectivity of $\mathscr{P}_{r-1} \to \mathfrak{P}_{r-1}$ and by the proposition. The $\varPhi$ in $\mathscr{P}_{r-1}$ which $\varPhi' \in \mathscr{P}_r$ came from is supposed to be inductively determined by the induced connections $\varPhi' \mathbin| M_q = \varPhi \mathbin| M_q$, $q < r$. This yields a well-defined map $\mathscr{P}_r \to \mathscr{P}_{r-1}$ covering $\mathfrak{P}_r \to \mathfrak{P}_{r-1}$, as in \eqref{eqn:tower}. Our remaining demands on $\mathscr{P}_r$ are satisfied by construction. This finishes the induction. The description of our algorithm is complete.

A few theorems about prolongation and deformation of multiplicative connections can now be obtained in an obvious way by combining our algorithm with the results described in subsection \ref{sub:regular}. It turns out that the simplest such theorem, asserting that $\mathfrak{P}_r = \{\ast\}$ for $r = 0$, is valid for all proper Lie groupoids, not only for source-proper ones, and that the similar theorem for $r = 1$ is valid for all those proper Lie groupoids whose $s$-fibers are connected; we omit the proofs, which we consider too specialized for this paper (but see our ``supplementary remarks'' at the end of subsection \ref{sub:proof2}). Apart from these theorems, the following (far less obvious) application of both our algorithm and the obstruction theoretic methods of \cite{Tre8} is worth mentioning; recall that the \emph{orbit space} $M/\varGamma$ of $\varGamma \tto M$ is the topological quotient of $M$ which results from declaring two points equal whenever they lie on the same orbit.

\begin{thm*}[\cite{Tre8}, p.~41] Let\/ $\varGamma \tto M$ be a source-proper Lie groupoid which has the property that\/ $\dim\varGamma x \leq 2$ for all\/ $x$ in\/ $M$. Suppose that it is source connected, and that its orbit space\/ $M/\varGamma$ is connected. Then, provided\/ $\dim\varGamma x \leq 1$ for at least one\/ $x$, any two multiplicative connections on\/ $\varGamma \tto M$ can be deformed smoothly into each other through multiplicative connections. \end{thm*}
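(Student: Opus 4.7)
The plan is to recast the deformation problem as an extension problem and then apply the reduction-to-the-regular-case algorithm of subsection~\ref{sub:algorithm}. Form the source-proper Lie groupoid $\varGamma \times \R \tto M \times \R$, set $U = M \times \bigl((-\infty,0) \cup (1,+\infty)\bigr)$, and let $\varPhi \mathbin| U$ be the obvious multiplicative connection determined by $\varPhi_0$ on $M \times (-\infty,0)$ and by $\varPhi_1$ on $M \times (1,+\infty)$. By the parametric-family discussion at the end of subsection~\ref{sub:problem}, a global multiplicative extension of $\varPhi \mathbin| U$ to $\varGamma \times \R$ is exactly a smooth one-parameter deformation of $\varPhi_0$ into $\varPhi_1$ through multiplicative connections on $\varGamma \tto M$. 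Since $\varPhi \mathbin| U$ obviously extends over a neighborhood of $\overline{U}$, the algorithm applies, and the existence of the sought deformation is equivalent to $\mathscr{P}_{\max} \neq \emptyset$; it thus suffices to carry the algorithm successfully through to the end.

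The orbits of $\varGamma \times \R$ have the same dimensions as those of $\varGamma$, so the stratification is $(M \times \R)_r = M_r \times \R$; source-properness, source-connectedness and connectedness of the orbit space all descend to $\varGamma \times \R$; and the algorithm terminates at $r = 2$. By Proposition~\ref{prop:base}, $\mathscr{P}_{-1}$ is nonempty. Applying Corollary~\ref{thm:rank=0} to the proper regular rank-zero groupoid $(\varGamma \times \R) \mathbin| M_0 \times \R$ handles the transition $\mathscr{P}_{-1} \to \mathscr{P}_0$. Because $M_1$ is invariant, the source fibers of $\varGamma \mathbin| M_1$ at $x \in M_1$ coincide with $\varGamma_x$, so the proper regular rank-one groupoid $(\varGamma \times \R) \mathbin| M_1 \times \R$ inherits source-connectedness, and Corollary~\ref{thm:rank=1} supplies the transition $\mathscr{P}_0 \to \mathscr{P}_1$.

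Everything reduces to producing at least one $\sim_\varPhi$-class for the proper regular rank-two groupoid $(\varGamma \times \R) \mathbin| M_2 \times \R$, which by Theorem~\ref{thm:regular} amounts to extending the longitudinal effect of the reference connection from an invariant neighborhood of the boundary of $M_2 \times \R$ to a longitudinal representation on the whole stratum. (If $M_2 = \emptyset$, the algorithm already finishes at step one.) Here the remaining hypotheses enter as follows. The hypothesis $\dim\varGamma x \leq 1$ for some $x$ ensures $M \smallsetminus M_2 \neq \emptyset$; were $M_2$ closed, hence clopen, in $M$, then $M/\varGamma$ would split as $M_2/\varGamma \sqcup (M \smallsetminus M_2)/\varGamma$, contradicting connectedness of the orbit space. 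Therefore $\partial M_2 \subset M_0 \cup M_1$ is nonempty, and the longitudinal representation we wish to extend is prescribed on a genuine, nonempty, boundary.

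The main technical obstacle is the vanishing of the resulting relative extension obstructions. Following the framework of \cite{Tre8}, the problem translates into an equivariant obstruction-theoretic question on the orbit-space pair $(M_2/\varGamma,\partial(M_2/\varGamma))$ crossed with $\R$, with coefficients in the sheaves of path components and of torsion-free fundamental groups of the isotropy groups $\varGamma_x^x$ that enter the long exact sequence~\eqref{eqn:long-exact}. These sheaves contribute obstructions only in degrees 1 and 2, and because \emph{relative} cohomology with a nonempty invariant boundary automatically kills the top degree, none of conditions (1)--(4) of the rank-two theorem recalled in subsection~\ref{sub:regular} is needed in our relative setting; the obstructions vanish, so the required longitudinal representation exists. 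Implementing this final step rigorously requires the full equivariant obstruction-theoretic machinery developed in \cite{Tre8} and constitutes by a large margin the principal technical difficulty of the proof.
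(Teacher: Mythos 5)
Your reduction is exactly the route the paper intends: the paper itself offers no proof of this statement, presenting it only as a quotation from \cite{Tre8} obtained by "combining our algorithm with the obstruction theoretic methods of \cite{Tre8}." Recasting the deformation problem as an extension problem for $\varGamma \times \R \tto M \times \R$ with $U = M \times \bigl((-\infty,0)\cup(1,+\infty)\bigr)$, observing that the strata are $M_r \times \R$, disposing of $r=0$ and $r=1$ via Corollaries~\ref{thm:rank=0} and~\ref{thm:rank=1}, and reducing the $r=2$ step to a longitudinal-representation extension via Theorem~\ref{thm:regular} is all correct and is precisely the content of subsection~\ref{sub:algorithm} specialized to Problem~\ref{prob:ext**}. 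Your use of the hypotheses is also essentially right; one refinement worth making explicit is that connectedness of $M/\varGamma$ together with $M_2 \neq M$ forces \emph{every} component of $M_2/\varGamma$ (not just $M_2/\varGamma$ as a whole) to have nonempty frontier meeting $M_0 \cup M_1$, since a clopen component of the open set $M_2/\varGamma$ would have to be all of $M/\varGamma$.

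The gap is in the last step, and it is not merely a deferral of routine detail. Your justification --- "relative cohomology with a nonempty invariant boundary automatically kills the top degree, so none of conditions (1)--(4) is needed" --- does not survive the degree bookkeeping of your own setup. In the deformation problem the obstruction groups for the pair $\bigl((M_2/\varGamma)\times\R,\ \text{ends}\cup(\text{collar})\times\R\bigr)$ are the obstruction groups of the pair $(M_2/\varGamma,\ \text{collar})$ shifted \emph{down} by one. The primary obstruction (degree $1$, coefficients in the $\pi_0(\varGamma^x_x)$ system) therefore lands in a relative $H^0$, which indeed vanishes because every component meets the collar; but the secondary obstruction (degree $2$, coefficients in the $\freepi_1(\varGamma^x_x)$ system) lands in a relative $H^1$, which has no reason to vanish merely because the boundary is nonempty. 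This is exactly the degree in which condition (2) of the rank-two theorem of \cite{Tre8} operates, so asserting that all four conditions become vacuous in the relative setting is a substantive claim requiring the actual computation in \cite{Tre8} (note also that the theorem under review drops the hypotheses $\pi_2(\varGamma_x)=0$ and finiteness of $\pi_1(\varGamma_x)$, so the quoted rank-two theorem cannot simply be cited). Since the paper supplies no proof of this step either, I cannot check you against it; but as written your argument does not establish the vanishing of the secondary obstruction, and that is where the entire difficulty of the theorem resides.
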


\section{The averaging operator}\label{sec:averaging}

We begin by reviewing differentiable Haar systems on Lie groupoids. Let $\varGamma \tto M$ be an arbitrary Lie groupoid. For each point $x$ of $M$ let $\mu^x$ be a regular Borel measure on the target fiber $\varGamma^x = t^{-1}(x)$.%
\footnote{%
 There seems to be no universal agreement on the terminology. Some authors, including ourselves, require regular measures to be finite on compact sets in addition to having the usual inner and outer approximation properties. Other authors speak of ``Radon measures'' instead.
} %
If $\varphi \in C_c(\varGamma^x)$ is any continuous function with compact support on $\varGamma^x$ then $\varphi$ is $\mu^x$-integrable and we write $\integral \varphi(h) \der\mu^x(h)$ or $\integral_{th=x} \varphi(h) \der h$ for its integral. We call the family $\{\mu^x\}$ a \emph{differentiable\/ \textup(left\/\textup) Haar system} on $\varGamma \tto M$ if it enjoys the following properties:
\begin{enumerate}
 \item (Positivity.) For each $x$ the support of the measure $\mu^x$ is all of $\varGamma^x$ in other words the empty set is the only open subset of $\varGamma^x$ that is assigned measure zero by $\mu^x$.
 \item (Left invariance.) For every arrow $g$ one has $\mu^{tg}(gA) = \mu^{sg}(A)$ for all Borel subsets $A$ of $\varGamma^{sg}$ and, consequently, $\integral \varphi(gh) \der\mu^{sg}(h) = \integral \varphi(h) \der\mu^{tg}(h)$ for all functions $\varphi \in C_c(\varGamma^{tg})$.
 \item (Differentiability.) For every $C^\infty$-differentiable function $\varphi \in C^\infty_c(\R^n \times \varGamma)$ the function on $\R^n \times M$ given by $(r_1,\dotsc,r_n,x) \mapsto \integral \varphi(r_1,\dotsc,r_n,h) \der\mu^x(h)$ is $C^\infty$-differentiable.
\end{enumerate}
The standard way to endow $\varGamma \tto M$ with a differentiable Haar system is to take any \emph{left invariant} $C^\infty$ vector bundle metric on $\ker\der t$ and then consider the volume densities that this metric induces along the $t$-fibers; by definition any such metric corresponds to an ordinary metric on $1^*\ker\der t$ via the isomorphism $s^*(1^*\ker\der t) \simto \ker\der t$ that makes each vector $w \in \ker\T_{1sg}t$ correspond to its composition, $0_gw \in \ker\T_gt$, with the null vector $0_g \in \T_g\varGamma$ in the tangent groupoid $\T\varGamma \tto \T M$.

A \emph{cut-off function} for $\varGamma \tto M$ is a nonnegative real-valued $C^\infty$ function $c$ on $M$ whose support has the property that for every compact set $K$ in $M$ the intersection $\supp c \cap \varGamma K$ is compact and every orbit $\varGamma x$ has nonempty intersection with the open subset of $M$ on which $c > 0$. For any choice of a differentiable Haar system on $\varGamma \tto M$ a \emph{normalizing function} is a cut-off function such that $\integral_{th=x} c(sh) \der h = 1$ for all $x$ in $M$. The result of dividing any cut-off function by the positive invariant $C^\infty$ function $x \mapsto \integral_{th=x} c(sh) \der h$ is a normalizing function. A straightforward adaptation of the arguments of \cite[section 6.2]{Tu} shows that cut-off functions can only exist when $\varGamma \tto M$ is proper, and that in such case every differentiable Haar system admits normalizing functions.

For the rest of this section $\varGamma \tto M$ is a proper Lie groupoid endowed with a differentiable Haar system and $c$ is a normalizing function.

The key constructions of this paper are all ``Haar integrals depending on parameters'' in a sense which we now explain. Let $f: P \to M$ be any differentiable map from some manifold $P$ of ``parameters'' into the base $M$ of our groupoid. The projection, $\pr$, from the fiber product
\begin{equation*}
	P \ftimes{f}{t} \varGamma = \{(z,h) \in P \times \varGamma: f(z) = th\}
\end{equation*}
to $P$ is evidently surjective. Any smooth vector bundle $E$ over $P$ may be pulled back along $\pr$ thus giving rise to a smooth vector bundle $\pr^*E$ over $P \ftimes{f}{t} \varGamma$. Integration along the target fibers enables us to transform arbitrary $C^\infty$ cross-sections $\vartheta$ of $\pr^*E$ into cross-sections of $E$:%
\begin{subequations}
\label{eqn:12B.10.6}
\begin{equation}
	P \ni z \mapsto \integral_{th=f(z)} c(sh)\vartheta(z,h) \der h \in E_z.
\label{eqn:12B.10.6a}
\end{equation}
In this expression, which is what we refer to as a ``Haar integral'' depending on the ``parameter'' $z$, the integrand is a compactly supported $C^\infty$ function on $\varGamma^{f(z)}$ with values in the finite-dimensional vector space $E_z$. It is easy to check that because of the differentiability of both the Haar system and the normalizing function the cross-section of $E$ defined by \eqref{eqn:12B.10.6a} is always $C^\infty$. Thus, Haar integration with parameters in $P$ and values in $E$ gives a functional
\begin{equation}
	\Gamma^\infty(P \ftimes{f}{t} \varGamma;\pr^*E) \longto \Gamma^\infty(P;E).
\label{eqn:12B.10.6b}
\end{equation}
\end{subequations}
For the purposes of the present section we may think of \eqref{eqn:12B.10.6b} simply as a linear map; it won't be until section \ref{sec:proof} that it will be necessary for us to look in some detail into its continuity properties.

\begin{defn}\label{defn:12B.11.2} A connection $H$ on $\varGamma \tto M$ is \emph{nondegenerate} when its effect $\lambda^H = \der t \circ \eta^H$ is an invertible pseudo-representation $s^*\T M \simto t^*\T M$ in other words every linear map $\lambda^H_g = \T_gt \circ \eta^H_g$ is a bijection of $\T_{sg}M$ onto $\T_{tg}M$. The \emph{mean ratio} of the nondegenerate connection $H$ is the connection $\hat{H}$ with horizontal lift
\begin{equation}
	\eta^{\hat{H}}_g = \integral_{th=sg} c(sh)[\eta^H_{gh} \cdot (\eta^H_h)^{-1}] \circ (\lambda^H_h)^{-1} \der h.
\label{eqn:12B.11.6}
\end{equation}
We shall refer to the correspondence $H \mapsto \hat{H}$, from nondegenerate connections to connections, as the \emph{averaging operator} associated with the given Haar system and normalizing function. \end{defn}

Concerning this definition, a few clarifications are in order. The expression $\eta^H_{gh} \cdot (\eta^H_h)^{-1}$ is one way of writing the composition of the two linear maps
\begin{alignat*}{2} &
	\T_{sh}M \longto \T_{gh}\varGamma \ftimes{\T_{gh}s}{\T_hs} \T_h\varGamma, \quad%
		v \mapsto (\eta^H_{gh}v,\eta^H_hv) & &\llap{and} \\ &
	\T_{gh}\varGamma \ftimes{\T_{gh}s}{\T_hs} \T_h\varGamma \longto \T_{ghh^{-1}=g}\varGamma, \quad%
		(w_1,w_2) \mapsto w_1w_2^{-1},
\end{alignat*}
where $w_1w_2^{-1}$ is the ratio of $w_1$ to $w_2$ as elements of the tangent groupoid $\T\varGamma \tto \T M$. When the bijection $\lambda^H_h: \T_{sh}M \simto \T_{th=sg}M$ is inverted and composed with $\eta^H_{gh} \cdot (\eta^H_h)^{-1}$, the result is a linear map of $\T_{sg}M$ into $\T_g\varGamma$. In equation \eqref{eqn:12B.11.6}, we view the integrand as a $C^\infty$ cross-section of $\pr^*L(s^*\T M,\T\varGamma)$, where $\pr$ is the projection $\varGamma \ftimes{s}{t} \varGamma \to \varGamma$ sending $(g,h) \mapsto g$, and accordingly the whole right-hand member as a Haar integral depending on the parameter $g$. This gives us a global $C^\infty$ cross-section
\begin{equation*}
	\eta^{\hat{H}} \in \Gamma^\infty\bigl(\varGamma;L(s^*\T M,\T\varGamma)\bigr).
\end{equation*}
In order to be able to say that this is the horizontal lift of a connection, we need to make sure it satisfies the condition $\der s \circ \eta^{\hat{H}} = \id$; but that is clear:
\begin{align*}
 \T_gs \circ \eta^{\hat{H}}_g &
	= \integral_{th=sg} c(sh)\T_gs \circ [\eta^H_{gh} \cdot (\eta^H_h)^{-1}] \circ (\lambda^H_h)^{-1} \der h \\ &
	= \integral_{th=sg} c(sh)[\T_ht \circ \eta^H_h] \circ (\lambda^H_h)^{-1} \der h \\ &
	= \integral_{th=sg} c(sh)\id \der h
	= \id.% LaTeX knows
\end{align*}

The newly defined connection $\hat{H}$ has a number of properties. First, it is always unital:
\begin{align}
 \eta^{\hat{H}}_{1x} &
	= \integral_{th=x} c(sh)[\eta^H_h \cdot (\eta^H_h)^{-1}] \circ (\lambda^H_h)^{-1} \der h \notag\\ &
	= \integral_{th=x} c(sh)[\T_{th}1 \circ \T_ht \circ \eta^H_h] \circ (\lambda^H_h)^{-1} \der h \notag\\ &
	= \integral_{th=x} c(sh)\T_x1 \der h
	= \T_x1.
\label{lem:12B.11.3}
\end{align}
Second, its effect is expressible in terms of the effect of $H$ by means of a simple formula,
\begin{align}
 \lambda^{\hat{H}}_g
	= \T_gt \circ \eta^{\hat{H}}_g &
	= \integral_{th=sg} c(sh)\T_gt \circ [\eta^H_{gh} \cdot (\eta^H_h)^{-1}] \circ (\lambda^H_h)^{-1} \der h \notag\\ &
	= \integral_{th=sg} c(sh)[\T_{gh}t \circ \eta^H_{gh}] \circ (\lambda^H_h)^{-1} \der h \notag\\ &
	= \integral_{th=sg} c(sh)\lambda^H_{gh} \circ (\lambda^H_h)^{-1} \der h,
\label{eqn:12B.11.7}
\end{align}
from which it follows at once that $\lambda^{\hat{H}}$ must be the same as $\lambda^H$ whenever $\lambda^H$ is a representation. Third, if $H$ is multiplicative along an invariant submanifold $Z$ of $M$, then $\hat{H} \mathbin| Z = H \mathbin| Z$. To prove it, let us suppose somewhat more generally that $Z \subset M$ is a locally invariant submanifold satisfying $\supp c \cap \varGamma Z \subset Z$, rather than an invariant submanifold. Let $g \in \varGamma \mathbin| Z$ and $v \in \T_{sg}Z$ be given. For every $h \in s^{-1}(\supp c) \cap t^{-1}(sg)$ we have $h \in \varGamma \mathbin| Z$ and hence $v^h := (\lambda^H_h)^{-1}v = \lambda^H_{h^{-1}}v \in \T_{sh}Z$ by the multiplicativity of $H \mathbin| Z$, and likewise $\eta^H_{gh}v^h = \eta^H_g\lambda^H_hv^h \cdot \eta^H_hv^h = \eta^H_gv \cdot \eta^H_hv^h$. Therefore,
\begin{align}
 \eta^{\hat{H}}_gv &
	= \integral_{th=sg} c(sh)[\eta^H_{gh}v^h \cdot (\eta^H_hv^h)^{-1}] \der h \notag\\ &
	= \integral_{th=sg} c(sh)\eta^H_gv \der h
	= \eta^H_gv.
\label{lem:hat|Z}
\end{align}

\subsection{Proof of Theorem \ref{prop:12B.11.7}}\label{sub:proof1}

Let $H$ be now a connection on $\varGamma \tto M$ whose effect $\lambda^H$ is a \emph{representation.} In virtue of our previous remarks, $\hat{H}$ enjoys all of the properties required of the connection whose existence is stated in Theorem \ref{prop:12B.11.7} except perhaps for multiplicativity. We are going to prove our theorem by showing that under the present hypotheses $\hat{H}$ is actually \emph{always multiplicative.} We begin our proof with a few preliminary considerations of a general character.

Let $\Lie\varGamma = 1^*\ker\der s$ be the \emph{Lie algebroid bundle} of $\varGamma \tto M$, i.e., the smooth vector bundle over $M$ with fibers $\Lie_x\varGamma = \ker\T_{1x}s = \T_{1x}\varGamma_x$. For every $g$ in $\varGamma$, the right-translation map $\varGamma_{sg} \simto \varGamma_{tg}$, $h \mapsto hg^{-1}$ is a diffeomorphism which sends $g \mapsto 1tg$. As one lets $g$ vary, the induced tangent maps $\T_g\varGamma_{sg} \simto \T_{1tg}\varGamma_{tg}$, $w \mapsto w \cdot 0_{g^{-1}}$ (the dot indicates multiplication in the tangent groupoid $\T\varGamma \tto \T M$, as usual) make up an isomorphism of vector bundles over $\varGamma$
\begin{equation}
	\omega: \ker\der s \simto t^*\Lie\varGamma
\label{eqn:12B.9.5}
\end{equation}
which generalizes the familiar (right-invariant) Maurer--Cartan form \cite[p.~96]{Sharpe}. Let $\an: \Lie\varGamma \to \T M$ denote the \emph{infinitesimal anchor map} of $\varGamma \tto M$, that is, the morphism of smooth vector bundles over $M$ given by $1^*\ker\der s \xto{1^*\der t} 1^*t^*\T M \simeq \T M$.

To a large extent, the deviation of an arbitrary connection $H$ from multiplicativity is measured by the so-called ``basic curvature'' of $H$: if we let $s$ and $t$ denote not only the usual source and target but also the two maps of $\varGamma_2 := \varGamma \ftimes{s}{t} \varGamma$ onto $M$ that send $(g,h) \in \varGamma_2$ respectively to $sh$ and $tg$, this is the cross-section%
\begin{subequations}
\label{eqn:basic}
\begin{equation}
	R^H \in \Gamma^\infty\bigl(\varGamma_2;L(s^*\T M,t^*\Lie\varGamma)\bigr)
\label{eqn:basic*}
\end{equation}
defined by the formula
\begin{equation}
	R^H(g,h) = \omega_{gh} \circ (\eta^H_g\lambda^H_h \cdot \eta^H_h - \eta^H_{gh}): \T_{sh}M \longto \Lie_{tg}\varGamma.
\label{eqn:basic**}
\end{equation}
\end{subequations}
According to equation \eqref{prop:12B.9.4}, the vanishing of $R^H$ is necessary for multiplicativity; on account of the remarks following that equation and of the invertibility of $\omega_{gh}$, it is also sufficient provided $H$ is \emph{nondegenerate} (Definition \ref{defn:12B.11.2}).

Besides $R^H$, $H$ determines a pseudo-representation hereafter denoted $\alpha^H: s^*\Lie\varGamma \to t^*\Lie\varGamma$ of $\varGamma \tto M$ on its own Lie algebroid bundle: by definition, for all $g \in \varGamma$, $X \in \Lie_{sg}\varGamma$,
\begin{equation}
	\alpha^H_gX = \omega_g\bigl(\eta^H_g(\an_{sg}X) \cdot X\bigr).
\label{eqn:14A.13.1}
\end{equation}
As in \cite[p.~502]{Wein}, let $\mathfrak{b} = \ker(\an: \Lie\varGamma \to \T M)$ denote the \emph{isotropy subbundle} of $\Lie\varGamma$. This is a subbundle of class $C^\infty$ if, and only if, its fibers have locally constant rank. (They might not!) Clearly $\an_{tg} \circ \alpha^H_g = \lambda^H_g \circ \an_{sg}$ for all $g$ i.e.~$\an$ intertwines the two pseudo-representations $\alpha^H$ and $\lambda^H$, so that $\alpha^H$ carries $\mathfrak{b}$ into itself; in fact $\alpha^H_gX = 0_g \cdot X \cdot 0_{g^{-1}}$ for all $X \in \ker(\an_{sg}) = \T_{1sg}\varGamma_{sg}^{sg}$ in other words $\alpha^H_g: \mathfrak{b}_{sg} \to \mathfrak{b}_{tg}$ coincides with $\Ad(g) = \T_{1sg}c_g$ (the ``adjoint representation''), the differential at $1sg \in \varGamma_{sg}^{sg}$ of the conjugation homomorphism $c_g: \varGamma_{sg}^{sg} \to \varGamma_{tg}^{tg}$, $h \mapsto ghg^{-1}$.

It is possible to express the $H$-vertical component of the tangent multiplication entirely in terms of $\alpha^H$ and $R^H$: if we set
\begin{equation}
	\omega^H = \omega \circ (\id - \eta^H \circ \der s): \T\varGamma \longto t^*\Lie\varGamma,
\label{eqn:H-vertical}
\end{equation}
then, for every pair of tangent vectors $(w_1,w_2)$ in $\T_{g_1}\varGamma \ftimes{\T_{g_1}s}{\T_{g_2}t} \T_{g_2}\varGamma$,%
\footnote{%
 Evidently the formula is valid for any of the following pairs: $(w_1,0_{g_2})$ with $w_1 \in \ker\T_{g_1}s$; $\bigl((\eta^H_{g_1} \circ \T_{g_2}t)w_2,w_2\bigr)$ with $w_2 \in \ker\T_{g_2}s$; $(\eta^H_{g_1}\lambda^H_{g_2}v,\eta^H_{g_2}v)$ with $v \in \T_{sg_2}M$. In view of the linearity of tangent multiplication, this is all we need in order to conclude that it must be valid for the general pair as well.}
\begin{equation}
	\omega^H_{g_1g_2}(w_1 \cdot w_2) = \omega^H_{g_1}w_1 + \alpha^H_{g_1}(\omega^H_{g_2}w_2) + R^H(g_1,g_2)\bigl((\T_{g_2}s)w_2\bigr).
\label{eqn:14A.13.2}
\end{equation}

Let $H$ be now an arbitrary \emph{nondegenerate} connection on $\varGamma \tto M$. The horizontal lift of any other connection $H'$ may be put into the form
\begin{equation}
	\eta^{H'}_g = \eta^H_g + \omega_g^{-1} \circ X(g) \circ \lambda^H_g
\label{eqn:14A.14.3}
\end{equation}
for a unique cross-section $X \in \Gamma^\infty\bigl(\varGamma;t^*L(\T M,\Lie\varGamma)\bigr)$: we simply take $X = \omega^H \circ \eta^{H'} \circ (\lambda^H)^{-1}$. Of course, any $C^\infty$ cross-section $X$ of $t^*L(\T M,\Lie\varGamma)$ gives rise to a corresponding $H'$ via \eqref{eqn:14A.14.3}. We ask the following question\spacefactor3000: For which $X$, if any, is the corresponding $H'$ multiplicative? Let $(g,h)$ belong to $\varGamma_2$. On the basis of the identity $\omega^{H'} = \omega^H \circ (\id - \eta^{H'} \circ \der s)$ and of \eqref{eqn:14A.13.2}, we have
\begin{align}
 R^{H'}(g,h) &
	= \omega^{H'}_{gh} \circ (\eta^{H'}_g\lambda^{H'}_h \cdot \eta^{H'}_h) \notag\\ &
	= \omega^H_{gh} \circ (\eta^{H'}_g\lambda^{H'}_h \cdot \eta^{H'}_h)
	  - \omega^H_{gh} \circ \eta^{H'}_{gh} \circ
	    \T_{gh}s \circ (\eta^{H'}_g\lambda^{H'}_h \cdot \eta^{H'}_h) \notag\\ &
	= (\omega^H_g \circ \eta^{H'}_g) \circ \lambda^{H'}_h
	  + \alpha^H_g \circ (\omega^H_h \circ \eta^{H'}_h)
	  + R^H(g,h) \circ \T_hs \circ \eta^{H'}_h \notag\\* &\justify
	  - (\omega^H_{gh} \circ \eta^{H'}_{gh}) \circ
	    \T_hs \circ \eta^{H'}_h \notag\\ &
	= X(g) \circ \lambda^H_g \circ (\lambda^{H'}_h - \lambda^H_h)
	  + X(g) \circ \lambda^H_g \circ \lambda^H_h
	  + \alpha^H_g \circ X(h) \circ \lambda^H_h \notag\\* &\justify
	  + R^H(g,h) - X(gh) \circ \lambda^H_{gh}.
\label{eqn:defect}
\end{align}
From the latter equation we immediately deduce what follows:

\begin{lem}\label{lem:coboundary} Suppose that the effect\/ $\lambda^H$ of\/ $H$ is a representation and that the effect\/ $\lambda^{H'}$ of the connection\/ $H'$ corresponding via\/ \eqref{eqn:14A.14.3} to the\/ $C^\infty$ cross-section\/ $X$ of\/ $t^*L(\T M,\Lie\varGamma)$ coincides with\/ $\lambda^H$ in other words\/ $X$ takes values in the (possibly singular) subbundle\/ $t^*L(\T M,\mathfrak{b})$ of\/ $t^*L(\T M,\Lie\varGamma)$. Then, $H'$ is multiplicative if, and only if, for all\/ $(g,h) \in \varGamma_2$
\begin{equation}
	\Ad(g) \circ X(h) \circ (\lambda^H_g)^{-1} - X(gh) + X(g) = \varDelta^H(g,h),
\label{eqn:coboundary}
\end{equation}
where, by definition,
\begin{equation}
	\varDelta^H(g,h) = -R^H(g,h) \circ (\lambda^H_h)^{-1} \circ (\lambda^H_g)^{-1}. \qed%
\label{eqn:14A.14.2}
\end{equation} \end{lem}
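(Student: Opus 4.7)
The proof will be an immediate algebraic consequence of formula \eqref{eqn:defect}. My plan is to substitute the hypothesis $\lambda^{H'} = \lambda^H$ into the right-hand side of \eqref{eqn:defect}, simplify using the representation property of $\lambda^H$ and the observation that $\alpha^H_g$ restricts to $\Ad(g)$ on the isotropy subbundle $\mathfrak{b}$, and then translate the vanishing of $R^{H'}$ into the required identity.

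First, I would note that under the hypothesis $\lambda^{H'} = \lambda^H$ the opening summand $X(g) \circ \lambda^H_g \circ (\lambda^{H'}_h - \lambda^H_h)$ on the right-hand side of \eqref{eqn:defect} is identically zero. Since $\lambda^H$ is a representation, $\lambda^H_g \circ \lambda^H_h = \lambda^H_{gh}$, so the surviving summands $X(g) \circ \lambda^H_g \circ \lambda^H_h$ and $-X(gh) \circ \lambda^H_{gh}$ combine into $\bigl(X(g) - X(gh)\bigr) \circ \lambda^H_{gh}$. Because $X(h)$ takes values in $\mathfrak{b}$, I would then invoke the remark made just after \eqref{eqn:14A.13.1}, that $\alpha^H_g$ coincides with $\Ad(g)$ on $\mathfrak{b}_{sg}$, to replace $\alpha^H_g \circ X(h)$ by $\Ad(g) \circ X(h)$. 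The upshot is the identity
\begin{equation*}
R^{H'}(g,h) = \bigl(X(g) - X(gh)\bigr) \circ \lambda^H_{gh} + \Ad(g) \circ X(h) \circ \lambda^H_h + R^H(g,h).
\end{equation*}

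Next, since $\lambda^{H'} = \lambda^H$ is invertible by the standing nondegeneracy of $H$, the connection $H'$ is itself nondegenerate; by the remark following \eqref{eqn:basic**}, $H'$ is multiplicative if and only if $R^{H'}(g,h) = 0$ for all $(g,h) \in \varGamma_2$. Composing the previous display on the right with $(\lambda^H_{gh})^{-1} = (\lambda^H_h)^{-1} \circ (\lambda^H_g)^{-1}$, using that $\lambda^H_h \circ (\lambda^H_h)^{-1} \circ (\lambda^H_g)^{-1} = (\lambda^H_g)^{-1}$, and rearranging, one arrives precisely at \eqref{eqn:coboundary} with $\varDelta^H$ given by \eqref{eqn:14A.14.2}.

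There is no substantive obstacle here: the conceptual work—the identification of the ``defect'' $R^{H'}(g,h)$ with the expression in \eqref{eqn:defect}—has already been carried out. The only matter requiring care will be clerical, namely keeping straight on which side and in what order the inverses $(\lambda^H_g)^{-1}$ and $(\lambda^H_h)^{-1}$ end up being composed with $\Ad(g) \circ X(h)$ and with $R^H(g,h)$ after the final right-composition by $(\lambda^H_{gh})^{-1}$.
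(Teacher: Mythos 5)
Your proposal is correct and follows exactly the route the paper intends: the paper gives no separate proof of Lemma \ref{lem:coboundary} beyond the remark that it is ``immediately deduced'' from \eqref{eqn:defect}, and your substitution of $\lambda^{H'}=\lambda^H$, use of the representation property, replacement of $\alpha^H_g$ by $\Ad(g)$ on $\mathfrak{b}$, and right-composition with $(\lambda^H_{gh})^{-1}$ is precisely that deduction, with the equivalence ``$H'$ multiplicative $\Leftrightarrow R^{H'}=0$'' correctly justified via the nondegeneracy of $H'$.
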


This lemma can be given a more conceptual interpretation in terms of groupoid cohomology \cite[pp.~11--13]{Ren}. The pseudo-representation of $\varGamma \tto M$ on $L(\T M,\Lie\varGamma)$ given by
\begin{equation}
	L(\T_{sg}M,\Lie_{sg}\varGamma) \ni X \mapsto \alpha^H_g \circ X \circ (\lambda^H_g)^{-1} \in L(\T_{tg}M,\Lie_{tg}\varGamma)
\label{eqn:14A.14.1}
\end{equation}
carries the (possibly singular) vector subbundle $L(\T M,\mathfrak{b})$ of $L(\T M,\Lie\varGamma)$ into itself. By virtue of the obvious identities $\Ad(g_1g_2) = \Ad(g_1)\Ad(g_2)$ and $\Ad(1x) = \id$ and of our hypothesis that $\lambda^H$ is a representation, \eqref{eqn:14A.14.1} yields a ``representation'' of $\varGamma \tto M$ on $L(\T M,\mathfrak{b})$ upon restriction. Since $\an_{tg} \circ R^H(g,h) = \lambda^H_g\lambda^H_h - \lambda^H_{gh} = 0$, the $C^\infty$ cross-section $\varDelta^H \in \Gamma^\infty\bigl(\varGamma_2;t^*L(\T M,\Lie\varGamma)\bigr)$ defined by \eqref{eqn:14A.14.2} takes values in $t^*L(\T M,\mathfrak{b})$ and so may be viewed as a differentiable $2$-cochain on $\varGamma \tto M$ with coefficients in $L(\T M,\mathfrak{b})$.%
\footnote{%
 What we are really dealing with, here, is cohomology with coefficients in a \emph{sheaf,} namely, the sheaf of all $L(\T M,\mathfrak{b})$~valued $C^\infty$ cross-sections of $L(\T M,\Lie\varGamma)$, rather than with coefficients in a vector bundle.
} %
If now for each composable triplet of arrows $g$,~$h$,~$k$ we expand the expression $\omega^H_{ghk} \circ (\eta^H_g\lambda^H_h\lambda^H_k \cdot \eta^H_h\lambda^H_k \cdot \eta^H_k)$ successively by means of \eqref{eqn:14A.13.2} first in one way and then in the other, and compare the results, we get (cf.~\cite[p.~428]{Abad13})%
\begin{subequations}
\label{eqn:cocycle}
\begin{equation}
	\alpha^H_g \circ R^H(h,k) - R^H(gh,k) + R^H(g,hk) - R^H(g,h) \circ \lambda^H_k = 0
\label{eqn:cocycle*}
\end{equation}
or, after dividing by $\lambda^H_g\lambda^H_h\lambda^H_k$ and after using the hypothesis that $\lambda^H$ is a representation,
\begin{equation}
	\Ad(g) \circ \varDelta^H(h,k) \circ (\lambda^H_g)^{-1} - \varDelta^H(gh,k) + \varDelta^H(g,hk) - \varDelta^H(g,h) = 0.
\label{eqn:cocycle**}
\end{equation}
\end{subequations}

\begin{lem}\label{lem:cocycle} When\/ $\lambda^H$ is a representation, the\/ $2$-cochain\/ $\varDelta^H$ given by\/ \eqref{eqn:14A.14.2} is a\/ $2$-cocycle. \qed \end{lem}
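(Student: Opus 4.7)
The plan is to translate the assertion ``$\varDelta^H$ is a $2$-cocycle'' into a concrete pointwise identity and to read that identity off from computations already sketched in the excerpt. By the very definition of groupoid cohomology applied to the representation \eqref{eqn:14A.14.1} restricted to the subbundle $L(\T M,\mathfrak{b})$, $\varDelta^H$ is a $2$-cocycle precisely when
\[
\alpha^H_g \circ \varDelta^H(h,k) \circ (\lambda^H_g)^{-1} - \varDelta^H(gh,k) + \varDelta^H(g,hk) - \varDelta^H(g,h) = 0
\]
for every composable triplet of arrows $(g,h,k)$. Since $\varDelta^H$ takes values in $L(\T M,\mathfrak{b})$ and $\alpha^H_g$ restricts on $\mathfrak{b}$ to $\Ad(g)$, as recorded right after \eqref{eqn:14A.13.1}, this is literally \eqref{eqn:cocycle**}. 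Thus the lemma reduces to verifying \eqref{eqn:cocycle**}, which the excerpt has in turn deduced from \eqref{eqn:cocycle*}.

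The real work, then, is to prove \eqref{eqn:cocycle*}. The key computation is to evaluate
\[
\omega^H_{ghk}\bigl(\eta^H_g\lambda^H_h\lambda^H_k v \cdot \eta^H_h\lambda^H_k v \cdot \eta^H_k v\bigr), \qquad v \in \T_{sk}M,
\]
in two different ways by means of the ``structure equation'' \eqref{eqn:14A.13.2}, bracketing the triple tangent product either as $\bigl((\,\cdot\,)(\,\cdot\,)\bigr)(\,\cdot\,)$ or as $(\,\cdot\,)\bigl((\,\cdot\,)(\,\cdot\,)\bigr)$; the two results agree by associativity of tangent multiplication. At each successive application of \eqref{eqn:14A.13.2} the contributions coming from the horizontal pieces vanish, because $\omega^H_\ell\circ\eta^H_\ell = 0$ on $\T M$ by the very definition \eqref{eqn:H-vertical} of $\omega^H$. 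The first bracketing then collapses to $R^H(g,h)\lambda^H_k v + R^H(gh,k)v$ and the second to $\alpha^H_g R^H(h,k)v + R^H(g,hk)v$; equating them and rearranging gives \eqref{eqn:cocycle*}.

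Finally, the passage from \eqref{eqn:cocycle*} to \eqref{eqn:cocycle**} amounts to composing each term on the right with $(\lambda^H_g\lambda^H_h\lambda^H_k)^{-1}$ and then using $\lambda^H_{gh}=\lambda^H_g\lambda^H_h$, $\lambda^H_{hk}=\lambda^H_h\lambda^H_k$, $\lambda^H_{ghk}=\lambda^H_g\lambda^H_h\lambda^H_k$ (these follow from the hypothesis that $\lambda^H$ is a representation, which also supplies the needed invertibility via $\lambda^H_{g^{-1}}=(\lambda^H_g)^{-1}$) and then reading each resulting $R^H$-term as $-\varDelta^H$ according to the definition \eqref{eqn:14A.14.2}. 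The only genuinely nontrivial step is the two-way expansion of the triple product in the preceding paragraph; everything else is bookkeeping. The whole calculation is, essentially, the standard Bianchi-type identity for the basic curvature of a connection on a Lie groupoid, which is why the excerpt flags it with the reference \cite[p.~428]{Abad13}.
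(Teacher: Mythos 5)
Your proof is correct and follows essentially the same route as the paper, whose argument for this lemma is precisely the two-way expansion of $\omega^H_{ghk} \circ (\eta^H_g\lambda^H_h\lambda^H_k \cdot \eta^H_h\lambda^H_k \cdot \eta^H_k)$ via \eqref{eqn:14A.13.2} leading to \eqref{eqn:cocycle*}, followed by division by $\lambda^H_g\lambda^H_h\lambda^H_k$ to obtain \eqref{eqn:cocycle**} (which is why the lemma is stated with an immediate \textit{q.e.d.}). You merely spell out the details---the vanishing of the horizontal contributions and the bookkeeping with $(\lambda^H_g)^{-1}$---that the paper leaves implicit.
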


Lemma \ref{lem:coboundary} may now be understood as saying that for any $H$ whose effect is a representation the problem of finding a multiplicative connection with the same effect as $H$ is solvable if, and only if, the class $[\varDelta^H]$ of the $2$-cocycle $\varDelta^H$ in the second differentiable cohomology of $\varGamma \tto M$ with coefficients in $L(\T M,\mathfrak{b})$ vanishes, and that in such case the multiplicative connections $H'$ for which $\lambda^{H'}$ is equal to $\lambda^H$ correspond bijectively via \eqref{eqn:14A.14.3} to the differentiable $L(\T M,\mathfrak{b})$~valued $1$-cochains $X$ whose coboundary $\delta X$ is equal to $\varDelta^H$.

What we have said so far is true, of course, regardless of whether $\varGamma \tto M$ is proper. Making use of properness we can now turn every differentiable $L(\T M,\mathfrak{b})$~valued $2$-cocycle $Z$ on $\varGamma \tto M$ into a differentiable $L(\T M,\mathfrak{b})$~valued $1$-cochain $\hat{Z}$ whose coboundary $\delta\hat{Z}$ is equal to $Z$: we can do this e.g.~by taking $\hat{Z}(g) = \integral_{th=sg} c(sh)Z(g,h) \der h$; the verification of the coboundary relation $\delta\hat{Z} = Z$ is standard \cite[pp.~694--695]{Crainic03} and relies on the normalizing function property as well as on the left invariance of the Haar system. Upon taking $Z = \varDelta^H$, we conclude that there must be \emph{some} multiplicative connection with the same effect as $H$, namely, the $H'$ which results from taking $X = \widehat{\varDelta^H}$ in \eqref{eqn:14A.14.3}. However, what we claimed initially was that $\hat{H}$ \emph{itself} was multiplicative. In order to bridge the gap, and thereby finish our proof, suffice it to say that by the linearity of tangent multiplication
\begin{align*}
 \omega_g^{-1} \circ R^H(g,h) &
	= (\eta^H_g\lambda^H_h \cdot \eta^H_h - \eta^H_{gh}) \cdot 0_{h^{-1}} \\ &
	= (\eta^H_g\lambda^H_h \cdot \eta^H_h - \eta^H_{gh}) \cdot [(\eta^H_h)^{-1} - (\eta^H_h)^{-1}] \\ &
	= \eta^H_g\lambda^H_h \cdot \eta^H_h \cdot (\eta^H_h)^{-1} - \eta^H_{gh} \cdot (\eta^H_h)^{-1} \\ &
	= \eta^H_g\lambda^H_h - \eta^H_{gh} \cdot (\eta^H_h)^{-1}
\end{align*}
and therefore
\begin{align*}
 [\eta^H_{gh} \cdot (\eta^H_h)^{-1}] \circ (\lambda^H_h)^{-1} &
	= \eta^H_g - [\eta^H_g\lambda^H_h - \eta^H_{gh} \cdot (\eta^H_h)^{-1}] \circ (\lambda^H_h)^{-1} \\ &
	= \eta^H_g + \omega_g^{-1} \circ \varDelta^H(g,h) \circ \lambda^H_g
\end{align*}
so that by the normalizing function property
\begin{equation}
	\eta^{\hat{H}}_g = \eta^H_g + \omega_g^{-1} \circ \widehat{\varDelta^H}(g) \circ \lambda_g.
\label{eqn:12B.15.5a}
\end{equation}

\begin{rem}\label{rem:abelian} Suppose that $H$ is multiplicative now. Lemma \ref{lem:coboundary} tells us that a connection $H'$ on $\varGamma \tto M$ with the same effect as $H$ is multiplicative if, and only if, the $C^\infty$ cross-section $X$ of $t^*L(\T M,\mathfrak{b})$ which to it corresponds via \eqref{eqn:14A.14.3} satisfies
\begin{equation*}
	\Ad(g) \circ X(h) \circ (\lambda^H_g)^{-1} - X(gh) + X(g) = 0.
\end{equation*}
This equation says that $X$ is a differentiable $L(\T M,\mathfrak{b})$~valued $1$-cocycle on $\varGamma \tto M$. But in that case, because of properness, $X$ must be the coboundary of some differentiable $L(\T M,\mathfrak{b})$~valued $0$-cochain, that is,
\begin{equation*}
	X(g) = \Ad(g) \circ Y(sg) \circ (\lambda^H_g)^{-1} - Y(tg)
\end{equation*}
for some $\mathfrak{b}$-valued $C^\infty$-differentiable vector bundle morphism $Y: \T M \to \mathfrak{b} \subset \Lie\varGamma$, one possible choice being $Y(x) = -\integral_{th=x} c(sh)X(h) \der h$. Let then $\varGamma \tto M$ be an abelian Lie group bundle. For every $g$ we have $\Ad(g) = \id$, $\lambda^H_g = \id$, and, therefore, $X(g) = 0$: we have just recovered, by other means, the same conclusions as in section \ref{sec:preliminaries}, Example (b). \end{rem}

\subsection{Proof of Theorem \ref{thm:regular}}\label{sub:proof2}

Let us now suppose that our proper Lie groupoid $\varGamma \tto M$ is regular, with longitudinal bundle $L \subset \T M$. Let $U$ be an open subset of $M$, and let $H$ be a connection on $\varGamma \tto M$ which is multiplicative along $U$. We are going to prove a version of Theorem \ref{thm:regular} which is more general than that stated in subsection \ref{sub:regular}: instead of requiring our open set $U$ to be \emph{invariant,} we only assume that it has the following property, which is evidently weaker than invariance.
\begin{equation}
	\text{\em There is some open set\/ $B$ in\/ $M$ such that\/ $\varGamma B = M$ and\/ $B \cap \varGamma U \subset U$.}
\label{eqn:domain}
\end{equation}
Of course, in our theorem, the `only if' direction is trivial. As to the nontrivial, `if' direction, let a longitudinal representation $\rho: s^*L \simto t^*L$ of $\varGamma \tto M$ be given such that $\rho_g$ equals the restriction of $\lambda^H_g$ to $L_{sg}$ for all $g \in \varGamma \mathbin| U$.

Our proof starts with the observation that, because of our assumption \eqref{eqn:domain} about $U$, there must be some Riemannian metric on $M$ which is invariant under the effect of $H \mathbin| U$ in the sense that $\lambda^H_g$ is an isometry of $\T_{sg}M$ onto $\T_{tg}M$ for every $g \in \varGamma \mathbin| U$. While conceptually this statement isn't any deeper than the standard result on the existence of invariant inner products in the theory of representations of compact groups, its proof involves certain technical subtleties whose unraveling might not be completely obvious. As the same construction will show up again in a more general context in subsections \ref{sub:deducing2} and \ref{sub:deducing1}, we record it as a stand-alone lemma for future reference.

\begin{lem}\label{lem:metric} Let\/ $\lambda$ be a pseudo-representation of the proper Lie groupoid\/ $\varGamma \tto M$ on the vector bundle\/ $E \to M$. Let\/ $S$ be a subset of\/ $M$ over which\/ $\lambda$ is a representation in the sense that\/ $\lambda_{1x} = \id$ for all\/ $x$ in\/ $S$ and\/ $\lambda_{g_1g_2} = \lambda_{g_1}\lambda_{g_2}$ for any two composable\/ $g_1$,~$g_2$ in\/ $\varGamma \mathbin| S$. Suppose that\/ $B \cap \varGamma S \subset S$ for some open set\/ $B$ satisfying\/ $\varGamma B = M$. There exists on\/ $E$ a metric such that\/ $\lambda_g$ is an orthogonal linear transformation of\/ $E_{sg}$ onto\/ $E_{tg}$ for every\/ $g \in \varGamma \mathbin| S$. \end{lem}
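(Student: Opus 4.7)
The plan is to adapt the standard averaging construction of invariant metrics on proper Lie groupoids to the present setting, in which $\lambda$ is only a representation when restricted to $\varGamma \mathbin| S$. The key device is to choose the cut-off function supported inside $B$, so that the hypothesis $B \cap \varGamma S \subset S$ forces all arrows contributing to the averaging at points of $S$ to actually lie in $\varGamma \mathbin| S$.

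I would first fix any differentiable left Haar system on $\varGamma \tto M$ and construct, by a mild variation of the cut-off construction recalled at the beginning of this section (with the hypothesis $\varGamma B = M$ replacing the usual role of $\varGamma M = M$), a normalizing function $c: M \to [0, \infty)$ with $\supp c \subset B$ and $\integral_{th = x} c(sh) \der h = 1$ for every $x \in M$. Pick any smooth metric $\mu_0$ on $E$, and define
\begin{equation*}
 \mu_x(e, e') := \integral_{th = x} c(sh) \mu_0^{sh}(\lambda_{h^{-1}} e, \lambda_{h^{-1}} e') \der h, \quad e, e' \in E_x.
\end{equation*}
By the differentiability of the Haar system this is a smooth cross-section of the bundle of symmetric bilinear forms on $E$, and positive semi-definite by construction.

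The main verification splits in two. \emph{(i) Invariance on $\varGamma \mathbin| S$.} For $g \in \varGamma \mathbin| S$, the substitution $h = gk$ (permitted by left invariance of the Haar system) turns $\mu_{tg}(\lambda_g e, \lambda_g e')$ into the integral computing $\mu_{sg}(e, e')$, provided one has $\lambda_{(gk)^{-1}}\lambda_g = \lambda_{k^{-1}}$. This identity follows from the representation property applied to the composable pair $k^{-1}, g^{-1}$ together with $\lambda_{g^{-1}}\lambda_g = \lambda_{1sg} = \id$, and requires $k^{-1} \in \varGamma \mathbin| S$, i.e., $sk \in S$. Here the hypothesis on $B$ enters: $c(sk) > 0$ forces $sk \in B$, and $tk = sg \in S$ forces $sk \in \varGamma S$, so $sk \in B \cap \varGamma S \subset S$. \emph{(ii) Positive-definiteness on $S$.} For $x \in S$ the same reasoning shows that every $h$ with $c(sh) > 0$ and $th = x$ lies in $\varGamma \mathbin| S$; consequently $\lambda_h$ is invertible, $\lambda_{h^{-1}} e \neq 0$ for $e \neq 0$, and the normalization of $c$ yields $\mu_x(e, e) > 0$.

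The form $\mu$ need not be positive-definite off $S$, so to obtain a bona fide metric on $E$ I would set $\tilde\mu := \mu + \chi\mu_0$, with $\chi: M \to [0, \infty)$ a smooth function that vanishes identically on $S$ and is strictly positive on $M \smallsetminus U$, where $U = \{x \in M : \mu_x \text{ is positive-definite}\}$ is the open set containing $S$ on which $\mu$ is already a metric. Such $\chi$ exists by standard partition-of-unity arguments (trivial in the applications, where $S$ will be open and its closure can be arranged to lie in $U$). Since $\chi$ vanishes on $S$, the correction term does not disturb the invariance of $\mu$ on $\varGamma \mathbin| S$, while $\chi\mu_0$ restores positivity wherever $\mu$ falls short. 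The chief conceptual obstacle is step (i) — making the partial representation property usable inside the averaging integral — which the combined hypotheses $B \cap \varGamma S \subset S$ and $\supp c \subset B$ precisely overcome.
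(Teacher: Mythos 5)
Your construction is the same as the paper's: average an arbitrary metric against a normalizing function supported in $B$, observe that the hypothesis $B \cap \varGamma S \subset S$ confines all arrows contributing to the integral at points of $S$ to $\varGamma \mathbin| S$ (which yields both the invariance over $\varGamma \mathbin| S$ and the positive-definiteness at points of $S$), then patch with a second metric away from $S$. Your steps (i) and (ii) are carried out correctly and for exactly the right reasons.

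The one gap is in the final patching step. A smooth $\chi \geq 0$ that vanishes on $S$ necessarily vanishes on $\overline{S}$, so for $\tilde\mu = \mu + \chi\mu_0$ to be positive definite everywhere you need $\mu$ itself to be positive definite on all of $\overline{S}$, i.e.\ $\overline{S} \subset U$; you have only established positive-definiteness at points of $S$, and an open set containing $S$ need not contain $\overline{S}$. Your parenthetical remark that this ``can be arranged'' in the applications is not a proof for the arbitrary subset $S$ of the statement --- and in fact nothing needs to be arranged, because the inclusion $\overline{S} \subset U$ is automatic: for $x \in \overline{S}$ the contributing arrows lie in $s^{-1}(\supp c) \cap t^{-1}(x) \subset s^{-1}(B) \cap t^{-1}(x)$, which is contained in the closure $\overline{\varGamma \mathbin| S}$ of $\varGamma \mathbin| S$ (lift a sequence $x_n \to x$ with $x_n \in S$ through the submersion $t$ and use $B \cap \varGamma S \subset S$ once more), and every $\lambda_h$ with $h \in \overline{\varGamma \mathbin| S}$ is still invertible with inverse $\lambda_{h^{-1}}$, since the identities $\lambda_{h^{-1}}\lambda_h = \id = \lambda_h\lambda_{h^{-1}}$ hold on $\varGamma \mathbin| S$ and are closed conditions. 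This continuity observation is precisely how the paper closes the argument; with it added, your proof is complete.
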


\begin{proof} Let us endow $E$ with an arbitrary $C^\infty$ vector bundle metric, say, $\phi$. For any choice of a differentiable Haar system on $\varGamma \tto M$ we can find some normalizing function $c$ such that $\supp c \subset B$; this follows at once from the obvious fact that, since $\varGamma B = M$, the prolongation by zero of any cut-off function for $\varGamma \mathbin| B \tto B$ is a cut-off function for $\varGamma \tto M$. If for all $x \in M$, $e_1$,~$e_2 \in E_x$ we put (Haar integral depending on parameters)
\begin{equation}
	\hat{\phi}_x(e_1,e_2) = \integral_{th=x} c(sh)\phi_{sh}(\lambda_{h^{-1}}e_1,\lambda_{h^{-1}}e_2) \der h,
\label{eqn:metric}
\end{equation}
we get a $C^\infty$ field of symmetric bilinear forms $\hat{\phi}_x$ on the fibers of $E$. We contend: $\hat{\phi}_x$ is positive definite for every $x$ in the closure $\overline{S}$ of $S$ and consequently in a whole open neighborhood of $\overline{S}$. To see it, notice first of all that $h$ cannot contribute to the above integral unless $h \in s^{-1}(\supp c)$. If $x$ lies in $\overline{S}$, the intersection $s^{-1}(\supp c) \cap t^{-1}(x) \subset s^{-1}(B) \cap t^{-1}(x)$ is contained in $\overline{\varGamma \mathbin| S}$ because $B \cap \varGamma S \subset S$. For every $h \in \overline{\varGamma \mathbin| S}$, the linear map $\lambda_h$ is invertible, its inverse being $\lambda_{h^{-1}}$. Our contention follows. Next, we have $\hat{\phi}_{tg}(\lambda_ge_1,\lambda_ge_2) = \hat{\phi}_{sg}(e_1,e_2)$ for all $g \in \varGamma \mathbin| S$, $e_1$,~$e_2 \in E_{sg}$; this follows as usual from the left invariance of the Haar system once we have observed that, since $B \cap \varGamma S \subset S$, only those $h$ that lie in $\varGamma \mathbin| S$ can contribute to the integral \eqref{eqn:metric} defining $\hat{\phi}_{tg}$. Then, by using a suitable partition of unity, we may change $\hat{\phi}$ outside the closed set $\overline{S}$ into an everywhere positive definite symmetric bilinear form that has all the properties we want. \end{proof}

Let us consider the orthogonal direct sum decomposition of $\T M \simeq L \oplus L^\bot$ determined by any Riemannian metric on $M$ invariant under the effect of $H \mathbin| U$. By invariance, $\lambda^H$ must carry $L^\bot \mathbin| U$ into itself. We contend that the composite morphism of vector bundles over $\varGamma$%
\begin{subequations}
\label{eqn:nu}
\begin{equation}
	s^*L^\bot \to s^*\T M \xto{\lambda^H} t^*\T M \to t^*L^\bot
\label{eqn:nu/a}
\end{equation}
is a representation of $\varGamma \tto M$ on $L^\bot$. The argument is a simple generalization of the argument we used in the proof of Corollary \ref{thm:rank=0}. We start by noting that for each $g$ in $\varGamma$ there is a unique linear map $\nu_g: L^\bot_{sg} \to L^\bot_{tg}$ which for every linear splitting $\eta_g$ of $\T_gs$ makes the composition
\begin{equation}
	\T_{sg}M \to L^\bot_{sg} \xto{\nu_g} L^\bot_{tg}
\text{\quad equal to}\quad
	\T_{sg}M \xto{\eta_g} \T_g\varGamma \xto{\T_gt} \T_{tg}M \to L^\bot_{tg}.
\label{eqn:nu/b}
\end{equation}
\end{subequations}
($\T_gt$ carries $\ker\T_gs$ into $L_{tg}$, so that the composition is the same for all $\eta_g$, and $L_{sg}$ obviously lies in its kernel.) Since for each pair of arrows $g_1$,~$g_2$ with $sg_1 = tg_2$ both $\eta^H_{g_1g_2}$ and $\eta^H_{g_1}\lambda^H_{g_2} \cdot \eta^H_{g_2}$ are splittings of $\T_{g_1g_2}s$, we have $\nu_{g_1g_2} = \nu_{g_1}\nu_{g_2}$. Similarly $\nu_{1x} = \id$.

By definition $\der t: \ker\der s \onto t^*L$ is an epimorphism of vector bundles over $\varGamma$ and therefore splits; let us fix an arbitrary splitting, say, $\xi: t^*L \to \ker\der s$. Let us write $\pr: \T M \to L$ for the orthogonal projection onto $L$ relative to our Riemannian metric on $M$. The morphism
\begin{equation*}
	\eta^H + \xi \circ (\rho \circ s^*\pr - t^*\pr \circ \lambda^H): s^*\T M \longto \T\varGamma
\end{equation*}
is the horizontal lift for a new connection on $\varGamma \tto M$ which agrees with $H$ along $U$ since $\lambda^H$ carries $L^\bot \mathbin| U$ into itself. If we let $\nu$ denote the representation of $\varGamma \tto M$ on $L^\bot$ given by \eqref{eqn:nu}, at the expense of replacing $H$ with this new connection we may assume that the matrix of $\lambda^H$ relative to the direct sum decomposition $\T M \simeq L \oplus L^\bot$ is
\begin{equation*}
	\lambda^H =%
\begin{pmatrix}
	\rho &  0
\\	  0  & \nu
\end{pmatrix}
\end{equation*}
and hence that $\lambda^H$ is a representation. As we observed during the proof of Lemma \ref{lem:metric}, for each $B$ as in \eqref{eqn:domain} we can find some normalizing function $c$ such that $\supp c \subset B$. If we consider the associated averaging operator, then by \eqref{lem:hat|Z} the mean ratio of $H$ is a multiplicative connection $\hat{H}$ such that $\hat{H} \mathbin| U = H \mathbin| U$: the proof of Theorem \ref{thm:regular} is complete. Note that the effect of $\hat{H}$ is given by the above matrix, so the longitudinal effect of $\hat{H}$ coincides with $\rho$.

\subsubsection*{Supplementary remarks}

The version of Theorem \ref{thm:regular} that we have just proved (as opposed to the version enunciated in subsection \ref{sub:regular}) owes most of its relevance to the role it plays in extending the considerations of subsection \ref{sub:algorithm} from the source-proper to the general proper case. Unfortunately, even a barely satisfactory discussion of this role goes beyond what could still be included in the present paper without unreasonably affecting its overall balance. We shall therefore limit ourselves to a brief commentary on how the current enhanced version of Theorem \ref{thm:regular} may be used to vindicate the claims we made at the beginning of the paragraph preceding Corollary \ref{thm:rank=0}.

Let $\varGamma \tto M$ be a proper Lie groupoid, let $C$ be an invariant closed subset of $M$, and let $V$ be an open neighborhood of $C$. While $V$ need not contain any \emph{invariant} neighborhoods of $C$, the following is always true.

\begin{lem}\label{lem:domain} There exists an open neighborhood\/ $U$ of\/ $C$ with\/ $\overline{U} \subset V$ satisfying\/ \textup{\eqref{eqn:domain}}. \end{lem}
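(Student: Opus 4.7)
The plan is to produce $U$ as the intersection of two open neighborhoods of $C$: an \emph{invariant} one $W$ contained in the saturation $\varGamma V$ (coming from normality of the orbit space), and a \emph{non-invariant} one $V_1$ contained in $V$ with $\overline{V_1} \subset V$ and with the property that $V_1$ meets every orbit passing through a certain closed invariant buffer zone around $C$. Then $B := V_1 \cup (M \setminus \overline{W})$ will do the job.

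First I would verify that the orbit space $X := M/\varGamma$ is normal. It is Hausdorff by properness of $\varGamma \tto M$; second countable, since the quotient map $\pi: M \to X$ is an open continuous surjection from a second countable space; and locally compact, because compact neighborhoods in $M$ map to compact neighborhoods in $X$ via the open map $\pi$. A locally compact, second countable, Hausdorff space is paracompact (indeed metrizable), hence normal. Applying normality to the closed subset $\pi(C)$ inside the open subset $\pi(V)$ yields an open $\tilde W \subset X$ with $\pi(C) \subset \tilde W \subset \overline{\tilde W} \subset \pi(V)$. Putting $W := \pi^{-1}(\tilde W)$ (open invariant) and $F := \pi^{-1}(\overline{\tilde W})$ (closed invariant), I obtain the chain $C \subset W \subset \overline{W} \subset F \subset \varGamma V$.

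The technical heart of the argument is the construction of the set $V_1$. For each orbit $O \subset F$, pick $y_O \in O \cap V$ (nonempty since $O \subset \varGamma V$) and a relatively compact open neighborhood $N_O$ of $y_O$ with $\overline{N_O} \subset V$. The family $\{\pi(N_O)\}$ is an open cover of the closed subset $\overline{\tilde W}$ in the paracompact space $X$, so I can extract from it a locally finite refinement $\{\tilde U_i\}$ with $\tilde U_i \subset \pi(N_{O_i})$. Setting $M_i := N_{O_i} \cap \pi^{-1}(\tilde U_i)$, one has $\pi(M_i) = \tilde U_i$ (any orbit in $\tilde U_i$ necessarily passes through $N_{O_i}$, hence through $M_i$) and $\overline{M_i} \subset \overline{N_{O_i}} \subset V$; moreover the local finiteness of $\{\tilde U_i\}$ in $X$ lifts to local finiteness of $\{M_i\}$ in $M$ via invariant preimages of $X$-neighborhoods. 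Thus $\bigcup_i M_i$ is open with closure $\bigcup_i \overline{M_i} \subset V$, and its $\pi$-image covers $\overline{\tilde W}$. Adjoining an ordinary open neighborhood $V_0$ of $C$ with $\overline{V_0} \subset V$ (by normality of $M$) delivers the required set $V_1 := V_0 \cup \bigcup_i M_i$.

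Finally I set $U := W \cap V_1$ and $B := V_1 \cup (M \setminus \overline{W})$. Then $U$ is an open neighborhood of $C$ with $\overline{U} \subset \overline{V_1} \subset V$, and $B$ is open. Since $U \subset W$ and $W$ is invariant, $\varGamma U \subset W$, so $B \cap \varGamma U = V_1 \cap W = U$, using that $M \setminus \overline{W}$ is disjoint from $W$. Since the complement $M \setminus \overline{W}$ is itself invariant, $\varGamma B = \varGamma V_1 \cup (M \setminus \overline{W}) \supset F \cup (M \setminus \overline{W}) = M$. The main obstacle throughout is the construction of $V_1$: one must simultaneously impose $\overline{V_1} \subset V$ (making $V_1$ ``thin'') and $\varGamma V_1 \supset F$ (making it hit every orbit in the possibly non-compact closed invariant set $F$), and the detour through the paracompactness of the orbit space $X$ is what makes this balancing act possible.
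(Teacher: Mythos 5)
Your proof is correct. It rests on the same two pillars as the paper's: normality of the orbit space $M/\varGamma$ (to produce an invariant closed buffer between $C$ and $\varGamma V$), and an open subset of $M$ with closure in $V$ whose saturation covers that buffer. The difference is in how the second ingredient is obtained. The paper gets it for free from cut-off functions: it chooses a cut-off function $c$ with $\supp c \subset V \cup (M \smallsetminus \overline{S})$, sets $B = \{c > 0\}$, and takes $U = (B \cap S) \cup W$ for a suitable $W$; the existence of such a $c$ is a tool the paper has already put in place (every open set whose saturation is all of $M$ supports a cut-off function, by Tu-style arguments). You instead construct the set $V_1$ by hand, via a locally finite refinement in the paracompact orbit space of the cover by images of relatively compact neighborhoods $N_O \subset V$, and then assemble $U = W \cap V_1$ and $B = V_1 \cup (M \smallsetminus \overline{W})$. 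In effect you have inlined the topological content of the cut-off-function existence proof, so your argument is self-contained where the paper's is a one-line appeal to an earlier lemma; the resulting sets differ in shape but both satisfy \eqref{eqn:domain}. The one step worth making explicit is your claim that $M \smallsetminus \overline{W}$ is invariant: this requires the fact (true, and used elsewhere in the paper) that the closure of an invariant set is invariant, which follows from the openness of $s$ and $t$; alternatively, replacing $\overline{W}$ by the invariant closed set $F$ in the definition of $B$ sidesteps the point entirely, since $W \subset F$ already gives $(M \smallsetminus F) \cap W = \emptyset$ and $\varGamma V_1 \cup (M \smallsetminus F) = M$.
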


\begin{proof} The orbit space $M/\varGamma$ is locally compact, second countable, and, by properness, Hausdorff. It is therefore normal. Hence, there must be an invariant open set $S = \varGamma S$ containing $C$ with $\overline{S} \subset \varGamma V$. Let $c$ be any cut-off function for $\varGamma \tto M$ with $\supp c \subset V \cup (M \smallsetminus \overline{S})$, and let $B$ be the open set on which $c > 0$. Let $W$ be any open neighborhood of $C$ with $\overline{W} \subset S \cap V$. Then $U = (B \cap S) \cup W$ satisfies $\varGamma U = S$ and, thus, $B \cap \varGamma U \subset U$, and $\overline{U} \subset (\supp c \cap \overline{S}) \cup \overline{W} \subset V$. \end{proof}

Let $U$ be as in the lemma. Let $Z$ be an invariant submanifold of $M$. It is completely obvious that $U \cap Z$ satisfies \eqref{eqn:domain} relative to $\varGamma \mathbin| Z \tto Z$ and that the relative closure of $U \cap Z$ within $Z$ is contained in $V \cap Z$. Now, let $Z = M_r$ be the union of all $r$-dimensional orbits---cf.~\eqref{eqn:12B.19.2}---and let $\varPhi$ be a multiplicative connection on $\varGamma \mathbin| V \tto V$. Suppose that among all $U$ as in the lemma we can find one for which we can prolong the longitudinal effect of $\varPhi \mathbin| U \cap Z$ to a longitudinal representation of the whole $\varGamma \mathbin| Z \tto Z$. (When $r = 0$, this is trivially the case for every $U$. When $r = 1$, we can find one such $U$ provided $\varGamma \tto M$ is source connected; we omit the proof.) We can then invoke our improved version of Theorem \ref{thm:regular} in order to conclude that $\varPhi \mathbin| U \cap Z$ can be extended to all of $\varGamma \mathbin| Z \tto Z$ as a multiplicative connection.

\section{Uniform convergence estimates for recursive averaging}\label{sec:estimates}

As in the previous section let $\varGamma \tto M$ be a proper Lie groupoid endowed with a differentiable Haar system and let $c$ be a normalizing function. Among all nondegenerate connections $H$ on $\varGamma \tto M$, the multiplicative ones are invariant under the averaging operator $H \mapsto \hat{H}$ associated as in Definition \ref{defn:12B.11.2} with these data. In line with celebrated fixed-point theorems in functional analysis inspired by Newton's method such as those described in \cite{Ham82}, one may then expect that whenever $H$ is in a suitable sense “close” to being multiplicative the sequence $H$,~$\hat{H}$,~$\hat{\hat{H}}$,~$\dotsc$ of iterated mean ratios of $H$ (exists and) converges towards some fixed point for this operator which is also a multiplicative connection; actually, since as we know from subsection \ref{sub:proof1} $\hat{H}$ is multiplicative as soon as the effect of $H$ is a representation, one might even suspect that such convergence already takes place whenever the effect of $H$ is “close” to being a representation. As we shall see in the first part of this section and in the next section, these expectations turn out to be correct. Even better, it turns out that in order to show they are, one does not need any of the powerful and sophisticated machinery of \cite{Ham82}; instead, by taking advantage of the nice computational properties of our averaging operator, we shall give a self-contained elementary proof which, besides requiring a minimal amount of background, is more concise and direct than any comparable argument known to us in the literature (cf.~Remarks \ref{rems:Karoubi} below).

In order to simplify our notations we shall deal at first with arbitrary pseudo-representations of $\varGamma \tto M$. Let $E$ be an arbitrary smooth vector bundle over $M$.

\begin{defn}\label{npar:12B.12.1} Let $\lambda: s^*E \simto t^*E$ be a pseudo-representation of $\varGamma \tto M$ on $E$ which is invertible in the sense that each $\lambda_g$ is a bijection of $E_{sg}$ onto $E_{tg}$. By the considerations at the beginning of section \ref{sec:averaging} about Haar integrals depending on parameters, the formula
\begin{equation}
	\hat{\lambda}_g = \integral_{th=sg} c(sh)\lambda_{gh} \circ (\lambda_h)^{-1} \der h
\label{eqn:12B.12.4}
\end{equation}
defines a new pseudo-representation $\hat{\lambda}$ of $\varGamma \tto M$ on $E$, obviously unital, hereafter called the \emph{mean ratio} of $\lambda$. \end{defn}

The correspondence $\lambda \mapsto \hat{\lambda}$, from invertible to unital pseudo-representations, is the analog of our averaging operator, $H \mapsto \hat{H}$, from nondegenerate to unital connections (Definition \ref{defn:12B.11.2}). The above expression for $\hat{\lambda}_g$ generalizes, and is motivated by, our formula \eqref{eqn:12B.11.7} for the effect of $\hat{H}$ in terms of that of $H$:
\begin{equation}
	\lambda^{\hat{H}} = \widehat{(\lambda^H)}.
\label{eqn:12B.11.7*}
\end{equation}

Let us endow $E$ with an arbitrary vector bundle metric of class $C^\infty$, say, $\phi$. For each pair of base points $x$,~$y \in M$, an \emph{operator norm} $\lVert\blank\rVert_{x,y}$ is induced by $\phi$ on the vector space $L(E_x,E_y)$ of all linear maps $\lambda: E_x \to E_y$ through the customary formula
\begin{equation}
	\lVert\lambda\rVert_{x,y} = \sup_{\lvert e\rvert_x\leq 1}{}\lvert\lambda e\rvert_y,
\label{eqn:xy-norms}
\end{equation}
where $\lvert e\rvert_x = \sqrt{\phi_x(e,e)}$ denotes the norm on $E_x$ associated with the inner product $\phi_x$. Of course, these operator norms $\lVert\blank\rVert_{x,y}$ satisfy the following inequalities, for all $x$,\ $y$,\ $z$ in $M$;
\begin{equation}
	\lVert\mu \circ \lambda\rVert_{x,z} \leq \lVert\lambda\rVert_{x,y}\lVert\mu\rVert_{y,z}
\label{eqn:12B.12.6}
\end{equation}
in particular, $\lVert\blank\rVert_x = \lVert\blank\rVert_{x,x}$ turns the ring of endomorphisms $\End(E_x) = L(E_x,E_x)$ into a unital Banach algebra.

\begin{lem}\label{lem:12B.12.3} Let\/ $A$ be a unital Banach algebra, with unit element\/ $1$ and norm\/ $\lVert\blank\rVert$. Let\/ $0 \leq r < 1$ be a real constant. For every element\/ $a$ of\/ $A$ such that\/ $\lVert a\rVert \leq r$, the element\/ $1 - a$ is invertible, and
\begin{equation*}
	\lVert(1 - a)^{-1} - 1\rVert \leq r(1 - r)^{-1}.
\end{equation*} \end{lem}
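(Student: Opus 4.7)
The plan is to invoke the standard Neumann series construction. First I would define the formal geometric series $\sum_{n=0}^\infty a^n$ in $A$, with the convention $a^0 = 1$. By submultiplicativity of the norm in a unital Banach algebra, $\lVert a^n\rVert \leq \lVert a\rVert^n \leq r^n$, and since $r < 1$, the numerical series $\sum r^n$ converges. Hence the partial sums $s_N = \sum_{n=0}^N a^n$ form a Cauchy sequence in $A$, and by completeness admit a limit $b \in A$.

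Next I would verify that $b = (1-a)^{-1}$. Since multiplication in $A$ is continuous, passing to the limit in the telescoping identities
\begin{equation*}
(1 - a)s_N = 1 - a^{N+1}, \qquad s_N(1 - a) = 1 - a^{N+1},
\end{equation*}
and using $\lVert a^{N+1}\rVert \leq r^{N+1} \to 0$, yields $(1-a)b = b(1-a) = 1$. Thus $1-a$ is invertible with inverse $b$.

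Finally, to obtain the quantitative bound, I would observe that $b - 1 = \sum_{n=1}^\infty a^n$ (again a convergent series in $A$). By the triangle inequality extended to convergent series and the estimate $\lVert a^n\rVert \leq r^n$,
\begin{equation*}
\lVert(1-a)^{-1} - 1\rVert = \lVert b - 1\rVert \leq \sum_{n=1}^\infty r^n = \frac{r}{1 - r},
\end{equation*}
which is exactly the claim.

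I do not anticipate any substantive obstacle here: the argument is textbook, and the only non-trivial ingredients are the completeness of $A$, the submultiplicativity $\lVert xy\rVert \leq \lVert x\rVert\lVert y\rVert$, and the summability of the geometric series $\sum r^n$. The only mild point of attention is that in the paper's convention $\lVert a\rVert$ is bounded by $r$ rather than being equal to it, which only strengthens the estimate since $\sum_{n=1}^\infty \lVert a\rVert^n \leq r/(1-r)$.
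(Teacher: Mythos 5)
Your proposal is correct and coincides with the paper's own argument: both invoke the Neumann series $(1-a)^{-1} = 1 + a + a^2 + \dotsb$ and bound $\lVert(1-a)^{-1}-1\rVert$ by the geometric tail $\sum_{n\geq 1}\lVert a\rVert^n \leq r(1-r)^{-1}$. You merely spell out the convergence and telescoping details that the paper takes for granted.
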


\begin{proof} Since $\lVert a\rVert < 1$, the element $1 - a$ is invertible, with inverse
\begin{equation*}
	(1 - a)^{-1} = 1 + a + a^2 + a^3 + \dotsb.
\end{equation*}
It follows that $\lVert(1 - a)^{-1} - 1\rVert \leq \lVert a\rVert + \lVert a\rVert^2 + \lVert a\rVert^3 + \dotsb = \lVert a\rVert(1 - \lVert a\rVert)^{-1} \leq r(1 - r)^{-1}$. \end{proof}

For every pseudo-representation $\lambda$ of $\varGamma \tto M$ on $E$, or, more generally, continuous cross-section $\lambda$ of the vector bundle $L(s^*E,t^*E) \to \varGamma$, let us set%
\begin{subequations}
\label{eqn:12B.12.8}
\begin{alignat}{2}
	b(\lambda)&= \sup_{g\in\varGamma}{}\lVert\lambda_g\rVert_{sg,tg} & &\llap{and}
\label{eqn:12B.12.8a}\\
	r(\lambda)&= \sup_{x\in M}{}\lVert\id - \lambda_{1x}\rVert_x + \sup_{(g_1,g_2)\in\varGamma\ftimes{s}{t}\varGamma}{}\lVert\lambda_{g_1g_2} - \lambda_{g_1}\lambda_{g_2}\rVert_{sg_2,tg_1}.
\label{eqn:12B.12.8b}
\end{alignat}
\end{subequations}

\begin{prop}\label{npar:12B.12.7} Let\/ $\lambda$ be a pseudo-representation of\/ $\varGamma \tto M$ on\/ $E$ for which\/ $r(\lambda) < 1$. Then\/ $\lambda$ is invertible, so that\/ $\hat{\lambda}$ is defined. Moreover, whenever\/ $b(\lambda) < \infty$, the following estimates hold.%
\begin{subequations}
\label{eqn:12B.12.10}
\begin{gather}
	\lVert\hat{\lambda}_g\rVert_{sg,tg} \leq \frac{b(\lambda)}{1 - r(\lambda)}
\label{eqn:12B.12.10a}\\
	\lVert\hat{\lambda}_{g_1g_2} - \hat{\lambda}_{g_1}\hat{\lambda}_{g_2}\rVert_{sg_2,tg_1} \leq 2\biggl(\frac{b(\lambda)}{1 - r(\lambda)}\biggr)^2r(\lambda)^2
\label{eqn:12B.12.10b}
\end{gather}
\end{subequations} \end{prop}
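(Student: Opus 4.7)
First, I would establish invertibility of each $\lambda_g$. Writing $\id - \lambda_{g^{-1}}\lambda_g = (\id - \lambda_{1\,sg}) + (\lambda_{g^{-1}g} - \lambda_{g^{-1}}\lambda_g)$ and bounding each summand by the corresponding supremum in the definition of $r(\lambda)$ yields $\lVert\id - \lambda_{g^{-1}}\lambda_g\rVert_{sg} \leq r(\lambda) < 1$. Lemma~\ref{lem:12B.12.3} then makes $\lambda_{g^{-1}}\lambda_g$ invertible in $\End(E_{sg})$ with $\lVert(\lambda_{g^{-1}}\lambda_g)^{-1}\rVert_{sg} \leq 1/(1-r(\lambda))$. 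The symmetric argument gives invertibility of $\lambda_g\lambda_{g^{-1}}$, so $\lambda_g$ is bijective with $(\lambda_g)^{-1} = (\lambda_{g^{-1}}\lambda_g)^{-1}\lambda_{g^{-1}}$; when $b(\lambda) < \infty$ this already yields $\lVert(\lambda_g)^{-1}\rVert_{tg,sg} \leq b(\lambda)/(1-r(\lambda))$.

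For (a), I would introduce the decomposition $\lambda_{gh}(\lambda_h)^{-1} = \lambda_g + \tau_g(h)$ with
\begin{equation*}
\tau_g(h) := [\lambda_{gh} - \lambda_g\lambda_h](\lambda_h)^{-1},
\end{equation*}
whose operator norm is at most $B := r_2\,b(\lambda)/(1-r(\lambda))$, where $r_2$ denotes the second summand in the definition of $r(\lambda)$ (so $r_2 \leq r(\lambda)$). Integrating and invoking the normalization $\integral c(sh)\,\der h = 1$ gives $\hat{\lambda}_g = \lambda_g + \integral c(sh)\tau_g(h)\,\der h$, and the triangle inequality yields
\begin{equation*}
\lVert\hat{\lambda}_g\rVert_{sg,tg} \leq b(\lambda) + B = \frac{b(\lambda)(1 - r(\lambda) + r_2)}{1 - r(\lambda)} \leq \frac{b(\lambda)}{1 - r(\lambda)}.
\end{equation*}

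The main obstacle is (b). A direct expansion of $\hat{\lambda}_{g_1g_2} - \hat{\lambda}_{g_1}\hat{\lambda}_{g_2}$ produces several first-order-in-$r(\lambda)$ terms that do not obviously cancel. The decisive step is to invoke the left invariance of the Haar system via the substitution $k = g_2 h$ (bijecting $\varGamma^{sg_1}$ with $\varGamma^{sg_2}$) to rewrite
\begin{equation*}
\integral c(sk)\,\tau_{g_1}(k)\,\der k = \integral c(sh)\,\tau_{g_1}(g_2h)\,\der h.
\end{equation*}
Combined with the elementary identity $(\lambda_h)^{-1} - (\lambda_{g_2h})^{-1}\lambda_{g_2} = (\lambda_{g_2h})^{-1}\tau_{g_2}(h)$, this collapses the first-order contributions and produces the quadratic-in-$\tau$ identity
\begin{equation*}
\hat{\lambda}_{g_1g_2} - \hat{\lambda}_{g_1}\hat{\lambda}_{g_2} = \integral c(sh)\,\tau_{g_1}(g_2h)\,\tau_{g_2}(h)\,\der h - \iint c(sk)\,c(sh)\,\tau_{g_1}(k)\,\tau_{g_2}(h)\,\der k\,\der h.
\end{equation*}
Each integrand is a product of two $\tau$-factors with norms $\leq B$, and each integral is taken against a probability measure, so the norm of the difference is bounded by $2B^2 \leq 2\bigl(b(\lambda)/(1-r(\lambda))\bigr)^2\,r(\lambda)^2$, proving (b). Locating this precise change of variables and the small algebraic identity exhibiting the second-order cancellation is the crux of the argument.
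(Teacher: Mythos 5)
Your proposal is correct and follows essentially the same route as the paper: the same invertibility argument via Lemma~\ref{lem:12B.12.3} and the bound $\lVert(\lambda_g)^{-1}\rVert \leq b(\lambda)/(1-r(\lambda))$, the same first-order decomposition $\hat{\lambda}_g = \lambda_g + \integral c(sh)\tau_g(h)\der h$ for \eqref{eqn:12B.12.10a}, and for \eqref{eqn:12B.12.10b} the same use of left invariance (the substitution $k = g_2h$) together with the normalizing-function property to reach exactly the paper's quadratic identity \eqref{eqn:12B.12.5b}, estimated in the same way by $2B^2$. The only differences are cosmetic (naming the defect $\tau_g(h)$, organizing the cancellation around a small algebraic identity instead of the paper's add-and-subtract computation, and tracking the summand $r_2$ separately).
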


\begin{proof} Let us set $r = r(\lambda) < 1$. We have $\lVert\id - \lambda_{g^{-1}}\lambda_g\rVert_{sg} \leq \lVert\id - \lambda_{1sg}\rVert_{sg} + \lVert\lambda_{g^{-1}g} - \lambda_{g^{-1}}\lambda_g\rVert_{sg} \leq r < 1$ for every $g$. Since $\End(E_{sg})$ equipped with the norm $\lVert\blank\rVert_{sg}$ is a unital Banach algebra, by the previous lemma applied to the element $a = \id - \lambda_{g^{-1}}\lambda_g$ of $\End(E_{sg})$ we have that $\lambda_{g^{-1}}\lambda_g$ is an invertible element of $\End(E_{sg})$ and so $\lambda_g$ is an injective linear map. Similarly, by considering $\lambda_g\lambda_{g^{-1}}$, we see that $\lambda_g$ is surjective. This proves the invertibility of $\lambda$.

From Lemma \ref{lem:12B.12.3} applied to $a = \id - \lambda_{g^{-1}}\lambda_g \in \End(E_{sg})$ we get (omitting norm subscripts)
\begin{equation*}
 \lVert(\lambda_{g^{-1}}\lambda_g)^{-1} - \id\rVert
	= \bigl\lVert\bigl(\id - (\id - \lambda_{g^{-1}}\lambda_g)\bigr)^{-1} - \id\bigr\rVert
	\leq r(1 - r)^{-1}.
\end{equation*}
Using the inequalities \eqref{eqn:12B.12.6} we then obtain
\begin{equation*}
 \lVert(\lambda_g)^{-1} - \lambda_{g^{-1}}\rVert
	= \bigl\lVert\bigl((\lambda_{g^{-1}}\lambda_g)^{-1} - \id\bigr) \circ \lambda_{g^{-1}}\bigr\rVert
	\leq r(1 - r)^{-1}\lVert\lambda_{g^{-1}}\rVert,
\end{equation*}
whence
\begin{equation}
 \lVert(\lambda_g)^{-1}\rVert
	\leq \lVert\lambda_{g^{-1}}\rVert + \lVert(\lambda_g)^{-1} - \lambda_{g^{-1}}\rVert
	\leq \left(1 + \frac{r}{1 - r}\right)\lVert\lambda_{g^{-1}}\rVert
	\leq \frac{b(\lambda)}{1 - r(\lambda)}.
\label{eqn:12B.12.9a}
\end{equation}
From \eqref{eqn:12B.12.6} and the latter inequality we conclude that for every composable pair of arrows $g$,~$h$
\begin{equation}
 \lVert\lambda_{gh} \circ (\lambda_h)^{-1} - \lambda_g\rVert
	\leq \lVert\lambda_{gh} - \lambda_g\lambda_h\rVert\:\lVert(\lambda_h)^{-1}\rVert
	\leq r(\lambda)\frac{b(\lambda)}{1 - r(\lambda)}.
\label{eqn:12B.12.9b}
\end{equation}

Now, because of the normalizing function property (the equality $\integral_{th=x} c(sh) \der h = 1$), we have%
\begin{subequations}
\label{eqn:12B.12.5}
\begin{align}
 \hat{\lambda}_g &
	= \integral_{th=sg} c(sh)\lambda_g \der h + \integral_{th=sg} c(sh)[\lambda_{gh} \circ (\lambda_h)^{-1} - \lambda_g] \der h \notag\\ &
	= \lambda_g + \integral_{th=sg} c(sh)[\lambda_{gh} \circ (\lambda_h)^{-1} - \lambda_g] \der h
\label{eqn:12B.12.5a}
\end{align}
and we can estimate the last integral by the sup norm of the bracketed term inside its integrand. The first one of the two inequalities \eqref{eqn:12B.12.10} then follows from \eqref{eqn:12B.12.9b}. As to the other inequality, \eqref{eqn:12B.12.10b}, because of the Haar system's left invariance and, again, of the normalizing function property, we have (once more omitting part of the notations for the sake of conciseness and readability)
\begin{align}
 \hat{\lambda}_{g_1g_2} - \hat{\lambda}_{g_1}\hat{\lambda}_{g_2} &
	= \hat{\lambda}_{g_1g_2} - \left(\integral c(sh')\lambda_{g_1h'}\lambda_{h'}^{-1} \der h'\right) \circ \hat{\lambda}_{g_2} \notag\\ &\hskip-2em
	= \hat{\lambda}_{g_1g_2} - \integral c(sh)\lambda_{g_1g_2h}\lambda_{g_2h}^{-1} \circ \hat{\lambda}_{g_2} \der h \notag\\
\begin{split} &\hskip-2em
	= \integral c(sh)\lambda_{g_1g_2h}\lambda_h^{-1} \der h \\ &\hskip-2em\justify
	  - \integral c(sh)\lambda_{g_1g_2h}\lambda_{g_2h}^{-1} \circ \lambda_{g_2} \der h
	  - \integral c(sh)\lambda_{g_1} \circ \lambda_{g_2h}\lambda_h^{-1} \der h
	  + \lambda_{g_1}\lambda_{g_2} \\ &\hskip-2em\justify
	  + \integral c(sh)\lambda_{g_1g_2h}\lambda_{g_2h}^{-1} \circ \lambda_{g_2} \der h
	  + \integral c(sk)\lambda_{g_1} \circ \lambda_{g_2k}\lambda_k^{-1} \der k
	  - \lambda_{g_1}\lambda_{g_2} \\ &\hskip-2em\justify
	  - \iintegral c(sh)c(sk)\lambda_{g_1g_2h}\lambda_{g_2h}^{-1} \circ \lambda_{g_2k}\lambda_k^{-1} \der h \der k
\end{split}
\notag\\ &\hskip-4em\!%
\begin{aligned}[b] &
	= \integral_{th=sg_2} c(sh)[\lambda_{g_1g_2h}\lambda_{g_2h}^{-1} - \lambda_{g_1}] \circ [\lambda_{g_2h}\lambda_h^{-1} - \lambda_{g_2}] \der h \\ &\justify
	  - \iintegral_{\substack{th=sg_2\\ tk=sg_2}} c(sh)c(sk)[\lambda_{g_1g_2h}\lambda_{g_2h}^{-1} - \lambda_{g_1}] \circ [\lambda_{g_2k}\lambda_k^{-1} - \lambda_{g_2}] \der h \der k
\end{aligned}
\label{eqn:12B.12.5b}
\end{align}
\end{subequations}
and we can estimate each integral by the sup norm of the composite bracketed expression inside its integrand, whence on account of \eqref{eqn:12B.12.6} and \eqref{eqn:12B.12.9b} our inequality. \end{proof}

The above proposition suggests that, whenever $\lambda$ is ``close enough'' to being a representation, its mean ratio $\hat{\lambda}$ must be ``even closer'' to being one. Let us make this idea precise.

\begin{defn}\label{defn:12B.14.1} The pseudo-representation $\lambda$ of $\varGamma \tto M$ on $E$ is a \emph{near representation} if, for some choice of a vector bundle metric on $E$, the quantities $b(\lambda)$ and $r(\lambda)$ given by \eqref{eqn:12B.12.8} turn out to be both finite and to be related as follows.
\begin{equation}
	r(\lambda) \leq \min\{\tfrac{1}{4},\tfrac{1}{9}b(\lambda)^{-2}\}
\label{eqn:12B.14.1}
\end{equation} \end{defn}

We observe that a near representation $\lambda$ is necessarily invertible (by Proposition \ref{npar:12B.12.7}), so that $\hat{\lambda}$ is defined. We claim that the (unital) pseudo-representation $\hat{\lambda}$ is itself a near representation. To see this, let us set $b_0 = b(\lambda)$, $r_0 = r(\lambda)$, $b_1 = b(\hat{\lambda})$, and $r_1 = r(\hat{\lambda})$. By \eqref{eqn:12B.12.10a} and \eqref{eqn:12B.14.1}, we have $b_1 \leq b_0/(1 - r_0) \leq \frac{4}{3}b_0$, in particular, $b_1$ is finite. Since $\hat{\lambda}$ is unital, we deduce from \eqref{eqn:12B.12.10b} that $r_1 \leq 2[b_0/(1 - r_0)]^2r_0^2 \leq 2\frac{16}{9}b_0^2r_0^2$ and therefore from \eqref{eqn:12B.14.1} that $r_1 \leq 2\frac{16}{9}\frac{1}{9}r_0 \leq \frac{1}{2}r_0 \leq 1/8$ and
\begin{equation*}
 b_1^2r_1
	\leq 2(\tfrac{16}{9} \cdot b_0^2r_0)^2
	\leq 2(2 \cdot \tfrac{1}{9})^2
	< 1/9.
\end{equation*}
Our claim is proven. For the record: we have shown in addition that besides having $b_1 \leq \frac{4}{3}b_0$ we also have $r_1 \leq \frac{1}{2}r_0$.

Thus, whenever $\lambda$ is a near representation, we can go on forever taking mean ratios: we obtain a whole sequence $\hat{\lambda}^0$,~$\hat{\lambda}^1$,~$\hat{\lambda}^2$,~$\dotsc$ of \emph{averaging iterates} of $\lambda$ which we construct recursively by setting $\hat{\lambda}^0 = \lambda$ and $\hat{\lambda}^{i+1} = \widehat{(\hat{\lambda}^i)}$ for all $i$. We contend that this sequence is \emph{Cauchy} and hence \emph{convergent} in the Banach space of all those cross-sections $\lambda$ of the vector bundle $L(s^*E,t^*E) \to \varGamma$ that are continuous and that are \emph{bounded} i.e.~have \emph{finite\/ $C^0$-norm:} $\lVert\lambda\rVert_{C^0} = b(\lambda) < \infty$. To show this, let us set $b_i = b(\hat{\lambda}^i)$ and $r_i = r(\hat{\lambda}^i)$. By recursively invoking the last sentence of the previous paragraph, we see that $b_i \leq (\frac{4}{3})^ib_0$ and $r_i \leq (\frac{1}{2})^ir_0$. Then, for every $g$ in $\varGamma$, by \eqref{eqn:12B.12.5a} and \eqref{eqn:12B.12.9b},
\begin{align*}
 \lVert\hat{\lambda}^{i+1}_g - \hat{\lambda}^i_g\rVert &
	\leq \integral_{th=sg} c(sh)\lVert\hat{\lambda}^i_{gh} \circ (\hat{\lambda}^i_h)^{-1} - \hat{\lambda}^i_g\rVert \der h \\ &
	\leq \frac{1}{1 - r_i} \cdot b_ir_i
	\leq \frac{1}{1 - r_0} \cdot (\tfrac{4}{3})^i(\tfrac{1}{2})^ib_0r_0
	\leq (\tfrac{2}{3})^ib_0/3.
\end{align*}
It follows at once that the sequence $\hat{\lambda}^0$,~$\hat{\lambda}^1$,~$\hat{\lambda}^2$,~$\dotsc$ is Cauchy, and therefore converges to a unique continuous (bounded) cross-section $\hat{\lambda}^\infty$ of $L(s^*E,t^*E)$. Since $C^0$-convergence implies pointwise convergence, for each $g$ we have $\hat{\lambda}^\infty_g = \lim \hat{\lambda}^i_g$ in the finite-dimensional vector space $L(E_{sg},E_{tg})$. It follows that $\hat{\lambda}^\infty_{1x} = \lim\hat{\lambda}^i_{1x} = \id$ for all $x$ in $M$, because $\hat{\lambda}^i$ is unital for every $i \geq 1$, and also that $\hat{\lambda}^\infty_{g_1g_2} = \hat{\lambda}^\infty_{g_1} \circ \hat{\lambda}^\infty_{g_2}$ for any two composable $g_1$,\ $g_2$, because, by continuity,
\begin{align*}
 \lVert\hat{\lambda}^\infty_{g_1g_2} - \hat{\lambda}^\infty_{g_1} \circ \hat{\lambda}^\infty_{g_2}\rVert
	= \lim{}\lVert\hat{\lambda}^i_{g_1g_2} - \hat{\lambda}^i_{g_1} \circ \hat{\lambda}^i_{g_2}\rVert
	\leq \lim r_i
	\leq \lim 2^{-i}r_0
	= 0.
\end{align*}

We shall see in section \ref{sec:proof} that the sequence of averaging iterates of any near representation converges actually \emph{much faster} than the sequence of partial sums of a geometric series. In fact, the convergence is so fast that it enforces not only the \emph{continuity} of the limiting cross-section but also its \emph{differentiability} to any order:

\begin{thm}\label{thm:12B.14.2} Let\/ $\lambda$ be a near representation of the proper Lie groupoid\/ $\varGamma \tto M$ on the vector bundle\/ $E$. For any choice of a differentiable Haar system and normalizing function, the sequence of iterated mean ratios of\/ $\lambda$ obtained by recursive application of the averaging formula\/ \eqref{eqn:12B.12.4}
\begin{equation*}
	\hat{\lambda}^0 = \lambda, \quad
	\hat{\lambda}^1 = \hat{\lambda}, \quad
	\hat{\lambda}^2 = \hat{\hat{\lambda}},~\dotsc, \quad
	\hat{\lambda}^{i+1} = \widehat{(\hat{\lambda}^i)},~\dotsc
\end{equation*}
converges pointwise as a sequence of global\/ $C^\infty$ cross-sections of the vector bundle\/ $L(s^*E,t^*E) \to \varGamma$ towards a unique ($C^\infty$-differentiable) representation\/ $\hat{\lambda}^\infty$ of\/ $\varGamma \tto M$ on\/ $E$. \end{thm}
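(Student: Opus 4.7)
\noindent The considerations immediately preceding the statement have already established that the iterates $\hat\lambda^i$ form a Cauchy sequence in the Banach space of continuous bounded cross-sections of $L(s^*E,t^*E)$, with $C^0$-limit a continuous representation $\hat\lambda^\infty$; the only remaining task is to promote this to $C^\infty$-convergence. The plan is to combine (i) super-geometric (Newton-type) decay of $r_i := r(\hat\lambda^i)$, and hence of $\|\hat\lambda^{i+1}-\hat\lambda^i\|_{C^0}$, with (ii) at most single-geometric growth of all $C^k$-seminorms of the iterates, and (iii) a Gagliardo--Nirenberg type convexity inequality, to conclude that the sequence is Cauchy in every $C^j$-seminorm on every compact set.

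For part (i), I reread Proposition \ref{npar:12B.12.7} as saying that the quantity $\tau_i := (32/9)^2 b_i^2 r_i$ satisfies a quadratic recursion $\tau_{i+1} \leq \tau_i^2$: indeed, $r_{i+1}\leq 2b_{i+1}^2 r_i^2$ combined with $b_{i+1}\leq (4/3) b_i$ gives $b_{i+1}^2 r_{i+1} \leq (32/9)^2 (b_i^2 r_i)^2$. The single-geometric bounds $r_i \leq 2^{-i}r_0$ and $b_i \leq (4/3)^i b_0$ from the preceding discussion force $\tau_i<1/2$ past some index $i_0$, whereupon $\tau_i \leq 2^{-2^{i-i_0}}$, so $r_i \leq C\kappa^{2^i}$ for some $C>0$ and $\kappa\in(0,1)$. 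In view of \eqref{eqn:12B.12.5a} and the estimate \eqref{eqn:12B.12.9b}, the $C^0$-norm of $\Delta_i := \hat\lambda^{i+1}-\hat\lambda^i$ is bounded by $b_i r_i/(1-r_i)$ and hence inherits the same doubly-exponential decay.

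For part (ii), I differentiate the defining formula \eqref{eqn:12B.12.4} under the integral sign. Since $h$ is the integration variable, a differential operator $D$ of order $k$ in the parameter $g$ acts only on the factor $\lambda_{gh}$ and on smooth bounded factors coming from the Haar density and the normalizing function; the factor $(\lambda_h)^{-1}$ is not differentiated in $g$ and is uniformly $C^0$-bounded by $b_i/(1-r_i)$ thanks to Proposition \ref{npar:12B.12.7}. Fixing a compact exhaustion $K_1\subset K_2\subset\cdots$ of $\varGamma$ and local charts, one therefore obtains $\|\hat\lambda\|_{C^k(K_n)} \leq A_k\|\lambda\|_{C^k(K_n')}$, where $K_n'$ is a slightly larger compact set accommodating the ``reach'' $\{gh : g\in K_n,\ sh\in\supp c\}$ of the averaging operator. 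Iterating with room to spare in the exhaustion yields $\|\hat\lambda^i\|_{C^k(K)} \leq A_k^i\|\lambda\|_{C^k(K')}$ and hence $\|\Delta_i\|_{C^k(K)} \leq 2A_k^i\|\lambda\|_{C^k(K')}$.

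For part (iii), the classical interpolation inequality $\|f\|_{C^j(K)} \leq C_{j,k}\|f\|_{C^0(K)}^{1-j/k}\|f\|_{C^k(K)}^{j/k}$ (a local Gagliardo--Nirenberg estimate, reduced to Euclidean patches) applied to $\Delta_i$ gives
\[
\|\Delta_i\|_{C^j(K)} \leq C'\,\kappa^{(1-j/k)\,2^i}\,A_k^{ij/k}.
\]
For any fixed $j$, choosing $k>j$ large enough makes this summable in $i$, because the super-geometric factor $\kappa^{c\cdot 2^i}$ with $c=1-j/k>0$ eventually dominates the single-geometric factor $A_k^{ij/k}$. Thus $\hat\lambda^i$ is Cauchy in $C^j$ on every compact set; the $C^0$-limit $\hat\lambda^\infty$ is automatically the $C^j$-limit and lies in $C^j$. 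Since $j$ is arbitrary, $\hat\lambda^\infty \in C^\infty$, which is what the theorem demands. The main obstacle I anticipate is making step (ii) fully precise: one has to set up the compact exhaustion and local charts so that the constants $A_k$ are uniform across iterations, and manage the Leibniz bookkeeping arising from repeated differentiation of integrals involving $c$ and the Haar density. Parts (i) and (iii) are essentially algebraic manipulations of estimates together with a standard interpolation inequality.
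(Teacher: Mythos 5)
Your parts (i) and (iii) are sound: the doubly exponential decay of $r_i$ is exactly what the paper establishes (its Lemma \ref{lem:12B.12.8} gives $r_i\leq\frac{1}{6b_0^2}\epsilon^{2^i}$ with $\epsilon=6b_0^2r_0$, and your $\tau_{i+1}\leq\tau_i^2$ bookkeeping reproduces it), and the Landau--Kolmogorov/Gagliardo--Nirenberg interpolation scheme is a legitimate alternative in principle to the paper's strategy. The gap is in part (ii), and it is not merely ``Leibniz bookkeeping.'' First, the assertion that the factor $(\lambda_h)^{-1}$ is not differentiated in $g$ is false: the domain of integration $t^{-1}(sg)$ in \eqref{eqn:12B.12.4} varies with $g$, so after any local trivialization of the submersion $t$ the integration variable becomes a function of $sg$ and a fixed fiber parameter, and $g$-derivatives do land on $(\lambda_h)^{-1}$ (this is precisely why the paper's estimate \eqref{eqn:12B.20.1} controls a Haar integral's $C^k$ seminorm by the $C^k$ seminorm of the integrand over the whole fiber product, in \emph{both} variables). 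Second, and more seriously, even granting this, the averaging operator is not linear in $\lambda$: what differentiation under the integral sign actually yields is a bound of the shape $\lVert\hat{\lambda}\rVert_{C^k}\preceq\lVert\lambda\rVert_{C^k}\lVert\lambda^{-1}\rVert_{C^k}$, and $\lVert\lambda^{-1}\rVert_{C^k}$ is in turn controlled only by higher powers of the $C^k$ data of $\lambda$ (cf.\ \eqref{eqn:12B.13.9}). So the ``constant'' $A_k$ in your claimed estimate depends on the iterate, and the recursion $u_{i+1}\preceq Cu_i^d$ with $d\geq 2$ gives a priori growth like $B^{d^i}$ rather than $A_k^i$. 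For $d>2$ the factor $A_k^{ij/k}$ in your part (iii) must be replaced by $B^{(j/k)d^i}$, which dominates $\kappa^{(1-j/k)2^i}$ for every fixed $j<k$, and the interpolation collapses.

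Closing this gap essentially forces you to prove what the paper proves instead: by a joint induction on $k$, the defect $R^{\hat{\lambda}^i}=\hat{\lambda}^i_{gh}-\hat{\lambda}^i_g\hat{\lambda}^i_h$ decays like $\epsilon^{2^i}$ in \emph{every} $C^k$ seminorm on compacta (statement S4 of section \ref{sec:proof}), whence the increments $\hat{\lambda}^{i+1}-\hat{\lambda}^i$ are summable in every $C^k$ seminorm and the sequences $\lVert\hat{\lambda}^i\rVert_{C^k}$, $\lVert(\hat{\lambda}^i)^{-1}\rVert_{C^k}$ are actually \emph{bounded} (S1--S2), not just geometrically growing. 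The engine for this is the pair of inequalities \eqref{eqn:12B.14.10}--\eqref{eqn:12B.14.15}, which exhibit the quadratic (Newton-type) gain for the defect at derivative order $k+1$ in terms of orders $k$ and $k+1$; no interpolation is then needed. Your outline, as it stands, does not contain the idea that the super-geometric decay must be propagated to all derivative orders, and without it the growth control you posit in part (ii) is unsupported.
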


\begin{rems}\label{rems:Karoubi} The principal result of \cite{delaHK} states that if one lets $\GL(E)$ denote the group of all invertible bounded linear operators on a Banach space $E$ then any “almost homomorphism” $\lambda$ of a compact group $G$ into $\GL(E)$ which is continuous for the operator norm topology on $\GL(E)$ is a “small” perturbation of an actual homomorphism of topological groups $G \to \GL(E)$, where “small” roughly means “of the same order of magnitude as $r(\lambda)$.” It is clear that the arguments we have given so far in this section do not in any way depend on the finite-dimensionality of the fibers of the vector bundle $E \to M$ which our groupoid $\varGamma \tto M$ is supposed to act on and that from them there follows at once a result for proper groupoids which is a natural generalization of the one we have just mentioned. We emphasize that our proof is not only quite different from, as well as more general than, that of de~la~Harpe and Karoubi, it is also much simpler in terms of both the ideas it involves and the amount of work it takes. While ours essentially reduces to proving Proposition \ref{npar:12B.12.7} and the two inequalities $b_1 \leq \frac{4}{3}b_0$ and $r_1 \leq \frac{1}{2}r_0$, theirs takes up several pages and involves a preliminary study of the properties of idempotents in a Banach algebra, holomorphic functional calculus, and a series of lemmas about the regular representation of a compact group with Banach module coefficients.

A somewhat closer analogy can be drawn between our averaging formula \eqref{eqn:12B.12.4} and the “center of mass construction” of Grove, Karcher and Ruh \cite{GKR}, and the subsequent adaptation thereof by Zung \cite{Zung}. Similar considerations apply to our use of \emph{recursive} averaging, an idea which is not present in \cite{delaHK}. There are also noticeable differences, though. While our formula may be seen as a “global” counterpart of these constructions, at the same time it contains simplifications which are only possible in the “linear algebraic” setting of pseudo-representations and which enhance its performance in computations. Its ultimate justification, in any case, lies in the cohomological considerations of subsection \ref{sub:proof1}, rather than in the constructions of the cited references. \end{rems}

\subsubsection*{The case of connections}

We proceed to discuss an important application of the preceding theory to the situation where $E = \T M$ and $\lambda = \lambda^H$ is the effect of a \emph{nondegenerate} connection $H$ on $\varGamma \tto M$. (Cf.~Definition \ref{defn:12B.11.2}.) We shall make use of the basic identity \eqref{eqn:12B.11.7*} and of the notations of subsection \ref{sub:proof1} typically without express notice.

We begin by spelling out a couple of formulas which, in the context of connections, may be looked upon as the analogs of \eqref{eqn:12B.12.5a} and \eqref{eqn:12B.12.5b}. First, if we define $\varDelta^H$ as in \eqref{eqn:14A.14.2}, then for every $g$ in $\varGamma$%
\begin{subequations}
\begin{equation}
	\omega_g \circ (\eta^{\hat{H}}_g - \eta^H_g) = \integral_{th=sg} c(sh)\varDelta^H(g,h)\lambda_g \der h;
\label{eqn:12B.15.5a*}
\end{equation}
this is merely a rewrite of equation \eqref{eqn:12B.15.5a}, whose proof in subsection \ref{sub:proof1} did obviously not depend on $\lambda = \lambda^H$ being a representation. Second, for every composable pair of arrows $(g,h) \in \varGamma_2$,
\begin{align}
 -R^{\hat{H}}(g,h) &
	= \integral_{tk=sh} c(sk)\varDelta^H(g,hk)\lambda_g \circ [\lambda_{hk}\lambda_k^{-1} - \lambda_h] \der k \notag\\* &\justify
	  - \iintegral_{\substack{tk=sh\\ tk'=sh}} c(sk)c(sk')\varDelta^H(g,hk)\lambda_g \circ [\lambda_{hk'}\lambda_{k'}^{-1} - \lambda_h] \der k \der k'.
\label{eqn:12B.15.5b}
\end{align}
\end{subequations}
The latter formula can be established as follows. If we let $X$ correspond as in \eqref{eqn:14A.14.3} to $H' = \hat{H}$, that is to say, in view of \eqref{eqn:12B.15.5a}, if $X(g) = \integral_{th=sg} c(sh)\varDelta^H(g,h) \der h$ for all $g$, then, by successively using the invariance of the Haar system, equation \eqref{eqn:cocycle*}, and the normalizing function property, we obtain
\begin{align*} &
 \alpha^H_g \circ X(h)\lambda_h - X(gh)\lambda_{gh} + X(g)\lambda_g \circ \lambda_h \\* &\quad
	= \integral_{tk=sh} c(sk)[\alpha^H_g \circ \varDelta^H(h,k)\lambda_h - \varDelta^H(gh,k)\lambda_{gh}] \der k \\* &\quad\justify
	  + \integral_{tk'=sg} c(sk')\varDelta^H(g,k')\lambda_g \circ \lambda_h \der k' \\ &\quad
	= \integral_{tk=sh} c(sk)[\alpha^H_g\varDelta^H(h,k)\lambda_h - \varDelta^H(gh,k)\lambda_{gh} + \varDelta^H(g,hk)\lambda_g\lambda_h] \der k \\ &\quad
	= \integral_{tk=sh} c(sk)[-R^H(g,h) - \varDelta^H(g,hk)\lambda_g\lambda_{hk}\lambda_k^{-1} + \varDelta^H(g,hk)\lambda_g\lambda_h] \der k \\ &\quad
	= -R^H(g,h) - \integral_{tk=sh} c(sk)\varDelta^H(g,hk)\lambda_g \circ [\lambda_{hk}\lambda_k^{-1} - \lambda_h] \der k.
\end{align*}
Substituting into equation \eqref{eqn:defect} and expanding the first term $X(g)\lambda_g \circ (\hat{\lambda}_h - \lambda_h)$ in the right-hand side of that equation by means of \eqref{eqn:12B.12.5a}, we arrive at \eqref{eqn:12B.15.5b}.

Let us suppose now that $\lambda = \lambda^H$ is a \emph{near representation.} In such case we obtain a whole sequence $H$,~$\hat{H}$,~$\hat{\hat{H}}$,~$\dotsc$ of nondegenerate connections $\hat{H}^i$ whose effects $\lambda^{\hat{H}^i}$ coincide with the averaging iterates $\hat{\lambda}^i$ by setting $\hat{H}^0 = H$ and, recursively, $\hat{H}^{i+1} = \widehat{(\hat{H}^i)}$ for all $i \geq 0$. We contend that the horizontal lifts $\eta^{\hat{H}^i}$ converge pointwise over $\varGamma$ as a sequence of cross-sections of the vector bundle $L(s^*\T M,\T\varGamma)$. To see it, let us fix an arbitrary vector bundle metric on $\Lie\varGamma$, as well as a metric on $E = \T M$ of the kind specified in Definition \ref{defn:12B.14.1}. Note that the inequalities \eqref{eqn:12B.12.6} are also valid for $\mu \in L(\T_yM,\Lie_z\varGamma)$ provided we interpret $\lVert\blank\rVert_{y,z}$ as the operator norm on $L(\T_yM,\Lie_z\varGamma)$. Our contention is evidently tantamount to saying that, for any given $g$ in $\varGamma$, the sequence (or series) \[%
	\omega_g \circ (\eta^{\hat{H}^{i+1}}_g - \eta^H_g) = \omega_g \circ (\eta^{\hat{H}^{i+1}}_g - \eta^{\hat{H}^i}_g) + \dotsb + \omega_g \circ (\eta^{\hat{H}}_g - \eta^H_g)
\] is Cauchy within the finite-dimensional normed vector space $L(\T_{sg}M,\Lie_{tg}\varGamma)$. Now, for all $h$ in the compact set $K = s^{-1}(\supp c) \cap t^{-1}(sg)$, in the notations introduced after Definition \ref{defn:12B.14.1}, we have
\begin{alignat*}{2} &
 \lVert\varDelta^{\hat{H}^{i+1}}(g,h)\hat{\lambda}^{i+1}_g\rVert
	\leq \lVert R^{\hat{H}^{i+1}}(g,h)\rVert\:\lVert(\hat{\lambda}^{i+1}_h)^{-1}\rVert &\quad &
		\text{by \eqref{eqn:14A.14.2} and \eqref{eqn:12B.12.6}}
\\ &\quad
	\leq 2\sup_{k\in h^{-1}K}{}\lVert\varDelta^{\hat{H}^i}(g,hk)\hat{\lambda}^i_g\rVert \cdot \frac{b_ir_i}{1 - r_i} \cdot \frac{b_{i+1}}{1 - r_{i+1}} &\quad &
		\text{by \eqref{eqn:12B.15.5b}, \eqref{eqn:12B.12.9b}, and \eqref{eqn:12B.12.9a}}
\\ &\quad
	\leq 2\sup_{hk\in K}{}\lVert\varDelta^{\hat{H}^i}(g,hk)\hat{\lambda}^i_g\rVert \cdot \frac{b_ir_i}{1 - r_i} \cdot \frac{4}{3}\frac{b_i}{1 - r_i} &\quad &
		\text{by \eqref{eqn:12B.12.10a} and \eqref{eqn:12B.14.1} for $\hat{\lambda}^{i+1}$}
\\ &\quad
	\leq \tfrac{2}{3}\sup_{h\in K}{}\lVert\varDelta^{\hat{H}^i}(g,h)\hat{\lambda}^i_g\rVert &\quad &
		\text{by \eqref{eqn:12B.14.1} for $\hat{\lambda}^i$}
\end{alignat*}
and therefore, by \eqref{eqn:12B.15.5a*} and induction on $i \geq 0$,
\begin{equation*}
 \lVert\omega_g \circ (\eta^{\hat{H}^{i+1}}_g - \eta^{\hat{H}^i}_g)\rVert
	\leq \sup_{h\in K}{}\lVert\varDelta^{\hat{H}^i}(g,h)\hat{\lambda}^i_g\rVert
	\leq (\tfrac{2}{3})^i\sup_{h\in K}{}\lVert\varDelta^H(g,h)\lambda^H_g\rVert.
\end{equation*}
It follows that our sequence converges at least as fast as the partial sums of a geometric series. In fact, our argument shows that the convergence is uniform on every compact subset of $\varGamma$. The limiting cross-section, let us call it $\eta^{\hat{H}^\infty}$, is therefore continuous. It is obviously a splitting of $\der s$. It is also unital, because so is every $\hat{H}^i$, $i \geq 1$. We leave it as an exercise for the reader to check that $\eta^{\hat{H}^\infty}$ further satisfies the multiplicativity equations \eqref{prop:12B.9.4}.

\begin{thm}\label{thm:12B.15.1} Let\/ $H$ be a connection on the proper Lie groupoid\/ $\varGamma \tto M$. Suppose that the effect\/ $\lambda^H$ of\/ $H$ is a near representation. Then, for any choice of a differentiable Haar system and normalizing function, the sequence of iterated mean ratios of\/ $H$
\begin{equation*}
	\hat{H}^0 = H, \quad
	\hat{H}^1 = \hat{H}, \quad
	\hat{H}^2 = \hat{\hat{H}},~\dotsc, \quad
	\hat{H}^{i+1} = \widehat{(\hat{H}^i)},~\dotsc
\end{equation*}
(cf.~\textup{Definition \ref{defn:12B.11.2}}) is pointwise convergent, when viewed via the associated horizontal lifts\/ $\eta^{\hat{H}^i}$ as a sequence of global\/ $C^\infty$ cross-sections of the vector bundle\/ $L(s^*\T M,\T\varGamma)$, towards a unique multiplicative ($C^\infty$-differentiable) connection\/ $\hat{H}^\infty$ on\/ $\varGamma \tto M$. \end{thm}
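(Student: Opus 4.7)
The plan is to assemble Theorem \ref{thm:12B.15.1} from the pieces already laid down in this section, with the question of $C^\infty$-regularity of the limit deferred to section \ref{sec:proof}.

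First, by the identity \eqref{eqn:12B.11.7*} the effects $\lambda^{\hat{H}^i}$ coincide with the averaging iterates $\hat{\lambda}^i$ of the near representation $\lambda^H$, so Theorem \ref{thm:12B.14.2} already gives pointwise convergence $\lambda^{\hat{H}^i} \to \hat{\lambda}^\infty$ to a ($C^\infty$) representation of $\varGamma \tto M$ on $\T M$. The Cauchy estimate
\[
\lVert\omega_g \circ (\eta^{\hat{H}^{i+1}}_g - \eta^{\hat{H}^i}_g)\rVert \leq (\tfrac{2}{3})^i\sup_{h \in K}\lVert\varDelta^H(g,h)\lambda^H_g\rVert
\]
(with $K = s^{-1}(\supp c) \cap t^{-1}(sg)$) worked out immediately before the theorem statement then delivers pointwise, indeed locally uniform, convergence of the horizontal lifts $\eta^{\hat{H}^i}$ to a continuous cross-section $\eta^{\hat{H}^\infty}$ of $L(s^*\T M,\T\varGamma)$. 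By construction $\eta^{\hat{H}^\infty}$ splits $\der s$, and by \eqref{lem:12B.11.3} it is unital (since every $\hat{H}^i$ with $i \geq 1$ is).

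Multiplicativity of $\hat{H}^\infty$ will follow by passing to the pointwise limit in the definition \eqref{eqn:basic**} of the basic curvature: combining the identity
\[
\omega_{g_1g_2}^{-1} R^{\hat{H}^i}(g_1,g_2) v = \eta^{\hat{H}^i}_{g_1}\lambda^{\hat{H}^i}_{g_2}v \cdot \eta^{\hat{H}^i}_{g_2}v - \eta^{\hat{H}^i}_{g_1g_2}v
\]
with the geometric decay of $\lVert R^{\hat{H}^i}\rVert$ implicit in the computations preceding the theorem yields \eqref{prop:12B.9.4} in the limit; the unitality of $\eta^{\hat{H}^\infty}$ then secures genuine multiplicativity via the observation following that equation in section \ref{sec:preliminaries}.

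The real obstacle, and the subject of section \ref{sec:proof}, is upgrading the continuous limit $\eta^{\hat{H}^\infty}$ to a $C^\infty$-differentiable one. The plan is to exploit the \emph{quadratic} nature of the convergence $r_{i+1} \lesssim r_i^2$ built into Proposition \ref{npar:12B.12.7}, in the spirit of Nash--Moser iteration. Concretely, for each compact $K \subset \varGamma$ and each integer $k \geq 0$, one would establish $C^k$-analogs of the estimates \eqref{eqn:12B.12.10} and \eqref{eqn:12B.15.5a}--\eqref{eqn:12B.15.5b} by differentiating the Haar integral \eqref{eqn:12B.12.4} under the integral sign (legitimate by the differentiability axiom for the Haar system) and by controlling $C^k$-norms of the inverses $(\lambda^{\hat{H}^i}_g)^{-1}$ via a $C^k$-Neumann-series analog of Lemma \ref{lem:12B.12.3}. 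Even if the $C^k$-norm of $\hat{\lambda}^i$ grows with $i$ under the recursion, so long as the growth is at worst polynomial (or singly exponential) in $i$ the super-geometric quadratic decay of $r_i$ overwhelms it and delivers $C^k$-Cauchy-ness of both sequences on every compact subset of $\varGamma$. I expect this ``loss-of-derivatives-compensated-by-quadratic-convergence'' scheme to be the technical heart of the proof.
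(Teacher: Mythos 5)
Your proposal is correct and follows essentially the same route as the paper: the continuity, splitting, and unitality of the limit, together with the multiplicativity argument you sketch (which is precisely the ``exercise'' the text leaves to the reader, settled by the pointwise decay of $R^{\hat{H}^i}$ extracted from \eqref{eqn:12B.15.5b}), are exactly the content of the discussion preceding the theorem, and your ``quadratic convergence beats loss of derivatives'' scheme is exactly what section \ref{sec:proof} executes via the inequalities \eqref{eqn:12B.14.10}--\eqref{eqn:12B.14.15} and the induction on $k$ in statements S1--S4. The only cosmetic difference is that the paper's induction actually establishes \emph{boundedness} of the $C^k\overline{\varOmega}$~seminorms of $\hat{\lambda}^i$ and $(\hat{\lambda}^i)^{-1}$ (rather than merely controlled growth, which, as you note, would also suffice against the decay $\lVert R^{\hat{\lambda}^i}\rVert_{C^k\overline{\varOmega_2}} \preceq \epsilon^{2^i}$).
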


We shall refer to Theorems \ref{thm:12B.14.2} and \ref{thm:12B.15.1} collectively as the ``fast convergence theorem.'' Their proofs will be completed in section \ref{sec:proof}. The remainder of the current section will be devoted to explaining how the principal results of section \ref{sec:results} can be deduced from Theorem \ref{thm:12B.15.1}.

\subsection{Fast convergence implies Theorem \ref{thm:main}}\label{sub:deducing2}

Let us go back to the situation described in the statement of Theorem \ref{thm:main}. The idea of the proof is simple: there must be some open neighborhood $V$ of $S$ such that the effect of $H \mathbin| V$ is a near representation of $\varGamma \mathbin| V \tto V$; once this is proven, Theorem \ref{thm:main} will be a direct consequence of Theorem \ref{thm:12B.15.1} on account of the remark that it is not restrictive to assume $S = \overline{S}$ to be \emph{closed,} in addition to being invariant (the closure $\overline{S}$ of any invariant set $S$ is itself invariant, and if $\lambda^H$ is a representation over $S$, then it is also a representation over $\overline{S}$, by invariance and continuity); it will suffice to pick our normalizing function $c$ on $V$ so that $\supp c \subset B$, and then define $\varPhi$ to be the limit of the iterated mean ratios of $H \mathbin| V$; since obviously apart from multiplicativity the properties enounced in the conclusions of Theorem \ref{thm:main} are valid for every iterated mean ratio of $H \mathbin| V$, as well as stable under passage to the pointwise limit, they will also be valid for $\varPhi$.

In order to simplify the notations, instead of $\lambda^H$ we may as well consider an arbitrary pseudo-representation $\lambda$ of $\varGamma \tto M$ which is a representation over $S$ in the sense that $\lambda_{1x} = \id$ for all $x$ in $S$ and $\lambda_{g_1g_2} = \lambda_{g_1}\lambda_{g_2}$ for any two composable $g_1$,\ $g_2$ in $\varGamma \mathbin| S$; we can do this since for any $V$ the restriction of $\lambda^H$ to $\varGamma \mathbin| V \tto V$ is equal to $\lambda^{H|V}$. Let $E$ be the vector bundle on which $\lambda$ operates. Lemma \ref{lem:metric} gives us a vector bundle metric on $E$ such that $\lambda_g$ is an isometry of $E_{sg}$ onto $E_{tg}$ for every $g$ in $\varGamma \mathbin| S$. Relative to any such metric, let us form the following three open sets.
\begin{gather*}
	\varOmega_0 = \{x \in M: \lVert\id - \lambda_{1x}\rVert < 1/36\} \\
	\varOmega_1 = \{g \in \varGamma: \lVert\lambda_g\rVert < \sqrt{2}\} \\
	\varOmega_2 = \{(g_1,g_2) \in \varGamma \ftimes{s}{t} \varGamma: \lVert\lambda_{g_1g_2} - \lambda_{g_1}\lambda_{g_2}\rVert < 1/36\}
\end{gather*}
Clearly, since $\lambda$ is a representation over $S$, $\varOmega_0$ contains $S$, and $\varOmega_2$ contains $\varGamma \mathbin| S \ftimes{s}{t} \varGamma \mathbin| S$. Also, by the invariance of our metric under the restriction of $\lambda$ over $S$, $\varOmega_1$ contains $\varGamma \mathbin| S$. So, if we can find an open neighborhood $V$ of $S$ such that
\begin{equation}
	\text{$V \subset \varOmega_0$,\quad $\varGamma \mathbin| V \subset \varOmega_1$,\quad and\quad $\varGamma \mathbin| V \ftimes{s}{t} \varGamma \mathbin| V \subset \varOmega_2$,}
\label{eqn:deducing2}
\end{equation}
then $\lambda$ will be a near representation over $V$ because
\begin{equation*}
 \sup_{x\in V}{}\lVert\id - \lambda_{1x}\rVert
		+ \sup_{\substack{g_1,g_2\in\varGamma|V\\ sg_1=tg_2}}{}\lVert\lambda_{g_1g_2} - \lambda_{g_1}\lambda_{g_2}\rVert
	\leq 1/18
	\leq \tfrac{1}{9}(\sup_{g\in\varGamma|V}{}\lVert\lambda_g\rVert)^{-2}.
\end{equation*}

We begin by showing that, for every open subset $U$ of $M$ which is relatively compact in the sense that its closure $\overline{U}$ is compact, each point $x$ in $S \cap U$ admits an open neighborhood $V$ within $U$ which satisfies \eqref{eqn:deducing2} and which is relatively invariant in the sense that $V = U \cap \varGamma V$. In order to do so, let us pick any monotone sequence $B_0 \supset B_1 \supset B_2 \supset \dotsb$ of relatively compact open neighborhoods $B_n$ of $x$ within $U$ such that $\bigcap B_n = \{x\}$. Since $\overline{B}_n$ is compact, so is
\begin{equation*}
	\overline{U} \cap \varGamma\overline{B}_n = t\bigl(s^{-1}(\overline{B}_n) \cap t^{-1}(\overline{U})\bigr).
\end{equation*}
Let us put $A_n = \varGamma \mathbin|(\overline{U} \cap \varGamma \overline{B}_n)$. The $A_n$ form a monotone decreasing sequence of compact sets in $\varGamma$. It is not hard to see that $\bigcap A_n$ is contained in $\varGamma \mathbin| S$ and hence that $\bigcap(A_n \ftimes{s}{t} A_n)$ is contained in $\varGamma \mathbin| S \ftimes{s}{t} \varGamma \mathbin| S$. Thus for $n$ large we must have $A_n \subset \varGamma \mathbin| \varOmega_0$, $A_n \subset \varOmega_1$, and $A_n \ftimes{s}{t} A_n \subset \varOmega_2$, so that if we take $V = U \cap \varGamma B_n$ we are done.

We observe next that for every open subset $U$ of $M$ the union $\bigcup V$ of all relatively invariant open subsets $V = U \cap \varGamma V$ of $U$ which satisfy \eqref{eqn:deducing2} is itself one such $V$. Indeed, by the relative invariance of each $V$, we have $\varGamma \mathbin| \bigcup V = \bigcup \varGamma \mathbin| V$ and $\varGamma \mathbin| \bigcup V \ftimes{s}{t} \varGamma \mathbin| \bigcup V = \bigcup(\varGamma \mathbin| V \ftimes{s}{t} \varGamma \mathbin| V)$. If $U$ is also relatively compact, then by what we have shown in the previous paragraph $\bigcup V$ must contain $S \cap U$.

Let us now fix an arbitrary monotone sequence $U_0 \subset U_1 \subset U_2 \subset \dotsb$ of relatively compact open subsets $U_n$ of $M$ such that $\bigcup U_n = M$. For each $n$ let $V_n$ be the largest relatively invariant open neighborhood $V = U_n \cap \varGamma V$ of $S \cap U_n$ within $U_n$ that satisfies \eqref{eqn:deducing2}. Then $V = \bigcup V_n$ must satisfy \eqref{eqn:deducing2} as well because for $m < n$ we have
\begin{align*}
 V_m \cap \varGamma V_n
	= (V_m \cap U_m) \cap \varGamma V_n
	\subset V_m \cap (U_n \cap \varGamma V_n)
	= V_m \cap V_n
\end{align*}
and this implies at once that $\varGamma \mathbin| V = \bigcup \varGamma \mathbin| V_n$ and that $\varGamma \mathbin| V \ftimes{s}{t} \varGamma \mathbin| V = \bigcup(\varGamma \mathbin| V_n \ftimes{s}{t} \varGamma \mathbin| V_n)$. Of course $V = \bigcup V_n$ is an open neighborhood of $S$. The proof is finished.

\subsection{Deducing Theorem \ref{prop:14A.5.2} from Theorem \ref{thm:main}}\label{sub:deducing1}

Let the situation be now as in the statement of Theorem \ref{prop:14A.5.2}. It is clear that in order to prove our theorem we may always replace $M$ with an open neighborhood $V'$ of $C \cup Z$ and $V$ with an open neighborhood of $C$ contained in $V \cap V'$. We are going to show that, at the expense of doing so, we can always reduce the proof to the situation where there exists a connection $H$ on $\varGamma \tto M$ such that $H \mathbin| V = \varPhi$, $H \mathbin| Z = \varPsi$, and $\lambda^H$ is a representation over $Z$ and hence over the invariant set $C \cup Z$. Once this is done, we shall find ourselves in the condition of applying Theorem \ref{thm:main} to the connection $H$ and to the invariant set $S = C \cup Z$ over which $\lambda^H$ is a representation: over some open neighborhood of $C \cup Z$, which we may suppose to be all of $M$, there will exist, for any choice of an open subset $B$ of $M$ such that $\varGamma B = M$, a multiplicative connection $\varPhi'$ with the properties enunciated for $\varPhi$ in the statement of Theorem \ref{thm:main}. Now, by Lemma \ref{lem:domain}, we know there is some open neighborhood $U$ of $C$ such that $\overline{U} \subset V$ and such that $B \cap \varGamma U \subset U$ for some such $B$. We contend that every $\varPhi'$ associated with this particular choice of $B$ will agree with $\varPhi$ over $V \cap \varGamma U \supset C$ and will induce $\varPsi$ along $Z$. The latter property is clear since $Z$ is an invariant submanifold of $M$ along which $H \mathbin| Z = \varPsi$ is multiplicative and so $\varPhi' \mathbin| Z = H \mathbin| Z = \varPsi$. As to the former, we observe that $V \cap \varGamma U$ is an open subset of $M$ (hence a locally invariant submanifold) such that
\begin{equation*}
 B \cap \varGamma(V \cap \varGamma U)
	\subset B \cap \varGamma(\varGamma U)
	= B \cap \varGamma U
	\subset U
	\subset V \cap \varGamma U
\end{equation*}
along which $H$ induces a multiplicative connection, namely, $\varPhi \mathbin| V \cap \varGamma U$. This proves our contention and, therefore, establishes Theorem \ref{prop:14A.5.2} in the special case when there exists a connection $H$ satisfying our requirements. The rest of this subsection will be devoted to constructing one such connection. The argument, which we shall organize into three steps, will be close in spirit to the proof of Theorem \ref{thm:regular} though a bit more involved.

\paragraph*{Step 1.} To begin with, we explain how to reduce the proof to the situation where on $\varGamma \tto M$ we have a connection $H$ such that both $H \mathbin| V = \varPhi$ and $H \mathbin| Z = \varPsi$. (At this stage, $\lambda^H$ need not yet be a representation over $Z$; in the final step, we shall indicate how to ``correct'' $H$ so as to turn it into a new connection having all of the required properties.)

We notice first of all that there is no loss of generality in assuming the union $C \cup Z$ to be \emph{closed} in $M$. This condition can always be achieved by first replacing $M$ with a suitable open neighborhood of $V \cup Z$ and then making $C$ relatively larger inside that neighborhood, as follows. Since $Z$ is a submanifold of $M$, it is locally closed, hence it admits an open neighborhood $W$ with $W \cap \overline{Z} \subset Z$. The difference between the relative closure of $Z$ in $V \cup W$ and $Z$ itself is contained in $V$, hence at the expense of substituting $M$ with $V \cup W$ we may assume that the two invariant closed sets $\overline{Z} \smallsetminus Z = \overline{Z} \smallsetminus W$ and $C' = C \cup (\overline{Z} \smallsetminus Z)$ are contained in $V$. The union $C' \cup Z = C \cup \overline{Z}$ is closed now.

We shall obtain $H$ by means of a simple extension principle which we shall use again in the course of the proof on two more occasions. Let $E$ be a smooth vector bundle over $\varGamma$. Let $\vartheta$ be a $C^\infty$-differentiable partial cross-section of $E$ defined over the open neighborhood $\varGamma \mathbin| V$ of $s^{-1}(C)$, and let $\zeta$ be a similar cross-section defined over the submanifold $\varGamma \mathbin| Z = s^{-1}(Z)$ of $\varGamma$ and equal to $\vartheta$ on $\varGamma \mathbin| V \cap \varGamma \mathbin| Z = \varGamma \mathbin| V \cap Z$. \em Then, for any choice of an open neighborhood\/ $U$ of\/ $C$ such that\/ $\overline{U} \subset V$, there exists a global\/ $C^\infty$ cross-section of\/ $E$ which agrees with\/ $\vartheta$ on\/ $\varGamma \mathbin| U$ and with\/ $\zeta$ on\/ $\varGamma \mathbin| Z$. \em Proof: we can cover $\varGamma \mathbin| Z$ with open subsets $\varOmega$ of $\varGamma$ such that $\zeta$ extends to $\varOmega$ in a $C^\infty$ fashion; since $C \cup Z$ is closed, $s^{-1}(C) \cup s^{-1}(Z) = s^{-1}(C \cup Z)$ is a closed subset of $\varGamma$, and hence, a fortiori, so is $\varGamma \mathbin| \overline{U} \cup \varGamma \mathbin| Z$; we may then use a partition of unity on $\varGamma$ subordinated to the open cover given by $\varGamma \mathbin| V$, the $\varOmega \smallsetminus \varGamma \mathbin| \overline{U}$, and $\varGamma \smallsetminus (\varGamma \mathbin| \overline{U} \cup \varGamma \mathbin| Z)$.

Now, we know that at the cost of shrinking $V$ around $C$ a bit we can find a connection $H$ on $\varGamma \tto M$ for which $H \mathbin| V = \varPhi$; cf.~the comments following the statement of Problem \ref{prob:ext}. Relative to the choice of a Riemannian metric on $M$, we have the orthogonal direct sum decomposition
\begin{equation}
	\T M \mathbin| Z = \T Z \oplus \T^\bot Z.
\label{eqn:direct}
\end{equation}
For each $g \in \varGamma \mathbin| Z$, let $\zeta_g$ be the linear map of $\T_{sg}M$ into $\ker\T_gs$ that equals $\eta^\varPsi_g - \eta^H_g$ on $\T_{sg}Z$ and $0$ on $\T^\bot_{sg}Z$. We may view $g \mapsto \zeta_g$ as a partial cross-section of $E = L(s^*\T M,\ker\der s)$ defined over $\varGamma \mathbin| Z$ which vanishes over $\varGamma \mathbin| V \cap Z$. For any $U$ as in the previous paragraph, we may add any global cross-section of $E$ extending $g \mapsto \zeta_g$ and vanishing over $\varGamma \mathbin| U$ to $\eta^H$ so as to produce a new $H$ now satisfying both $H \mathbin| U = \varPhi$ and $H \mathbin| Z = \varPsi$.

\paragraph*{Step 2.} Our provisional construction of $H$ in the previous step involves the choice of a Riemannian metric on $M$. In order to be able later to ``correct'' $H$ so that it complies with all of our requirements, however, we cannot just pick any metric: it will be necessary that we pick one which is both invariant under $\lambda^\varPhi$ over (a possibly smaller) $V$, and invariant under $\lambda^\varPsi$ along $Z$ in the sense that $\lambda^\varPsi_g$ is an orthogonal linear transformation of $\T_{sg}Z$ into $\T_{tg}Z$ for all $g$ in $\varGamma \mathbin| Z$. We now explain how to construct such metrics.

Of course, there is some Riemannian metric on $V$ which is invariant under $\lambda^\varPhi$, say, by Lemma \ref{lem:metric}. At the cost of shrinking $V$ around $C$ a bit, we may assume this is the restriction of a global metric on $M$. Let us consider the direct sum decomposition \eqref{eqn:direct} corresponding to this metric. Since $Z$ is invariant, for every $g$ in $\varGamma \mathbin| Z$ the linear map $\lambda^H_g: \T_{sg}M \to \T_{tg}M$ carries $\T_{sg}Z$ into $\T_{tg}Z$. It need not carry $\T^\bot_{sg}Z$ into $\T^\bot_{tg}Z$. However, if we set
\begin{equation*}
	\nu^Z_g = (\T^\bot_{sg}Z \to \T_{sg}M \xto{\lambda^H_g} \T_{tg}M \to \T^\bot_{tg}Z),
\end{equation*}
we get a representation $\nu^Z$ of $\varGamma \mathbin| Z \tto Z$ on $\T^\bot Z$; the argument is the same that we gave after the proof of Lemma \ref{lem:metric} in subsection \ref{sub:proof2} (there the role of $Z$ was played by the single orbits). For all $g$ in $\varGamma \mathbin| V \cap Z$, by the orthogonality of $\lambda^\varPhi$ with respect to our metric, we have
\begin{equation}
 \lambda^\varPhi_g =%
\begin{pmatrix}
	\lambda^\varPsi_g &    0
\\	        0         & \nu^Z_g
\end{pmatrix},
\label{eqn:matrix}
\end{equation}
where the matrix on the right makes sense for every $g$ in $\varGamma \mathbin| Z$ and defines a \emph{representation} of $\varGamma \mathbin| Z \tto Z$ on $\T M \mathbin| Z$ which carries the subbundle $\T Z$ of $\T M \mathbin| Z$ into itself and, on it, agrees with $\lambda^\varPsi$. We may view the latter representation as a partial section of $L(s^*\T M,t^*\T M)$ defined over $\varGamma \mathbin| Z$ that agrees with $\lambda^\varPhi$ on $\varGamma \mathbin| V \cap Z$. Let us fix an open neighborhood $U$ of $C$ of the kind specified in Lemma \ref{lem:domain}. By the extension principle discussed in Step 1, we can find a global section $\lambda$ of $L(s^*\T M,t^*\T M)$ (a pseudo-representation of $\varGamma \tto M$ on $\T M$) which over $U$ agrees with $\lambda^\varPhi$ and over $Z$ agrees with the matrix representation \eqref{eqn:matrix}. Now $\lambda$ and $S = U \cup Z$ satisfy the hypotheses of Lemma \ref{lem:metric}, so the existence of a metric with the desired properties follows from that lemma.

\paragraph*{Step 3.} We explain now how to “correct” $H$ so as to make its effect into a representation over $Z$. In order to simplify the presentation, we shall only do this under the extra hypothesis that $Z$ is \emph{homogeneous} i.e.~consists of orbits all of the same dimension; since the homogeneous case is the only one that is relevant for the applications of Theorem \ref{prop:14A.5.2} discussed in section \ref{sec:results}, there is no reason for delving into the general case here; the latter requires little additional work, and can safely be left to the reader.

Let $L$ be the \emph{longitudinal bundle} of the \emph{regular} groupoid $\varGamma \mathbin| Z \tto Z$; cf.~subsection \ref{sub:regular}. It is a subbundle of $\T Z$ and hence $\T M \mathbin| Z$ of class $C^\infty$. As in subsection \ref{sub:proof2}, let us pick an arbitrary splitting $\xi: t^*L \to \ker\der s$ of the epimorphism of vector bundles $\der t: \ker\der s \onto t^*L$; source and target, here, refer to $\varGamma \mathbin| Z \tto Z$. For each $g$ in $\varGamma \mathbin| Z$, $\xi$ provides a linear map $\xi_g: L_{tg} \to \ker\T_gs$ satisfying $\T_gt \circ \xi_g = \id$. Let us endow $M$ with a Riemannian metric of the sort specified in the previous step. We have a corresponding orthogonal direct sum decomposition
\begin{equation}
	\T M \mathbin| Z = L \oplus L^\bot.
\label{eqn:direct*}
\end{equation}
For every $g$ in $\varGamma \mathbin| Z$, the map $\lambda^H_g$ carries $L_{sg}$ into $L_{tg}$. Thus, letting $\pr$ denote the orthogonal projection from $\T M \mathbin| Z$ onto $L$ determined by our metric, it makes sense to set
\begin{equation*}
	\zeta_g = \xi_g \circ (\lambda^H_g \circ \pr_{sg} - \pr_{tg} \circ \lambda^H_g): \T_{sg}M \longto \ker\T_gs.
\end{equation*}
We regard $g \mapsto \zeta_g$ as a partial cross-section of the vector bundle $L(s^*\T M,\ker\der s)$ with domain of definition $\varGamma \mathbin| Z$. By the invariance of the chosen metric under the effect of $\varPhi = H \mathbin| V$, the linear map $\zeta_g$ must be zero for all $g$ in $\varGamma \mathbin| V \cap Z$. By the extension principle discussed in Step 1, at the expense of shrinking $V$ we can extend $g \mapsto \zeta_g$ to a global cross-section vanishing on $\varGamma \mathbin| V$. We contend that the result of adding this global cross-section to $\eta^H$ is the horizontal lift associated with a new connection, say, $H'$ enjoying all of the properties we want. We obviously have $H' \mathbin| V = H \mathbin| V = \varPhi$. For each $g$ in $\varGamma \mathbin| Z$, the linear map $\lambda^H_g \mathbin| \T_{sg}Z = \lambda^\varPsi_g$ carries $\T_{sg}Z \cap L^\bot_{sg}$ into $\T_{tg}Z \cap L^\bot_{tg}$ (again by our choice of metric). Thus $\T_{sg}Z$ lies in $\ker\zeta_g$, and $H' \mathbin| Z = H \mathbin| Z = \varPsi$. Finally, the matrix representation of the linear map $\lambda^{H'}_g$, $g \in \varGamma \mathbin| Z$, relative to the direct sum decomposition \eqref{eqn:direct*} is readily computed to be
\begin{equation*}
 \lambda^{H'}_g =%
\begin{pmatrix}
	\lambda^\varPsi_g \mathbin| L_{sg} &   0
\\	                0                  & \nu_g
\end{pmatrix},
\end{equation*}
where $\lambda^\varPsi \mathbin| L$ is the longitudinal effect of the \emph{multiplicative} connection $\varPsi$ (hence a \emph{representation}) and $\nu_g$ is defined, as in subsection \ref{sub:proof2}/\eqref{eqn:nu/a}, by setting
\begin{equation*}
	\nu_g = (L^\bot_{sg} \to \T_{sg}M \xto{\lambda^H_g} \T_{tg}M \to L^\bot_{tg}).
\end{equation*}
Now, as in subsection \ref{sub:proof2}, $g \mapsto \nu_g$ can be shown to be a \emph{representation} of $\varGamma \mathbin| Z \tto Z$ on $L^\bot$. This proves our contention and, hence, our theorem.

\section{Proof of the fast convergence theorem}\label{sec:proof}

At this point the only task we have not yet fully carried out is our demonstration of Theorems \ref{thm:12B.14.2} and \ref{thm:12B.15.1}. In order to conclude the proof of either theorem, there remains to be shown that the pointwise limit of the sequence of mean ratios is an infinitely differentiable cross-section of the relevant vector bundle. All of the other claims about the limit, including its existence and continuity, have already been proven. The verification of the infinite differentiability will involve the notion of \emph{$C^\infty$-topology} on the space of infinitely differentiable cross-sections; for the reader's convenience, and also in order to fix the notations, we shall now review the basic definitions and facts underlying this notion that enter into our argument.

Let $P$ be a smooth manifold of dimension, say, $n$. Let $E$ be a smooth vector bundle over $P$. Let $V \subset P$ be an open set which is relatively compact in the sense that its closure $\overline{V}$ is compact. Let $\{\varphi_i\}$ be a finite collection of local coordinate charts $\varphi_i: W_i \simto \R^n$ of class $C^\infty$ for $P$ such that the open ``balls'' $B_i = \varphi_i^{-1}(\{x \in \R^n: \lvert x\rvert < 1\})$ cover $\overline{V}$. For each $i$, let $\tau_i: E \mathbin| W_i \simto W_i \times \mathbb{E}$ be a $C^\infty$ trivialization for $E$ over the domain of $\varphi_i$, where $\mathbb{E}$ is a finite-dimensional vector space, the same for all $i$, which we suppose \emph{normed,} $\lvert\blank\rvert: \mathbb{E} \to \R_{\geq 0}$ being the norm on $\mathbb{E}$. For each $C^\infty$ cross-section $\xi \in \Gamma^\infty(P;E)$, we let $\xi^{\tau_i,\varphi_i} = \pr \circ \tau_i \circ (\xi \mathbin| W_i) \circ \varphi_i^{-1}: \R^n \to \mathbb{E}$ denote the local representation of $\xi$ with respect to $\tau_i$,\ $\varphi_i$, where $\pr$ is the projection on $\mathbb{E}$. For each $n$-tuple of integers~$\geq 0$, $p = (p_1,\dotsc,p_n)$, we write $\Der^p = \Der_1^{p_1} \dotsm \Der_n^{p_n}$, where $\Der_j$ is the partial derivative with respect to the $j$-th variable in $\R^n$, so that $\Der^p\xi^{\tau_i,\varphi_i}$ is a vector-valued function $\R^n \to \mathbb{E}$; we also write $\lvert p\rvert = p_1 + \dotsb + p_n$. Then, for each integer $k \geq 0$, the expression
\begin{equation}
	\lVert\xi\rVert_{C^k\overline{V};\{\tau_i,\varphi_i\}} = \max_i{}\max_{\substack{p\in\N^n\\ \lvert p\rvert\leq k}}{}\sup_{z\in B_i\cap V}{}\bigl\lvert\Der^p\xi^{\tau_i,\varphi_i}\bigl(\varphi_i(z)\bigr)\bigr\rvert
\label{eqn:standard}
\end{equation}
defines what we shall call a \emph{standard\/ $C^k\overline{V}$~seminorm} $\lVert\blank\rVert_{C^k\overline{V};\{\tau_i,\varphi_i\}}$ on the vector space $\Gamma^\infty(P;E)$. The \emph{$C^\infty$-topology} on $\Gamma^\infty(P;E)$ is the locally convex topology generated by all these seminorms as we let $k$ vary over all integers~$\geq 0$ and $V$ over all relatively compact open subsets of $P$. A sequence in $\Gamma^\infty(P;E)$ is Cauchy for the $C^\infty$-topology if, for every $V$ from among those belonging to a predefined open cover by relatively compact subsets of $P$, and for every integer $k \geq 0$, it is Cauchy in the usual sense relative to every standard $C^k\overline{V}$~seminorm. Each Cauchy sequence in $\Gamma^\infty(P;E)$ is convergent for the $C^\infty$-topology---hence, a fortiori, pointwise---towards a unique limiting cross-section of class $C^\infty$. The locally convex space $\Gamma^\infty(P;E)$ is thus \emph{complete,} as well as obviously \emph{metrizable:} it is a \emph{Fré\-chet space.}

It is easy to see that any two standard  $C^k\overline{V}$~seminorms, for the same $k$ and $V$ but for two different choices of $\{\tau_i,\varphi_i\}$, are \emph{equivalent.} This fundamental remark justifies the introduction of the following notational device, which will spare us the nuisance of keeping track of irrelevant scaling factors throughout. Let $\mathcal{S}$ be an arbitrary set. We define a binary relation $\preceq$ on the space of all nonnegative real-valued functions $a$,~$a': \mathcal{S} \to \R_{\geq 0}$ by declaring $a \preceq a'$ to mean: there exists some positive constant $C$ such that $a(s) \leq Ca'(s)$ for all $s$ in $S$. This binary relation is reflexive and transitive; it thus gives rise to an equivalence relation on the set of all $a$, as well as descending to a partial order, which we still denote by $\preceq$, on the set of all equivalence classes. Note that $a \preceq a'$ implies both $a + b \preceq a' + b$ and $ab \preceq a'b$ for all $b: \mathcal{S} \to \R_{\geq 0}$. Moreover, if $f: \mathcal{S}' \to \mathcal{S}$ is any set-theoretic map, then $c \preceq c'$ implies $c \circ f \preceq c' \circ f$. Hence the operations of sum, product, and pullback make sense for classes of functions, and behave as expected. Back to our standard seminorms, we shall write $\lVert\blank\rVert_{C^k\overline{V}}$ for the class of any standard $C^k\overline{V}$~seminorm $\lVert\blank\rVert_{C^k\overline{V};\{\tau_i,\varphi_i\}}$ regarded as a nonnegative real-valued function on $\mathcal{S} = \Gamma^\infty(P;E)$.

We shall need the following elementary properties of our function classes $\lVert\blank\rVert_{C^k\overline{V}}$. First of all, for any morphism $\omega: E \to F$ of smooth vector bundles over $P$, we have
\begin{gather}
	\lVert\omega \circ \xi\rVert_{C^k\overline{V}} \preceq \lVert\xi\rVert_{C^k\overline{V}},
\label{lem:12B.13.9}
\end{gather}
where the variable $\xi$ ranges over $\mathcal{S} = \Gamma^\infty(P;E)$. Second, for all smooth vector bundles $E_1$,\ $E_2$,\ and $F$ over $P$ and all $C^\infty$ bilinear forms $\omega: E_1 \times E_2 \to F$, we have%
\begin{subequations}
\begin{gather}
	\lVert\omega \circ (\xi_1,\xi_2)\rVert_{C^k\overline{V}} \preceq \lVert\xi_1\rVert_{C^k\overline{V}}\lVert\xi_2\rVert_{C^k\overline{V}},
\label{eqn:12B.13.5a}\\
	\lVert\omega \circ (\xi_1,\xi_2)\rVert_{C^{k+1}\overline{V}} \preceq \lVert\xi_1\rVert_{C^k\overline{V}}\lVert\xi_2\rVert_{C^{k+1}\overline{V}} + \lVert\xi_1\rVert_{C^{k+1}\overline{V}}\lVert\xi_2\rVert_{C^k\overline{V}},
\label{eqn:12B.13.5b}
\end{gather}
\end{subequations}
where the variable $\xi_i$ ($i = 1$,~$2$) ranges over $\Gamma^\infty(P;E_i)$; the second inequality implies the first for $k \geq 1$ and essentially arises from the Leibniz rule for the derivative of a product. Third, given any smooth mapping $f: P' \to P$ and any smooth vector bundle $E$ over $P$, if we write $f^*\xi$ for the pullback of a $C^\infty$ cross-section $\xi$ of $E$ (i.e.~the unique cross-section of the pullback vector bundle $f^*E$ satisfying $\pr \circ f^*\xi = \xi \circ f$, where $\pr$ denotes the projection $f^*E = P' \times_P E \to E$) then for all relatively compact open subsets $V'$ of $P'$ such that $f(V') \subset V$
\begin{equation}
	\lVert f^*\xi\rVert_{C^k\overline{V'}} \preceq \lVert\xi\rVert_{C^k\overline{V}}.
\label{eqn:12B.13.15}
\end{equation}
Finally, given any two smooth vector bundles $E$ and $F$ over $P$, if for $\mathcal{S}$ we take the subset of $\Gamma^\infty\bigl(P;L(E,F)\bigr)$ consisting of all vector bundle isomorphisms $\lambda: E \simto F$, then
\begin{equation}
	\lVert\lambda^{-1}\rVert_{C^{k+1}\overline{V}} \preceq \lVert\lambda^{-1}\rVert_{C^k\overline{V}}^2\lVert\lambda\rVert_{C^{k+1}\overline{V}}.
\label{eqn:12B.13.9}
\end{equation}

Besides the above elementary inequalities, we shall need yet another inequality of the same sort concerning Haar integrals depending on parameters. So, let the situation and the notations be as in our discussion of these integrals at the beginning of section \ref{sec:averaging}. For technical reasons we shall suppose that our Haar system $\mu$ is one of those arising in the standard way (i.e.~as specified in loc.~cit.)\ out of a left invariant metric of class $C^\infty$. Let $c$ be the normalizing function for $\mu$. It will be convenient to write
\begin{equation*}
	\der\mu_c: \Gamma^\infty(P \ftimes{f}{t} \varGamma;\pr^*E) \longto \Gamma^\infty(P;E), \quad
	\vartheta \mapsto \langle\vartheta,\der\mu_c\rangle
\end{equation*}
for the Haar integration functional with parameters in $P$ and coefficients in $E$ defined by \eqref{eqn:12B.10.6a}; it will always be clear from the context what the parameters and the coefficients are, so there is no need to include them in the notation. Let $U$ be any relatively compact open subset of $M$ such that $\supp c \cap \varGamma U \subset U$. Let $V$ be a similar subset of $P$ such that $f(V) \subset U$. Because of properness, $\varOmega = s^{-1}(U) \cap t^{-1}(U)$ is a relatively compact open subset of $\varGamma$. One can show that the following inequality of function classes in the variable $\vartheta$ holds for every integer $k \geq 0$.
\begin{equation}
	\lVert\langle\vartheta,\der\mu_c\rangle\rVert_{C^k\overline{V}} \preceq \lVert\vartheta\rVert_{C^k\overline{V\ftimes{f}{t}\varOmega}}
\label{eqn:12B.20.1}
\end{equation}
Modulo technicalities, its proof is a simple differentiation under the integral sign. In practice, we shall always either have $V = \varOmega$ and $V \ftimes{f}{t} \varOmega = \varOmega \ftimes{s}{t} \varOmega$, or else have $V = \varOmega \ftimes{s}{t} \varOmega$ and $V \ftimes{f}{t} \varOmega = \varOmega \ftimes{s}{t} \varOmega \ftimes{s}{t} \varOmega$. As a matter of notation, we shall find it convenient to let $\varOmega_2$ and $\varOmega_3$ denote the latter two sets and to write $\varGamma_2 = \varGamma \ftimes{s}{t} \varGamma$, $\varGamma_3 = \varGamma \ftimes{s}{t} \varGamma \ftimes{s}{t} \varGamma$. We shall also find it convenient to let $d_0$,~$d_2: \varGamma_2 \to \varGamma$ denote the two projections and $d_1$,~$d_3: \varGamma_3 \to \varGamma_2$ the two maps sending each composable triplet $(g,h,k)$ to $(g,hk)$, respectively, $(h,k)$; of course this is just standard notation for the face maps of the nerve of a groupoid.

After these preliminaries, we are in a position to start the actual proof of our fast convergence theorem. We shall deal with Theorem \ref{thm:12B.14.2} first.

Let $E$ be a smooth vector bundle over the base $M$ of our proper Lie groupoid $\varGamma \tto M$. For every pseudo-representation $\lambda: s^*E \to t^*E$ of $\varGamma \tto M$ on $E$ viewed as a $C^\infty$ cross-section of the vector bundle $L(s^*E,t^*E) \to \varGamma$, we shall by analogy with \eqref{eqn:basic} write \[%
	R^\lambda \in \Gamma^\infty\bigl(\varGamma_2;L(s^*E,t^*E)\bigr), \quad
	R^\lambda(g,h) = \lambda_{gh} - \lambda_g\lambda_h.
\] Let $\mathcal{S}$ be the subset of $\Gamma^\infty\bigl(\varGamma;L(s^*E,t^*E)\bigr)$ formed by all invertible pseudo-representations $\lambda: s^*E \simto t^*E$. In the notations introduced after \eqref{eqn:12B.20.1}, we have the following inequalities of function classes in the variable $\lambda \in \mathcal{S}$:
\begin{alignat*}{2}
 \lVert\hat{\lambda} - \lambda\rVert_{C^{k+1}\overline{\varOmega}} &
	= \lVert\langle R^\lambda \circ d_2^*\lambda^{-1},\der\mu_c\rangle\rVert_{C^{k+1}\overline{\varOmega}} &\quad &
		\text{by \eqref{eqn:12B.12.5a}}\\ &
	\preceq \lVert R^\lambda \circ d_2^*\lambda^{-1}\rVert_{C^{k+1}\overline{\varOmega_2}} &\quad &
		\text{by \eqref{eqn:12B.20.1}}\\ &
	\preceq \lVert R^\lambda\rVert_{C^k\overline{\varOmega_2}}\lVert d_2^*\lambda^{-1}\rVert_{C^{k+1}\overline{\varOmega_2}}
	     + \lVert R^\lambda\rVert_{C^{k+1}\overline{\varOmega_2}}\lVert d_2^*\lambda^{-1}\rVert_{C^k\overline{\varOmega_2}} &\quad &
		\text{by \eqref{eqn:12B.13.5b}}\\ &
	\preceq \lVert R^\lambda\rVert_{C^k\overline{\varOmega_2}}\lVert\lambda^{-1}\rVert_{C^{k+1}\overline{\varOmega}}
	     + \lVert R^\lambda\rVert_{C^{k+1}\overline{\varOmega_2}}\lVert\lambda^{-1}\rVert_{C^k\overline{\varOmega}} &\quad &
		\text{by \eqref{eqn:12B.13.15}}
\end{alignat*}
whence on account of \eqref{eqn:12B.13.9}%
\begin{subequations}
\begin{equation}
 \lVert\hat{\lambda} - \lambda\rVert_{C^{k+1}\overline{\varOmega}}
	\preceq \{\lVert\lambda^{-1}\rVert_{C^k\overline{\varOmega}}\lVert\lambda\rVert_{C^{k+1}\overline{\varOmega}}\lVert R^\lambda\rVert_{C^k\overline{\varOmega_2}}
	          + \lVert R^\lambda\rVert_{C^{k+1}\overline{\varOmega_2}}\}\lVert\lambda^{-1}\rVert_{C^k\overline{\varOmega}}.
\label{eqn:12B.14.10}
\end{equation}
On the other hand
\begin{alignat*}{2}
 \lVert R^{\hat{\lambda}}\rVert_{C^{k+1}\overline{\varOmega_2}} &
	\preceq \lVert\langle d_1^*(R^\lambda \circ d_2^*\lambda^{-1}) \circ d_3^*(R^\lambda \circ d_2^*\lambda^{-1}),\der\mu_c\rangle\rVert_{C^{k+1}\overline{\varOmega_2}} \\* &\justify
	        + \lVert d_0^*\langle R^\lambda \circ d_2^*\lambda^{-1},\der\mu_c\rangle \circ d_2^*\langle R^\lambda \circ d_2^*\lambda^{-1},\der\mu_c\rangle\rVert_{C^{k+1}\overline{\varOmega_2}} &\quad &
		\text{by \eqref{eqn:12B.12.5b}} \\ &\hskip-4em
	\preceq \lVert d_1^*(R^\lambda \circ d_2^*\lambda^{-1}) \circ d_3^*(R^\lambda \circ d_2^*\lambda^{-1})\rVert_{C^{k+1}\overline{\varOmega_3}} &\quad &
		\text{by \eqref{eqn:12B.20.1}} \\* &\hskip-4em\justify
	        + \lVert d_0^*\langle\mathellipsis\rangle\rVert_{C^k\overline{\varOmega_2}}\lVert d_2^*\langle\mathellipsis\rangle\rVert_{C^{k+1}\overline{\varOmega_2}} + \lVert d_0^*\langle\mathellipsis\rangle\rVert_{C^{k+1}\overline{\varOmega_2}}\lVert d_2^*\langle\mathellipsis\rangle\rVert_{C^k\overline{\varOmega_2}} &\quad &
		\text{by \eqref{eqn:12B.13.5b}} \\ &\hskip-4em
	\preceq \lVert d_1^*(\mathellipsis)\rVert_{C^k\overline{\varOmega_3}}\lVert d_3^*(\mathellipsis)\rVert_{C^{k+1}\overline{\varOmega_3}} + \lVert d_1^*(\mathellipsis)\rVert_{C^{k+1}\overline{\varOmega_3}}\lVert d_3^*(\mathellipsis)\rVert_{C^k\overline{\varOmega_3}} &\quad &
		\text{by \eqref{eqn:12B.13.5b}} \\* &\hskip-4em\justify
	     + \lVert\langle R^\lambda \circ d_2^*\lambda^{-1},\der\mu_c\rangle\rVert_{C^k\overline{\varOmega}}\lVert\langle R^\lambda \circ d_2^*\lambda^{-1},\der\mu_c\rangle\rVert_{C^{k+1}\overline{\varOmega}} &\quad &
		\text{by \eqref{eqn:12B.13.15}} \\ &\hskip-4em
	\preceq \lVert R^\lambda \circ d_2^*\lambda^{-1}\rVert_{C^k\overline{\varOmega_2}}\lVert R^\lambda \circ d_2^*\lambda^{-1}\rVert_{C^{k+1}\overline{\varOmega_2}}
\end{alignat*}
whence again on account of \eqref{eqn:12B.13.9} we find the following estimate in which the expression within braces is exactly the same as in the above inequality \eqref{eqn:12B.14.10}.
\begin{equation}
 \lVert R^{\hat{\lambda}}\rVert_{C^{k+1}\overline{\varOmega_2}}
	\preceq \{\lVert\lambda^{-1}\rVert_{C^k\overline{\varOmega}}\lVert\lambda\rVert_{C^{k+1}\overline{\varOmega}}\lVert R^\lambda\rVert_{C^k\overline{\varOmega_2}}
	          + \mathellipsis\}(\lVert\lambda^{-1}\rVert_{C^k\overline{\varOmega}})^2\lVert R^\lambda\rVert_{C^k\overline{\varOmega_2}}
\label{eqn:12B.14.15}
\end{equation}
\end{subequations}

Let us be given an arbitrary \emph{near representation} $\lambda$ now, which we shall henceforth regard as fixed, as in the statement of Theorem \ref{thm:12B.14.2}. On the basis of our general estimates \eqref{eqn:12B.14.10} and \eqref{eqn:12B.14.15}, we are going to show that for every integer $k \geq 0$ and for some number $0 \leq \epsilon < 1$ independent of $k$ (say, $\epsilon = 2/3$) the four statements S1–S4 hereafter hold:
\begin{description}
 \item[S1.]\em $\lVert\hat{\lambda}^i\rVert_{C^k\overline{\varOmega}} \preceq 1$, in other words, $i \mapsto \lVert\hat{\lambda}^i\rVert_{C^k\overline{\varOmega}}$ is bounded as a function class on\/ $\N$.\em
 \item[S2.]\em $\lVert(\hat{\lambda}^i)^{-1}\rVert_{C^k\overline{\varOmega}} \preceq 1$, in other words, $i \mapsto \lVert(\hat{\lambda}^i)^{-1}\rVert_{C^k\overline{\varOmega}}$ is bounded as a function class on\/ $\N$.\em
 \item[S3.]\em The inequality\/ $\lVert R^{\hat{\lambda}^{i+1}}\rVert_{C^k\overline{\varOmega_2}} \preceq (\lVert R^{\hat{\lambda}^i}\rVert_{C^k\overline{\varOmega_2}})^2$ of function classes in\/ $i \in \N$ holds.\em
 \item[S4.]\em The inequality\/ $\lVert R^{\hat{\lambda}^i}\rVert_{C^k\overline{\varOmega_2}} \preceq \epsilon^{2^i}$ of function classes in\/ $i \in \N$ holds.\em
\end{description}
The proof will proceed by induction on $k$. Once established, the validity of these four statements will enable us to conclude that for every order of derivation $k$ the sequence $\hat{\lambda}^i$ is Cauchy within $\Gamma^\infty\bigl(\varGamma;L(s^*E,t^*E)\bigr)$ relative to any standard $C^k\overline{\varOmega}$~seminorm, thus proving our theorem: indeed, on the basis of the first two steps in the computation leading to \eqref{eqn:12B.14.10}, and on the basis of \eqref{eqn:12B.13.5a}, S2, and S4, we will then have
\begin{align*}
 \lVert\hat{\lambda}^{i+1} - \hat{\lambda}^i\rVert_{C^k\overline{\varOmega}}
	\preceq \lVert R^{\hat{\lambda}^i}\rVert_{C^k\overline{\varOmega_2}}\lVert(\hat{\lambda}^i)^{-1}\rVert_{C^k\overline{\varOmega}}
	\preceq \epsilon^{2^i},
\end{align*}
so that our sequence will be Cauchy relative to the $C^\infty$-topology, and its pointwise limit $\hat{\lambda}^\infty$ will be $C^\infty$-differentiable, because the open sets $\varOmega$ of the form $s^{-1}(U) \cap t^{-1}(U)$ for $U$ as specified in the text preceding \eqref{eqn:12B.20.1} cover the whole $\varGamma$.

\paragraph*{Base case.} We begin by establishing the validity of our statements S1 to S4 for $k = 0$; in the course of their proof, we shall assign a definite numerical value to the parameter $\epsilon$.

If we endow $E$ with a vector bundle metric and set $b_U(\lambda) = \sup_{g\in \varOmega}{}\lVert\lambda_g\rVert$, we get a seminorm $b_U$ on $\Gamma^\infty\bigl(\varGamma;L(s^*E,t^*E)\bigr)$ which is of course a standard $C^0\overline{\varOmega}$~seminorm. The function $i \mapsto b_U(\hat{\lambda}^i)$ is then a representative for the function class $i \mapsto \lVert\hat{\lambda}^i\rVert_{C^0\overline{\varOmega}}$; ditto for $i \mapsto b_U\bigl((\hat{\lambda}^i)^{-1}\bigr)$ in relation to $i \mapsto \lVert(\hat{\lambda}^i)^{-1}\rVert_{C^0\overline{\varOmega}}$. Similarly, if we define $r_U(\lambda)$ by restricting the supremums in \eqref{eqn:12B.12.8b} to $x \in U$ and to $(g_1,g_2) \in \varOmega_2$, then $i \mapsto r_U(\hat{\lambda}^i)$ is a representative for the function class $i \mapsto \lVert R^{\hat{\lambda}^i}\rVert_{C^0\overline{\varOmega_2}}$.

Now, for $k = 0$ statement S3 follows from the obvious remark that when $(g_1,g_2) \in \varOmega_2$ the inequality \eqref{eqn:12B.12.10b} is still satisfied after replacing $b(\lambda)$, $r(\lambda)$ respectively with $b_U(\lambda)$, $r_U(\lambda)$. As to the other statements, we note that the quantities $b(\lambda)$, $r(\lambda)$ given by \eqref{eqn:12B.12.8} satisfy $b(\lambda) \geq b_U(\lambda)$, $r(\lambda) \geq r_U(\lambda)$; S1 and S4 are then immediate consequences of our next lemma, which, on account of \eqref{eqn:12B.12.9a}, also implies S2.

\begin{lem}\label{lem:12B.12.8} Let\/ $\lambda$ be a near representation (\textup{Definition \ref{defn:12B.14.1}}). Let the quantities\/ $b_i = b(\hat{\lambda}^i)$ and\/ $r_i = r(\hat{\lambda}^i)$ be defined as in\/ \eqref{eqn:12B.12.8} with respect to a vector bundle metric such that\/ \eqref{eqn:12B.14.1} is satisfied. Put\/ $\epsilon = 6b_0^2r_0 \leq 2/3$. Then, the two inequalities below are valid for all\/ $i$.%
\begin{subequations}
\label{eqn:12B.12.12}
\begin{gather}
	r_i \leq \frac{1}{6b_0^2}\epsilon^{2^i}
\label{eqn:12B.12.12a}\\
	\frac{b_i}{1 - r_i} \leq \sqrt{3}b_0
\label{eqn:12B.12.12b}
\end{gather}
\end{subequations} \end{lem}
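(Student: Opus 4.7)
I will prove both inequalities jointly by induction on $i$, using the two recursive estimates
\[
 b_{i+1} \leq \frac{b_i}{1-r_i}, \qquad r_{i+1} \leq 2\left(\frac{b_i}{1-r_i}\right)^{\!2} r_i^2
\]
supplied by Proposition \ref{npar:12B.12.7}. Writing $C_i = b_i/(1-r_i)$, these rearrange as $b_{i+1} \leq C_i$, $r_{i+1} \leq 2C_i^2 r_i^2$, and hence $C_{i+1} \leq C_i/(1-r_{i+1})$.

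Base case $i=0$: the first inequality reduces to the tautology $r_0 = \epsilon/(6b_0^2)$ (by definition of $\epsilon$), and the second, $b_0/(1-r_0) \leq \sqrt{3}\,b_0$, amounts to $r_0 \leq 1 - 1/\sqrt{3}\approx 0.423$, which follows from the near-representation hypothesis $r_0 \leq 1/4$.

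Inductive step: assume both estimates for every $j \leq i$. The bound $r_i \leq \epsilon^{2^i}/(6b_0^2)$ combined with $C_i \leq \sqrt{3}\,b_0$ yields
\[
 r_{i+1} \leq 2\cdot 3 b_0^2 \cdot \frac{\epsilon^{2^{i+1}}}{36\, b_0^4} = \frac{\epsilon^{2^{i+1}}}{6\, b_0^2},
\]
which is the first inequality at step $i+1$. For the second, iterating the recursion $C_{i+1} \leq C_i/(1-r_{i+1})$ (starting from $C_0 = b_0/(1-r_0)$) gives
\[
 C_{i+1} \leq b_0 \prod_{j=0}^{i+1} (1-r_j)^{-1},
\]
so that the task reduces to showing the infinite product $\prod_{j=0}^\infty (1-r_j)^{-1}$ does not exceed $\sqrt{3}$.

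The main obstacle is precisely this quantitative bound on the infinite product. The doubly exponential decay $r_j \leq \epsilon^{2^j}/(6b_0^2)$ with $\epsilon \leq 2/3$ guarantees rapid convergence, but verifying the explicit bound $\sqrt{3}$ requires some care. Taking logarithms and using $-\log(1-r_j) \leq r_j/(1-r_j)$, one must bound $\sum_j r_j/(1-r_j)$ by $\tfrac{1}{2}\log 3 \approx 0.549$. This bound rests on three ingredients: (i) the lower bound $b_0 \geq 3/4$, which follows from the near-representation hypothesis since $\sup_x\lVert\id - \lambda_{1x}\rVert \leq r_0 \leq 1/4$ forces $\lVert\lambda_{1x}\rVert \geq 3/4$; (ii) a direct term-by-term estimation of the first few $r_j$'s, whose contributions dominate the series; and (iii) a geometric-series tail bound for the remaining $j$'s, exploiting $\epsilon^{2^j} \leq \epsilon^{j+1}$ for $j\geq 0$. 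The precise constants $1/4$ and $1/(9b_0^2)$ in Definition \ref{defn:12B.14.1} are calibrated exactly so this estimate goes through.
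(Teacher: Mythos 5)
Your induction has exactly the same skeleton as the paper's proof: the same base case, the same recursion $r_{i+1}\leq 2\bigl(b_i/(1-r_i)\bigr)^2r_i^2$ giving \eqref{eqn:12B.12.12a} at step $i+1$, and the same reduction of \eqref{eqn:12B.12.12b} to the bound $\prod_{j}(1-r_j)^{-1}\leq\sqrt{3}$ via the telescoped inequality \eqref{eqn:12B.12.13}. Up to that point everything you write is correct.

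The gap is that you stop exactly where the lemma's actual content begins: you name the product bound as ``the main obstacle,'' list three ingredients, and then assert that ``the precise constants \ldots are calibrated exactly so this estimate goes through'' without exhibiting the estimate. Since the whole point of the lemma is the explicit constant $\sqrt{3}$, this cannot be left as an assertion --- especially because your ingredient (iii) alone does not suffice: with $b_0=3/4$ (the worst case allowed by your ingredient (i)) and $\epsilon=2/3$, the purely geometric bound $\sum_j\epsilon^{2^j}\leq\sum_j\epsilon^{j+1}=\epsilon/(1-\epsilon)\leq 2$ gives $\sum_j r_j/(1-r_j)\leq\frac43\cdot\frac{8}{27}\cdot 2=\frac{64}{81}\approx 0.79>\frac12\log 3$, so the ``direct term-by-term estimation of the first few $r_j$'s'' of your ingredient (ii) is genuinely load-bearing and has to be written out (it does work: $\epsilon+\epsilon^2+\epsilon^4+\epsilon^8$ plus a tail bounded by $\epsilon^{16}/(1-\epsilon)$ gives $\sum_j\epsilon^{2^j}\leq 1.36$, hence $\sum_j r_j/(1-r_j)\leq\frac43\cdot\frac{8}{27}\cdot 1.36<0.55$). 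For comparison, the paper closes this step differently and more cheaply: it uses the sharper logarithm bound $-\log(1-r)\leq r+r^2$ valid for $0\leq r\leq\frac12$, together with $2^i\geq 2i$, so that $\sum_i(-\log(1-r_i))\leq\frac16(1-\epsilon^2)^{-1}+\frac1{36}(1-\epsilon^4)^{-1}\leq\frac12\leq\log\sqrt{3}$, with no need to treat initial terms separately. One point in your favor: you correctly observe that some lower bound on $b_0$ is needed to control the factor $(6b_0^2)^{-1}$, a point the paper passes over silently (its displayed estimate tacitly treats $(6b_0^2)^{-1}\leq\frac16$; the hypothesis \eqref{eqn:12B.14.1} yields $b_0^2\geq\lVert\lambda_{g^{-1}}\lambda_g\rVert\geq 1-r_0\geq\frac34$, which restores the required slack). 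But as submitted, the decisive inequality is claimed rather than proved.
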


\begin{proof} It is clear that both inequalities hold for $i = 0$, the first one by definition of $\epsilon$, the second one because $1/(1 - r_0) \leq 1/(1 - \frac{1}{4}) = \frac{4}{3} \leq \sqrt{3}$. It will then suffice to show that for every $n \geq 0$ the second inequality must be satisfied for $i = n$ whenever the first is satisfied for all $i$ between zero and $n$: indeed if we can prove this and if we know that \eqref{eqn:12B.12.12a} is satisfied for all $i = 0$,~$\dotsc,$~$n$, then by the unitality of $\hat{\lambda}^{n+1}$ and by \eqref{eqn:12B.12.10b}
\begin{equation*}
 r_{n+1}
	\leq 2\biggl(\frac{b_n}{1 - r_n}\biggr)^2r_n^2
	\leq 2 \cdot 3b_0^2 \cdot \biggl(\frac{1}{6b_0^2}\biggr)^2(\epsilon^{2^n})^2
	= \frac{1}{6b_0^2}\epsilon^{2^{n+1}}.
\end{equation*}

In virtue of the remarks following Definition \ref{defn:12B.14.1} we know $r_i < 1$ for all $i$. In consequence of \eqref{eqn:12B.12.10a} we then have $b_{i+1} \leq b_i/(1 - r_i)$ for all $i$ and, therefore, upon combining these inequalities recursively as $i$ runs from zero to $n - 1$,
\begin{equation}
	b_n/(1 - r_n) \leq b_0/[(1 - r_0) \dotsm (1 - r_n)].
\label{eqn:12B.12.13}
\end{equation}
We proceed to estimate the quantity
\begin{equation*}
\textstyle%
 1\big/\prod_{i=0}^n (1 - r_i)
	= 1\big/\bigl[\exp\log{}\bigl(\prod_{i=0}^n (1 - r_i)\bigr)\bigr]
	= \exp\bigl(-\sum_{i=0}^n \log(1 - r_i)\bigr).
\end{equation*}
For any real number $r$ such that $\lvert r\rvert < 1$,
\begin{align*}
 -\lvert r\rvert + \lvert\log(1 + r)\rvert &
	\leq \lvert r - \log(1 + r)\rvert \\ &
	= \biggl\lvert\frac{r^2}{2} - \frac{r^3}{3} + \frac{r^4}{4} - \dotsb\biggr\rvert
	\leq \frac{\lvert r\rvert^2}{2} + \frac{\lvert r\rvert^3}{2} + \frac{\lvert r\rvert^4}{2} + \dotsb
	= \frac{\lvert r\rvert^2}{2}\frac{1}{1 - \lvert r\rvert};
\end{align*}
this quantity is smaller than $\lvert r\rvert^2$ whenever $\lvert r\rvert$ is~$\leq 1/2$, hence upon substituting $r$ with $-r$ we conclude that
\begin{equation*}
	0 \leq r \leq 1/2
\text{\quad implies}\mathopen{\quad}
	-\log(1 - r) \leq r + r^2.
\end{equation*}
Then, noting that the inequalities $r_i \leq r_0 \leq \frac{1}{4} < \frac{1}{2}$ and $2^i \geq 2i$ hold for all $i$,
\begin{align*}
 \exp\left(\textstyle\sum\limits_{i=0}^n {-\log(1 - r_i)}\right) &
	\leq \exp\left(\textstyle\sum\limits_{i=0}^n r_i + \sum\limits_{i=0}^n r_i^2\right) \\ &
	\leq \exp\left(\frac{1}{6}\textstyle\sum\limits_{i=0}^n \epsilon^{2i}\right)\exp\left(\frac{1}{6^2}\textstyle\sum\limits_{i=0}^n \epsilon^{4i}\right) \\ &
	\leq \exp\left(\frac{1}{6}\frac{1}{1 - \epsilon^2}\right)\exp\left(\frac{1}{6^2}\frac{1}{1 - \epsilon^4}\right) \\ &
	\leq \exp\left(\frac{1}{6}\frac{9}{5} + \frac{1}{6^2}\frac{81}{65}\right)
	\leq \exp\left(1/2\right) \leq \sqrt{3}.
\end{align*}
Now, if we combine this with \eqref{eqn:12B.12.13} we obtain the desired upper bound: $b_n/(1 - r_n) \leq \sqrt{3}b_0$. \end{proof}

\paragraph*{Inductive step.} We proceed to demonstrate that the validity of our statements S1–S4 for a given value of $k \geq 0$ implies their validity for the next higher value $k + 1$ as well.

In view of our inductive hypothesis S2, the inequalities of function classes \eqref{eqn:12B.14.10} and \eqref{eqn:12B.14.15} lead to the estimates below.
\begin{subequations}
\label{eqn:12B.14.20}
\begin{gather}
\begin{aligned}[b]
 \lVert\hat{\lambda}^{i+1}\rVert_{C^{k+1}\overline{\varOmega}} &
	\preceq \lVert\hat{\lambda}^i\rVert_{C^{k+1}\overline{\varOmega}} + \lVert\hat{\lambda}^{i+1} - \hat{\lambda}^i\rVert_{C^{k+1}\overline{\varOmega}} \\ &
	\preceq \lVert\hat{\lambda}^i\rVert_{C^{k+1}\overline{\varOmega}} + \{\lVert\hat{\lambda}^i\rVert_{C^{k+1}\overline{\varOmega}}\lVert R^{\hat{\lambda}^i}\rVert_{C^k\overline{\varOmega_2}} + \lVert R^{\hat{\lambda}^i}\rVert_{C^{k+1}\overline{\varOmega_2}}\}
\end{aligned}
\label{eqn:12B.14.20a}\\
 \lVert R^{\hat{\lambda}^{i+1}}\rVert_{C^{k+1}\overline{\varOmega_2}}
	\preceq \{\lVert\hat{\lambda}^i\rVert_{C^{k+1}\overline{\varOmega}}\lVert R^{\hat{\lambda}^i}\rVert_{C^k\overline{\varOmega_2}}
	           + \lVert R^{\hat{\lambda}^i}\rVert_{C^{k+1}\overline{\varOmega_2}}\}\lVert R^{\hat{\lambda}^i}\rVert_{C^k\overline{\varOmega_2}}
\label{eqn:12B.14.20b}
\end{gather}
\end{subequations}
Let $i \mapsto a_i$ be the function class on $\N$ given by the expression within brackets. Then
\begin{alignat*}{2}
 a_{i+1} &
	\preceq (\lVert\hat{\lambda}^i\rVert_{C^{k+1}\overline{\varOmega}} + a_i)\lVert R^{\hat{\lambda}^{i+1}}\rVert_{C^k\overline{\varOmega_2}}
	        + a_i\lVert R^{\hat{\lambda}^i}\rVert_{C^k\overline{\varOmega_2}} &\quad &
		\text{by \eqref{eqn:12B.14.20a} and \eqref{eqn:12B.14.20b}} \\ &
	\preceq (\lVert\hat{\lambda}^i\rVert_{C^{k+1}\overline{\varOmega}}^{\vphantom{2}} + a_i)\lVert R^{\hat{\lambda}^i}\rVert_{C^k\overline{\varOmega_2}}^2
	        + a_i\lVert R^{\hat{\lambda}^i}\rVert_{C^k\overline{\varOmega_2}}^{\vphantom{2}} &\quad &
		\text{by our inductive hypothesis S3} \\ &
	\preceq \lVert\hat{\lambda}^i\rVert_{C^{k+1}\overline{\varOmega}}^{\vphantom{2}}\lVert R^{\hat{\lambda}^i}\rVert_{C^k\overline{\varOmega_2}}^2
	        + a_i\lVert R^{\hat{\lambda}^i}\rVert_{C^k\overline{\varOmega_2}}^{\vphantom{2}}(\epsilon^{2^i} + 1) &\quad &
		\text{by our inductive hypothesis S4} \\ &
	\preceq a_i\lVert R^{\hat{\lambda}^i}\rVert_{C^k\overline{\varOmega_2}}
	        + a_i\lVert R^{\hat{\lambda}^i}\rVert_{C^k\overline{\varOmega_2}} &\quad &
		\text{a fortiori.}
\end{alignat*}
It follows from S4 that $a_{i+1} \preceq a_i\epsilon^{2^i}$ as function classes on $\N$. Since $C\epsilon^{2^i}$ tends to zero for each constant $C > 0$ as $i$ goes to infinity, we see that $i \mapsto a_i$ is eventually decreasing, hence bounded, and thus that $a_{i+1} \preceq \epsilon^{2^i}$, equivalently, $a_i \preceq \epsilon^{2^{i-1}}$ as function classes on $\N$.

We notice next that if $f: \N \to \R_{\geq 0}$ is any actual function representing a function class $F$, the class of the “partial sums” function $i \mapsto \sum_{j=0}^i f(j)$ only depends on $F$, not on the choice of representative $f$. We write $\sum F$ for this “partial sums” function class. Let us now take $F$ to be the function class $i \mapsto \lVert\hat{\lambda}^{i+1} - \hat{\lambda}^i\rVert_{C^{k+1}\overline{\varOmega}}$. Let $F'$ be the function class $i \mapsto \lVert\hat{\lambda}^{i+1} - \lambda\rVert_{C^{k+1}\overline{\varOmega}}$. We obviously have $F' \preceq \sum F \preceq \sum \epsilon^{2^{i-1}} \preceq 1$ because by \eqref{eqn:12B.14.10} and S2 $F_i \preceq a_i$. The boundedness of $i \mapsto \lVert\hat{\lambda}^{i+1}\rVert_{C^{k+1}\overline{\varOmega}}$ then follows. Our first inductive claim S1 is proven.

Our second inductive claim S2 follows from the already proven S1 on account of \eqref{eqn:12B.13.9} and of our inductive hypothesis S2.

As to our next inductive claim S3, by \eqref{eqn:12B.14.20b} and the already proven S1, we have
\begin{align*}
 \lVert R^{\hat{\lambda}^{i+1}}\rVert_{C^{k+1}\overline{\varOmega_2}} &
	\preceq \{\lVert R^{\hat{\lambda}^i}\rVert_{C^k\overline{\varOmega_2}}
	           + \lVert R^{\hat{\lambda}^i}\rVert_{C^{k+1}\overline{\varOmega_2}}\}\lVert R^{\hat{\lambda}^i}\rVert_{C^k\overline{\varOmega_2}} \\ &
	\preceq \{\lVert R^{\hat{\lambda}^i}\rVert_{C^{k+1}\overline{\varOmega_2}}
	           + \lVert R^{\hat{\lambda}^i}\rVert_{C^{k+1}\overline{\varOmega_2}}\}\lVert R^{\hat{\lambda}^i}\rVert_{C^{k+1}\overline{\varOmega_2}}
	= \lVert R^{\hat{\lambda}^i}\rVert_{C^{k+1}\overline{\varOmega_2}}^2.
\end{align*}

Finally, from \eqref{eqn:12B.14.20b} and our inductive hypothesis S4, we deduce that since $a_i \preceq \epsilon^{2^{i-1}} \preceq 1$
\begin{align*}
 \lVert R^{\hat{\lambda}^{i+1}}\rVert_{C^{k+1}\overline{\varOmega_2}} &
	\preceq a_i\lVert R^{\hat{\lambda}^i}\rVert_{C^k\overline{\varOmega_2}}
	\preceq \lVert R^{\hat{\lambda}^i}\rVert_{C^k\overline{\varOmega_2}}
	\preceq \epsilon^{2^i}.
\end{align*}

\subsubsection*{The case of connections}

The proof of Theorem \ref{thm:12B.15.1}, which we now turn our attention to, is essentially a byproduct of the proof of Theorem \ref{thm:12B.14.2} we have just concluded. Our notations, in what follows, will be as in the text preceding the statement of Theorem \ref{thm:12B.15.1}.

Let $\mathcal{S}$ be the set of all nondegenerate connections on the proper Lie groupoid $\varGamma \tto M$. For each $H$ in $\mathcal{S}$, we regard $R^H$ as an element of $\Gamma^\infty\bigl(\varGamma_2;L(s^*\T M,t^*\Lie\varGamma)\bigr)$. From the identity \eqref{eqn:12B.15.5b}, by making repeated use of the elementary estimates \eqref{lem:12B.13.9}–\eqref{eqn:12B.20.1}, we obtain the following inequality of function classes in the variable $H \in \mathcal{S}$, where for the sake of readability we write $\lambda$ instead of $\lambda^H$:
\begin{align*}
 \lVert R^{\hat{H}}\rVert_{C^k\overline{\varOmega_2}} &
	\preceq \lVert\langle d_1^*(R^H \circ d_2^*\lambda^{-1}) \circ d_3^*(R^\lambda \circ d_2^*\lambda^{-1}),\der\mu_c\rangle\rVert_{C^k\overline{\varOmega_2}} \\* &\justify
	        + \lVert d_0^*\langle R^H \circ d_2^*\lambda^{-1},\der\mu_c\rangle \circ d_2^*\langle R^\lambda \circ d_2^*\lambda^{-1},\der\mu_c\rangle\rVert_{C^k\overline{\varOmega_2}} \\ &
	\preceq \lVert d_1^*(R^H \circ d_2^*\lambda^{-1})\rVert_{C^k\overline{\varOmega_3}}\lVert d_3^*(R^\lambda \circ d_2^*\lambda^{-1})\rVert_{C^k\overline{\varOmega_3}} \\* &\justify
	        + \lVert\langle R^H \circ d_2^*\lambda^{-1},\der\mu_c\rangle\rVert_{C^k\overline{\varOmega}}\lVert\langle R^\lambda \circ d_2^*\lambda^{-1},\der\mu_c\rangle\rVert_{C^k\overline{\varOmega}} \\ &
	\preceq \lVert R^H \circ d_2^*\lambda^{-1}\rVert_{C^k\overline{\varOmega_2}}\lVert R^\lambda \circ d_2^*\lambda^{-1}\rVert_{C^k\overline{\varOmega_2}} \\ &
	\preceq \lVert R^H\rVert_{C^k\overline{\varOmega_2}}\lVert\lambda^{-1}\rVert_{C^k\overline{\varOmega}}\lVert R^\lambda\rVert_{C^k\overline{\varOmega_2}}\lVert\lambda^{-1}\rVert_{C^k\overline{\varOmega}}.
\end{align*}

Suppose that for a given $H$, which we shall keep fixed, $\lambda = \lambda^H$ is a near representation. Our last inequality in conjunction with S2 and S4 above entails that $\lVert R^{\hat{H}^{i+1}}\rVert_{C^k\overline{\varOmega_2}} \preceq \epsilon^{2^i}\lVert R^{\hat{H}^i}\rVert_{C^k\overline{\varOmega_2}}$ as function classes in $i \in \N$. By the argument we used earlier in our “inductive step” to derive the estimate $a_i \preceq \epsilon^{2^{i-1}}$, this entails in turn that $\lVert R^{\hat{H}^i}\rVert_{C^k\overline{\varOmega_2}} \preceq \epsilon^{2^{i-1}}$ and, therefore,
\begin{alignat*}{2}
 \lVert\eta^{\hat{H}^{i+1}} - \eta^{\hat{H}^i}\rVert_{C^k\overline{\varOmega}} &
	= \lVert\omega \circ (\eta^{\hat{H}^{i+1}} - \eta^{\hat{H}^i})\rVert_{C^k\overline{\varOmega}} &\quad &
		\text{by \eqref{lem:12B.13.9}} \\ &
	= \lVert\langle R^{\hat{H}^i} \circ d_2^*(\hat{\lambda}^i)^{-1},\der\mu_c\rangle\rVert_{C^k\overline{\varOmega}} &\quad &
		\text{by \eqref{eqn:12B.15.5a*}} \\ &
	\preceq \lVert R^{\hat{H}^i}\rVert_{C^k\overline{\varOmega_2}}\lVert(\hat{\lambda}^i)^{-1}\rVert_{C^k\overline{\varOmega}} &\quad &
		\text{by \eqref{eqn:12B.20.1} and \eqref{eqn:12B.13.5a}} \\ &
	\preceq \epsilon^{2^{i-1}} &\quad &
		\text{again by S2.}
\end{alignat*}
It follows immediately that the sequence $\eta^{\hat{H}^i}$ is Cauchy for the $C^\infty$-topology and, hence, that its pointwise limit $\eta^{\hat{H}^\infty}$ is $C^\infty$-differentiable, as was to be shown.

% references %%%%%%%%%%%%%%%%%%%%%%%%%%%%%%%%%%%%
{\footnotesize
\newcommand{\mdash}{-}% obsolete 2016a.bib
\bibliographystyle{abbrv}
\bibliography{revision,/home/gtrentin/preprints/bib/gtrentin,/home/gtrentin/preprints/bib/2016a}
}%
\end{document}